\theoremstyle{plain}
\newtheorem{theorem}{\bf Theorem}[]
\newtheorem{corollary}[theorem]{Corollary}
\newtheorem{lemma}[theorem]{Lemma}
\newtheorem{proposition}[theorem]{Proposition}
\newtheorem{thmnonumber}{\bf Main Theorem}
\theoremstyle{definition}
\newtheorem{example}[theorem]{Example}
\newtheorem{nonexample}[theorem]{Non-Example}
\newtheorem{remark}[theorem]{Remark}
\newtheorem{definition}[theorem]{Definition}
\newcommand{\NN}{\mathbb{N}}
\newcommand{\ZZ}{\mathbb{Z}}
\newcommand{\RR}{\mathbb{R}}
\newcommand{\C}{\mathcal{C}}
\newcommand{\mm}{\mathfrak{m}}
\newcommand{\supp}{\mathrm{supp}}
\newcommand{\init}{\mathrm{in}}
\newcommand{\height}{\mathrm{height}}
\newcommand{\Hom}{\mathrm{Hom}}
\newcommand{\mmu}{{\bm \mu}}
\newcommand\eqdef{\mathrel{\overset{\makebox[0pt]{\mbox{\normalfont\tiny\sffamily def}}}{=}}}
\newcommand\iffdef{\mathrel{\overset{\makebox[0pt]{\mbox{\normalfont\tiny\sffamily def}}}{\Longleftrightarrow}}}
\definecolor{mypink}{RGB}{215, 5, 234}
\definecolor{lemonchiffon}{RGB}{255, 250, 205}
\renewcommand{\textdagger}{$**$}
\renewcommand{\textdaggerdbl}{$**$}
\renewcommand{\textparagraph}{$*$}
\begin{document}

\author{Bruno Benedetti \thanks{Supported by NSF Grant  1855165, ``Geometric Combinatorics and Discrete Morse Theory''.}\\
\small Dept. of Mathematics\\
\small University of Miami\\
\small bruno@math.miami.edu
\and Lisa Seccia$\phantom{i}^{**}$\\
\small Dip. di Matematica\\
\small Univ. degli Studi di Genova\\
\small seccia@dima.unige.it
\and Matteo Varbaro \thanks{Supported by 100021 2019 ADS FRA, ``Algebra commutativa: ideali iniziali square-free, singolarit\`a in caratteristica mista e aspetti computazionali".}\\
\small Dip. di Matematica\\
\small Univ.  degli Studi di Genova\\
\small varbaro@dima.unige.it}

\date{April 12, 2021}
\title{Hamiltonian paths, unit-interval complexes, \\and determinantal facet ideals}
\maketitle

\begin{abstract} We study $d$-dimensional generalizations of three mutually related topics in graph theory: Hamiltonian paths, (unit) interval graphs, and binomial edge ideals. 
We provide partial  high-dimensional generalizations of Ore and P\'osa's sufficient conditions for a graph to be Hamiltonian. 
We introduce a hierarchy of combinatorial properties for simplicial complexes that generalize unit-interval, interval, and co-comparability graphs. 
We connect these properties to the already existing notions of determinantal facet ideals and (tight and weak) Hamiltonian paths in simplicial complexes. 
Some important consequences of our work are:
\begin{compactenum}[(1)]
\item Every unit-interval strongly-connected $d$-dimensional simplicial complex is traceable. \\
(This extends the well-known result ``unit-interval connected graphs are traceable''.)
\item Every unit-interval $d$-complex that remains strongly connected after the deletion of~$d$ or less vertices, is Hamiltonian. \\
(This extends the fact that  ``unit-interval $2$-connected graphs are Hamiltonian''.)
\item Unit-interval complexes are characterized, among traceable complexes, by the property that the minors defining their determinantal facet ideal form a Gr\"obner basis for a diagonal term order which is compatible with the traceability of the complex.\\(This corrects a recent theorem by Ene et al., extends a result by Herzog and others, and partially answers a question by Almousa--Vandebogert.)
\item Only the $d$-skeleton of the simplex has a determinantal facet ideal with linear resolution. \\
(This extends the result by Kiani and Saeedi-Madani that ``only the complete graph has a binomial edge ideal with linear resolution''.)
\item The determinantal facet ideals of all under-closed and semi-closed complexes have a square-free initial ideal with respect to lex. In characteristic $p$, they are even F-pure.\\ 

\end{compactenum}
\end{abstract}

\vskip2mm

\section*{Introduction}
The first Combinatorics paper in History is apparently Leonhard Euler's 1735 solution of the K\"onigsberg bridge problem. In that article, Euler introduced the notion of graph, and studied cycles (now called `Eulerian') that touch all edges exactly once. Euler proved that the graphs admitting them, are exactly those graphs with all vertices of even degree. \emph{Hamiltonian} cycles are instead cycles that touch all \emph{vertices} exactly once; they are named after sir William Rowan Hamilton, who in 1857 invented a puzzle game which asked to find one such cycle in the icosahedron. 
Unlike for the Eulerian case, figuring out if a graph admits a Hamiltonian cycle or not is a hard problem, now known to be NP-complete \cite{Karp}. 

Even if simple characterizations are off the table, in the 1950s and 1960s Dirac, Ore, P\'osa and others were able to obtain simple conditions on the vertex degrees (in the spirit of Euler's work) that are \emph{sufficient} for a graph to admit Hamiltonian cycles \cite{Dirac, Ore, Posa}. Ore's theorem, for example, says, ``{\it Any graph with $n$ vertices such that 
$\deg u + \deg v \ge n$ for all non-adjacent vertices $u,v$, admits a Hamiltonian cycle}''. Ore's condition is far from being necessary: In any cycle, no matter how large, one has $\deg u + \deg v =4$ for all $u,v$.

In the same years, the two papers \cite{LekBol} and \cite{GilmoreHoffman} initiated the study of \emph{unit-interval graphs}. This very famous class  consists, as the name suggests, of all intersection graphs of a bunch of length-one open intervals  on the real line. (That is, we place a node in the middle of each interval, and we connect two nodes with an arc if and only if the corresponding intervals overlap). Bertossi's theorem says that if they are connected, such graphs always admit \emph{Hamiltonian paths}, i.e. paths that touch all vertices once \cite{Bertossi}. Chen--Chang--Chang's theorem states that $2$-connected unit-interval graphs admit Hamiltonian cycles \cite{ChChCh}. For these results, the length-one request can be weakened to ``pairwise not-nested'', but it cannot be dismissed: Within the larger world of  \emph{interval graphs}, one encounters connected graphs such as $K_{1,3}$  that do not admit Hamiltonian paths, and also $2$-connected graphs like the $G_5$ of Remark~\ref{rem:G5} that do not admit Hamiltonian cycles.

In the 1970s, the work of Stanley and Reisner established a fundamental bridge between Combinatorics and Commutative Algebra, namely, a natural bijection between labeled simplicial complexes on $n$ vertices and radical monomial ideals in a polynomial ring with $n$ variables. This correspondence lead Stanley to prove the famous Upper Bound Theorem for triangulated spheres~\cite{StanleyUBT}. After this success, many authors have investigated  ways to encode graphs into monomial ideals. In 2010, Herzog et al.~\cite{HerzogEtAl} first considered a natural way to encode graphs into binomial ideals, the so-called \emph{binomial edge ideals}. The catch is that all such binomial edge ideals are radical \cite{HerzogEtAl}. In the process, Herzog et al. re-discovered unit-interval graphs, characterizing them as the graphs whose binomial edge ideals have quadratic Gr\"obner bases with respect to a diagonal term order \cite[Theorem 1.1]{HerzogEtAl}. 

So far, we sketched three graph-theoretic topics from three different centuries: Hamiltonian paths, (unit) interval graphs, binomial edge ideals. In the last years, there has been an increasing interest in expanding these three notions to higher dimensions. Specifically:
\begin{compactitem}
\item Katona--Kierstead \cite{KatKie} and many others \cite{HS, Ketal, RSR} have studied  ``tight Hamiltonian paths'' and ``loose Hamiltonian paths'' in $d$-dimensional simplicial complexes; both notions for $d=1$ boil down to ordinary Hamiltonian paths.  The good news is that extremal combinatorics provides a non-trivial way to extend Dirac's theorem for $d$-complexes with a very large number of vertices that satisfy certain ridge-degree conditions. The bad news is that already Ore and P\'osa's theorems seem very hard to extend. 
\item Ene et al. \cite{EneEtAl} introduced  ``determinantal facet ideals'', which directly generalize binomial edge ideals, and ``closed $d$-complexes'', which generalize  `unit-interval graphs'. The good news is that the definitions are rather natural. The bad news is that determinantal facet ideals are not radical in general (see Example \ref{ex:notradical}), and they are hard to manipulate; alas,  the two main results of the paper \cite{EneEtAl} are  incorrect, cf.~Remark \ref{rem:correction}.
\end{compactitem}

\medskip 
In the present paper we take a new, unified look at these approaches. In Chapter 1, we introduce a notion of `weakly-Hamiltonian paths' for $d$-dimensional simplicial complexes that for $d=1$ also boils down to ordinary Hamiltonian paths. This weaker notion enables us to obtain a first, partial extension of Dirac, Ore and P\'osa's theorem to higher dimensions:

\begin{thmnonumber}[Higher-dimensional Ore and Dirac, cf.~Proposition~\ref{prop:Ore1} and Corollary \ref{cor:Dirac}] Let $\Delta$ be any traceable $d$-complex on $n>2d$ vertices. If in some labeling that makes $\Delta$ traceable the two $(d-1)$-faces $\sigma$ and $\tau$ formed by the first $d$ and the last $d$ vertices, respectively, have facet degrees summing up to at least $n$, then $\Delta$ admits a weakly-Hamiltonian cycle.\\
In particular, if in a traceable pure $d$-complex with $n$ vertices, every $(d-1)$-face belongs to  at least $\frac{n}{2}$ facets, then the complex admits a weakly-Hamiltonian cycle.
\end{thmnonumber}

\begin{thmnonumber}[Higher-dimensional P\'osa, cf.~Proposition~\ref{prop:PosaD}] \label{prop:PosaD} Let $\Delta$ be any traceable pure $d$-complex on $n$ vertices, $n > 2d$. Suppose that with any labeling in which $\Delta$ has a weakly-Hamiltonian path, $\Delta$ is traceable.
 Let $\sigma_1, \sigma_2, \ldots, \sigma_{s}$ be the $(d-1)$-faces of  $\Delta^*$, ordered so that $d_1 \le d_2 \le \ldots \le d_{s}$, where $d_i \eqdef d_{\sigma_i}$ is  the number of $d$-faces containing $\sigma_i$. If for every $d \le  k < \frac{n}{2}$ one has  $ d_{k-d+1} >k$,
then $\Delta$ admits a weakly-Hamiltonian cycle.
\end{thmnonumber}

As you can see these results are conditional: `Traceability', i.e. the existence of a \emph{tight} Hamiltonian path, must be known a priori, in order to infer the existence of a \emph{weakly-}Hamiltonian cycle. This sounds like a bad deal, but  in the one-dimensional case our results above still immediately imply the original theorems by Ore and P\'osa for graphs. Moreover, since no extremal combinatorics is used in the proof, there is an advantage: Main Theorems I and II do not require the number of vertices to be extremely large. On the contrary: In the two-dimensional case, they already apply to complexes with five vertices. 

\medskip
In Chapter 2, we introduce a hierarchy of four natural properties that progressively weaken (for strongly-connected complexes) the notion of ``closed $d$-complexes'', as originally proposed in \cite{EneEtAl}. We introduce ``unit-interval'', ``under-closed'', and ``weakly-closed'' complexes, as natural combinatorial higher-dimensional generalizations of unit-interval graphs, of interval graphs, and of co-comparability graphs, respectively. The forth property, called ``semi-closed'', is intermediate between ``under-closed'' and ``weakly-closed''; it is also defined very naturally, but it seems to be new already for graphs. We will see its algebraic consequence in Main Theorem VI below. 
The main goal of Chapter 2 is to connect this hierarchy to the notions of Chapter 1:

\begin{thmnonumber}[Higher-dimensional Bertossi, Theorem \ref{thm:CTSC}] Every unit-interval strongly-connected $d$-dimensional simplicial complex is traceable.
\end{thmnonumber}

\begin{thmnonumber}[Higher-dimensional Chen--Chan--Chang, Theorem  \ref{thm:Hi0}] Every unit-interval $d$-dimensional simplicial complex that remains strongly connected after the deletion of~$d$ or less vertices, however chosen, is Hamiltonian.
\end{thmnonumber}

Finally, Chapter 3 is dedicated to the connection with commutative algebra. For a homogeneous ideal of polynomials, having a square-free Gr\"obner degeneration is a strong and desirable property. In 2020, Conca and the third author proved Herzog's conjecture that if a homogeneous ideal $I$ has a square-free initial ideal $\operatorname{in}(I)$, then the extremal Betti numbers of $I$ and $\operatorname{in}(I)$ are the same \cite{CV}. This allows us to infer the depth, the Castelnuovo--Mumford regularity, and many other invariants of the ideals $I$ with squarefree initial-ideal, simply by computing these invariants on the initial ideal --- which is a much simpler task, because the aforementioned Stanley--Reisner correspondence activates techniques from combinatorial topology. 
Building on  the very recent work of the second author \cite{Se2}, we are able to revise one of the results claimed in Ene et al \cite{EneEtAl} as follows:

\begin{thmnonumber}[Theorem \ref{t:a-c-gb} and \ref{t:a-c-gb1}]
A strongly-connected $d$-dimensional simplicial complex $\Delta$ is unit-interval if and only if the complex is traceable and with respect to the same labeling, the minors defining the determinantal facet ideal of $\Delta$ form a Gr\"obner basis with respect to any diagonal term order.
\end{thmnonumber}

We conclude our work with a result that provides a broad class of determinantal facet ideals that are radical, and even $F$-pure (if the characteristic is positive):

\begin{thmnonumber}[Theorem \ref{t:s-c-f}] The determinantal facet ideals of all semi-closed complexes are radical. Indeed, they have a square-free initial ideal with respect to any diagonal term order. Moreover, in characteristic $p>0$, the quotients by these ideals are all $F$-pure.
\end{thmnonumber}

The proof relies once again on the recent work by the second author \cite{Se2}. Since all shifted complexes are under-closed, and in particular semi-closed, Theorem \ref{t:s-c-f} immediately implies that the determinantal facet ideals of shifted complexes admit a square-free Gr\"obner degeneration and, in positive characteristic, define $F$-pure rings. 
As a consequence of Main Theorem VI, we can extend to all dimensions the result by Kiani and Saeedi-Madani that ``among all graphs, only complete graphs have a binomial edge ideal with a linear resolution'' \cite{SMK}.
Namely, we prove that among all $d$-dimensional simplicial complexes with $n$ vertices, only the $d$-skeleta of simplices have a determinantal facet ideal with a linear resolution (Corollary \ref{cor:SMK}).

\section*{Notation}
Throughout $d, n$ are positive integers, with $d < n$. We denote by $\Sigma^d$ the $d$-simplex, and by $\Sigma^d_n$ the $d$-skeleton of $\Sigma^{n-1}$. 
We write each face of  $\Sigma^d_n$ by listing its vertices in increasing order. We describe simplicial complexes by listing their facets in any order, e.g. $ \Delta = 123, 124, 235$. For any $d$-face $F=a_0a_1\cdots a_d$ of $\Sigma_n^d$, we call \emph{gap of $F$} the integer $\operatorname{gap}(F) \eqdef a_d - a_0 - d$, which counts the integers $i$ strictly between $a_0$ and $a_d$ that are not present in $F$.
For each $i$ in $\{1, 2, \ldots, n-d\}$, we call $H_i$ the $d$-face of $\Sigma_n^d$ with vertices $i, i+1, \ldots, i+d$. Clearly, $H_1, H_2, \ldots, H_{n-d}$ are exactly those faces of $\Sigma_n^d$ that have gap zero. With abuse of notation, we extend the definition of $H_i$ also to $i \in \{n-d+1, \ldots, n\}$ using ``congruence modulo $n$''. Namely, by ``$n+1$'' we mean vertex $1$, by ``$n+2$'' we mean vertex $2$, and so on. So $H_n$ will be the $d$-face adjacent to $H_1$ and of vertices $\{n, 1, 2, 3, \cdots, d\}$, which we write down in increasing order, so $H_n =123\cdots d \, n$. Note that $\operatorname{gap}(H_i)>0$ when $i>n-d$.

\begin{definition}[traceable, Hamiltonian] \label{def:1}
A  complex $\Delta$ is \emph{(tight-) traceable} if it has a labeling such that $H_1, \ldots, H_{n-d}$ are in $\Delta$. 
It is  \emph{(tight-) Hamiltonian} if it has a labeling such that all of $H_1, \ldots, H_n$ are in $\Delta$. 
\end{definition}

Clearly, Hamiltonian implies traceable. For $d=1$, Definition~\ref{def:1} boils down to the classical notions of traceable and Hamiltonian graphs, that is, graphs that admits a Hamiltonian path and a Hamiltonian cycle, respectively. In fact, nobody prevents us from relabeling the vertices in the order in which we encounter them along such path (or cycle).

Recall that two facets of a pure simplicial $d$-complex are \emph{adjacent} if their intersection has cardinality $d$, or equivalently, dimension $d-1$. For example, each $H_i$ is adjacent to $H_{i+1}$. The \emph{dual graph} of a pure simplicial $d$-complex $\Delta$ has nodes corresponding to the facets of $\Delta$; two nodes are connected by an arc if and only if the corresponding facets of $\Delta$ are adjacent. 
A pure simplicial $d$-complex $\Delta$ is \emph{strongly-connected} if its dual graph is connected. For $d \ge 1$, every strongly-connected $d$-complex is connected, and when $d=1$ the two notions coincide.  According to our convention, all strongly-connected simplicial complexes are pure.

\begin{remark} \label{rem:HnotSC} The statement ``the dual graph of any Hamiltonian $d$-complex is Hamiltonian" holds true only for $d=1$: For example, the Hamiltonian  simplicial complex
\[\Delta_1= 123, 234, 345, 456, 567, 678, 789, 189, 129, 147 \] 
is not even strongly connected, because the facet $147$ is isolated in the dual graph. The deletion of vertex $1$ from $\Delta_1$ yields a simplicial complex that is not even pure.
\end{remark}


\newpage

\section{Weakly-traceable/Hamiltonian complexes and ridge degrees}
In this section, we introduce two weaker notions of traceability and Hamiltonicity that first appeared in \cite{Ketal}, and we study their nontrivial relationship with the ``ridge degree'', i.e. how many $d$-faces contain any given $(d-1)$-face. This relationship has a long history, beginning in 1952 with one of the most classical results in graph theory, due to Gabriel Dirac \cite{Dirac}, the son of Nobel Prize physicist Paul Dirac:

\begin{theorem}[Dirac \cite{Dirac}] Let $G$ be a graph with $n$ vertices. 
If $\deg v \ge \frac{n}{2}$ for every vertex~$v$, then $G$ is Hamiltonian. 
\end{theorem}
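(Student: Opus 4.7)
The plan is to follow the classical extremal/rotation argument in four steps.

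First, I would show that $G$ is connected. If $G$ had a component $C$ with $|C|\le n/2$, then any vertex $v\in C$ would satisfy $\deg v\le |C|-1<n/2$, contradicting the hypothesis. So every component has more than $n/2$ vertices, and there can be only one.

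Second, I would pick a longest path $P\colon v_1 v_2\cdots v_k$ in $G$ and observe that, by maximality, every neighbor of $v_1$ and every neighbor of $v_k$ must already lie on $P$ --- otherwise $P$ could be prolonged at that endpoint.

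Third, the key step, I would close $P$ into a cycle of length $k$. Define
\[
I=\{\,i\in\{1,\ldots,k-1\}:v_1 v_{i+1}\in E\,\},\qquad J=\{\,i\in\{1,\ldots,k-1\}:v_i v_k\in E\,\}.
\]
By the previous step, $|I|=\deg v_1\ge n/2$ and $|J|=\deg v_k\ge n/2$, so $|I|+|J|\ge n>k-1$, which forces $I\cap J\neq\emptyset$ by pigeonhole. For any $i\in I\cap J$, the sequence
\[
v_1 v_2\cdots v_i v_k v_{k-1}\cdots v_{i+1}v_1
\]
is a cycle $C$ of length $k$ in $G$.

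Finally, I would argue that $k=n$. If not, connectedness gives some $w\notin V(C)$ adjacent to some $v_j\in V(C)$; cutting $C$ just before $v_j$ unrolls the cycle into a path on the same $k$ vertices ending at $v_j$, to which we can append $w$, producing a path of length $k+1$ and contradicting the maximality of $P$. Hence $C$ is a Hamiltonian cycle of $G$.

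The main delicate point is the rotation--pigeonhole in step three: one must be careful that both index sets $I$ and $J$ are defined so as to live inside $\{1,\dots,k-1\}$, so that $|I|+|J|>k-1$ forces a common index which in turn exhibits the desired cycle. Every other step is routine extremal bookkeeping.
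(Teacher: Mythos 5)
Your proof is correct: it is the standard longest-path/rotation argument, and each step (connectivity via component sizes, maximality of the path trapping the neighbors of the endpoints, the pigeonhole on the index sets $I,J\subseteq\{1,\dots,k-1\}$ with $|I|+|J|\ge n>k-1$, and the extension of a non-spanning cycle to a longer path) is sound. The paper itself gives no proof of Dirac's theorem --- it is quoted as a classical result from \cite{Dirac} --- so the natural comparison is with how the paper derives the stronger Ore theorem (A): there, one saturates $G$ to a maximal non-Hamiltonian graph $G^*$, observes that $G^*$ is traceable, and invokes Proposition~\ref{prop:Ore1}, whose Case 2 is exactly your pigeonhole on the sets $\{i:\sigma*i\in\Delta\}$ and $\{i:(i-1)*\tau\in\Delta\}$ (your $I$ and $J$ in the one-dimensional case). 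Your route avoids the saturation trick at the cost of the two extra bookkeeping steps (connectivity and the cycle-to-path extension), and has the advantage of being self-contained; the paper's route is what generalizes to higher dimensions, where traceability must be assumed rather than manufactured. One small caveat worth adding: as stated the theorem needs $n\ge 3$ (for $n=2$ the hypothesis is met by $K_2$, which has no Hamiltonian cycle); with $n\ge 3$ your argument also guarantees $k\ge 1+\tfrac{n}{2}\ge 3$, so the closed walk in step three is a genuine cycle.
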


Later {\O}ystein Ore \cite{Ore} improved Dirac's result and extended it to traceable graphs: 

\begin{theorem}[Ore \cite{Ore}] \label{thm:OreGraphs} Let $G$ be a graph with $n$ vertices. 
\begin{compactenum}[ \rm (A) ] 
\item  If   $\deg u + \deg v \ge n$ for all non-adjacent vertices $u,v$, the graph $G$ is Hamiltonian. 
\item If  $\deg u + \deg v \ge n-1$ for all non-adjacent vertices $u,v$, the graph $G$ is traceable.
\end{compactenum}
\end{theorem}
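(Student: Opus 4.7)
The plan is to prove part (A) by the classical edge-swap argument (sometimes called the rotation or Ore--Bondy closure trick) and then deduce part (B) as a corollary via a universal-vertex reduction.

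For (A), I would argue by contradiction. Suppose some graph $G$ satisfies $\deg u + \deg v \ge n$ for every non-adjacent pair $u,v$ but is not Hamiltonian. Adding edges to $G$ can only make the Ore condition stronger, so by iteratively adding edges as long as the result remains non-Hamiltonian, I may assume $G$ is \emph{edge-maximal} with these two properties: $G$ is not Hamiltonian, yet inserting any absent edge produces a Hamiltonian cycle. Since $G$ is not complete (for $n \ge 3$ the complete graph is Hamiltonian), fix two non-adjacent vertices $u,v$. Inserting the edge $uv$ produces a Hamiltonian cycle which must traverse $uv$, so removing $uv$ from that cycle yields a Hamiltonian path in $G$ of the form $v_1 v_2 \cdots v_n$ with $v_1 = u$ and $v_n = v$.

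Now comes the rotation step. Define
\[ A \eqdef \{\, i \in \{2,\ldots,n\} : v_1 v_i \in E(G)\,\}, \qquad B \eqdef \{\, i \in \{2,\ldots,n\} : v_n v_{i-1} \in E(G)\,\}. \]
If some index $i$ lay in $A \cap B$, then the rerouted sequence
\[ v_1, v_2, \ldots, v_{i-1}, v_n, v_{n-1}, \ldots, v_i, v_1 \]
would be a Hamiltonian cycle in $G$, contradicting non-Hamiltonicity. Hence $A$ and $B$ are disjoint subsets of $\{2,\ldots,n\}$. Since $v_1$ and $v_n$ are non-adjacent, $n \notin A$ and $2 \notin B$, so in particular $A \cup B \subseteq \{2,\ldots,n\}$; because all neighbors of $v_1$ and of $v_n$ lie on this Hamiltonian path, $|A| = \deg v_1$ and $|B| = \deg v_n$. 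Disjointness therefore gives $\deg v_1 + \deg v_n = |A| + |B| \le n-1$, contradicting the hypothesis $\deg u + \deg v \ge n$.

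For (B), the cleanest approach is the universal-vertex trick. Given $G$ satisfying $\deg u + \deg v \ge n-1$ for all non-adjacent $u,v$, form a new graph $G'$ on $n+1$ vertices by adding a vertex $w$ adjacent to every vertex of $G$. Non-adjacent pairs in $G'$ are precisely the non-adjacent pairs of $G$, and the degree of each original vertex in $G'$ is one more than in $G$. Thus for any non-adjacent $x,y$ in $G'$,
\[ \deg_{G'} x + \deg_{G'} y \;=\; \deg_G x + \deg_G y + 2 \;\ge\; (n-1) + 2 \;=\; n+1. \]
By part (A) applied to $G'$ (which has $n+1$ vertices), $G'$ admits a Hamiltonian cycle; removing $w$ from that cycle leaves a Hamiltonian path in $G$, proving traceability.

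The only delicate step is the rotation count in (A): one must verify that $A$ and $B$ are really disjoint and that the endpoint-exclusions $n \notin A$ and $2 \notin B$ are compatible, so that $|A|+|B| \le n-1$. The rest is routine once the edge-maximal reduction is in place, and part (B) is a one-line reduction to part (A).
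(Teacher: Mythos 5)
Your proof is correct, and it is the classical self-contained argument, whereas the paper deliberately routes both parts through its new higher-dimensional machinery. For part (A) the paper also begins with the edge-maximal non-Hamiltonian reduction, but then invokes Proposition~\ref{prop:Ore1} (its $d$-dimensional Ore theorem) applied to the traceable graph $G^*$; unwinding that proposition in the case $d=1$, its faces $S_i=\sigma\ast i$ and $T_i=(i-1)\ast\tau$ are exactly your edges $v_1v_i$ and $v_{i-1}v_n$, and its Case~1/Case~2 dichotomy is exactly your rotation-and-disjointness count. So for (A) you and the paper share the same combinatorial core; you just carry it out directly rather than as a specialization. For part (B) the divergence is genuine: you use the universal-vertex (coning) trick to reduce (B) to (A), which the paper explicitly acknowledges as the quick route for graphs but cannot use in higher dimensions (cf.\ Remark~\ref{rem:trouble}), forcing it instead through the notion of quasi-traceable complexes and Proposition~\ref{prop:Ore2}. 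What your approach buys is brevity and self-containment; what the paper's buys is that the same argument generalizes to $d$-complexes. Two minor cosmetic points: the exclusions $n\notin A$ and $2\notin B$ are not needed for the bound $|A|+|B|\le n-1$ (disjointness inside $\{2,\ldots,n\}$ already gives it), and, as with every statement of Ore's theorem, the conclusion of (A) implicitly requires $n\ge 3$ so that the edge-maximal graph is not complete; neither affects correctness.
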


Two years later P\'osa extended Ore's condition (A) much further:

\begin{theorem}[P\'osa \cite{Posa}] \label{thm:PosaGraphs} Let $G$ be a graph with $n$ vertices. Order the vertices $v_1, \ldots, v_n$ so that the respective degrees are weakly increasing, $d_1 \le d_2 \le \ldots \le d_n$. 
\begin{compactenum}[ \rm (C) ] 
\item If for every $k < \frac{n}{2}$ one has $d_k > k$, the graph $G$ is Hamiltonian. 
\end{compactenum}
\end{theorem}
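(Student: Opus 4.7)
The plan is to argue by contradiction using P\'osa's celebrated rotation-extension technique. Suppose $G$ satisfies the degree hypothesis but is not Hamiltonian. I would first extract structural consequences of the condition: if $G$ had a connected component of size $c \le n/2$, then all $c$ of its vertices would have degree at most $c-1$, forcing $d_c \le c-1 < c$ (when $c < n/2$) or $d_{c-1} \le c-1$ (when $c = n/2$), contradicting the hypothesis. Hence $G$ is connected, and a refined version of the same counting shows $G$ is in fact $2$-connected, which will be needed at the end.

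Next I would pick a longest path $P \colon v_0 v_1 \cdots v_\ell$ in $G$ and apply \emph{rotation}. Since $P$ is maximum, all neighbors of $v_0$ lie on $P$, and whenever $v_0 v_i$ is an edge with $i \ge 2$, the sequence $v_{i-1} v_{i-2} \cdots v_0 v_i v_{i+1} \cdots v_\ell$ is another longest path with left endpoint $v_{i-1}$ and unchanged right endpoint $v_\ell$. Iterating these rotations produces a set $A \subseteq V(P)$ of vertices each realizable as the left endpoint of some longest path ending at $v_\ell$. The heart of the argument is the rotation lemma: for every $a \in A$ with corresponding longest path $a = w_0 w_1 \cdots w_\ell$, each neighbor $w_i$ of $a$ satisfies $w_{i-1} \in A$ (the rotation at index $i$ exhibits $w_{i-1}$ as a new left endpoint, and an induction on rotation steps places it in $A$). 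This bounds $\deg(a) \le |A|$ for every $a \in A$.

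Setting $k \eqdef |A|$, we have produced $k$ distinct vertices of degree at most $k$. If $k < n/2$, this immediately contradicts $d_k > k$. Otherwise $k \ge n/2$, and applying the analogous construction to $v_\ell$'s neighborhood, together with pigeonhole, forces an index $i$ with both $v_0 v_i$ and $v_{i-1} v_\ell$ edges, producing a cycle of length $\ell + 1$; if $\ell < n-1$, the $2$-connectivity of $G$ lets us attach a vertex outside the cycle to get a strictly longer path, contradicting the maximality of $P$, and if $\ell = n-1$ the cycle is Hamiltonian.

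The main obstacle is the rotation lemma itself. The set $A$ evolves as rotations are iterated, and one must verify that $A$ is genuinely closed under the operation $w_i \mapsto w_{i-1}$ for $w_i$ a neighbor of $a \in A$ — not merely that this operation produces \emph{some} new longest-path endpoint, but that the specific endpoint lies in the closure-under-rotation of the original $P$. Formalizing this closure property requires a careful induction on the number of rotation steps performed, and is precisely where P\'osa's original argument invests most of its technical effort.
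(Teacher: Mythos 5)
Your route is genuinely different from the paper's: the paper deduces P\'osa's theorem from its higher-dimensional Proposition~\ref{prop:PosaD}, applied to an edge-maximal non-Hamiltonian supergraph $G^*$ (which is traceable, so the proposition applies); the engine there is a single ``P\'osa flip'' at one end of a Hamiltonian path chosen to maximize the endpoint degree sum, producing $d_\sigma$ vertices of degree at most $d_\sigma<\frac{n}{2}$. You instead run the full rotation--extension machine on $G$ itself. Your Steps 1--3 are sound: the connectivity and $2$-connectivity counts are correct, and once $A$ is defined as the closure of $\{v_0\}$ under rotations (with witness paths carried along by induction on the number of rotation steps), the bound $\deg(a)\le|A|$ for every $a\in A$ is immediate, so the case $|A|<\frac{n}{2}$ does contradict $d_{|A|}>|A|$.

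The gap is in the case $k=|A|\ge \frac{n}{2}$. Your pigeonhole needs $\deg v_0+\deg v_\ell\ge \ell+1$, and nothing established so far gives a \emph{lower} bound on these two degrees: the rotation lemma only yields upper bounds such as $\deg v_0\le|A|$, and the hypothesis only gives $\deg v\ge 2$ pointwise. In fact, in this configuration both endpoints typically have degree below $\frac{n}{2}$, so the inequality you need is not merely unproved but generally false. The standard repair is a second round of rotations: for each $a\in A$ with witness path $Q_a$, rotate the other end to obtain a set $B_a$ of right endpoints; the same degree count forces $|B_a|\ge\frac{n}{2}$, and no $b\in B_a$ is adjacent to $a$ (otherwise a cycle on $V(P)$ arises, which by connectivity and maximality of $P$ gives a longer path or a Hamiltonian cycle). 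Hence $\deg(a)\le \ell-|B_a|\le (n-1)-\frac{n}{2}<\frac{n}{2}$ for every $a\in A$, exhibiting at least $\lceil n/2\rceil$ vertices of degree at most $\lceil n/2\rceil-1$ and contradicting $d_j>j$ for $j=\lceil n/2\rceil-1<\frac{n}{2}$. (Alternatively: the hypothesis forces more than $\frac{n}{2}$ vertices to have degree at least $\frac{n}{2}$, so one may pick $a\in A$ and $b\in B_a$ both of degree at least $\frac{n}{2}$ and run your pigeonhole on the longest path from $a$ to $b$.) Without some such second pass, Step 4 does not close.
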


These theorems have been generalized in five main directions, over the course of more than a hundred papers (see also Li \cite{Li} for a survey with a different perspective than ours):
\begin{compactenum}
\item 
Bondy and Chv\'atal  \cite{BondyA, Bondy, Chvatal, BondyChvatal}  weakened the antecedent in the implication (C) of P\'osa's theorem (see \cite{Farrugia} for an application to self-complementary graphs);
\item Bondy \cite{BondyB} strengthened the conclusion of Ore's theorem, from Hamiltonian to \emph{pancyclic} (=containing cycles of length $\ell$ for any $3 \le \ell \le n$); later Schmeichel--Hakimi \cite{SchmeichelHakimi} showed that P\'osa and Chv\'atal's theorems can be strengthened in the same direction;
\item Fan  \cite{Fan} showed that for $2$-connected graphs, it suffices to check Ore's condition for vertices $u$ and $v$ at distance $2$; and even more generally, it suffices to check that for any two vertices at distance two, at least one of them has degree $\ge \frac{n}{2}$.  With these weaker assumptions he was still able to achieve a pancyclicity conclusion. See  \cite{BedrossianChenSchelp}, \cite{LiLiFeng}, \cite{ChaoSongZhang} for recent extensions of Fan's work.
\item A forth line of generalizations of Ore's theorem involved requiring certain vertex sets to have large neighborhood unions, rather than large degrees: Compare Broersma--van den Heuvel--Veldman  \cite{BHV} and Chen--Schelp \cite{ChenSchelp}. 
\end{compactenum}
Here we are interested in the \emph{fifth} main direction, namely, the generalization to higher dimensions. This is historically a rather difficult task: As of today, no straightforward extension of Ore's theorem or of P\'osa's theorem  is known. However, some elegant positive results were obtained in 1999 by Katona and Kierstead \cite{KatKie}, who applied extremal graph theory to generalize Dirac's theorem to simplicial complexes with a huge number of vertices. Building on the work by Katona and Kierstead \cite{KatKie}, R\"odl, Szemer\'edi, and Ruci\`nski \cite{RSR} were able in 2008 to prove the following `extremal' version of Dirac's theorem:

 \begin{theorem}[{R\"odl--Szemer\'edi--Ruci\'nski \cite{RSR}}] \label{thm:RSR}
For all integers $d \ge 2$ and for every $\varepsilon > 0$ there exists a (very large) integer $n_0$ such that every 
$d$-dimensional simplicial complex $\Delta$ with more than $n_0$ vertices, and such that every $(d-1)$-face of $\Delta$ is in at least
$n ( \frac{1}{2} + \varepsilon )$
facets, is Hamiltonian.
\end{theorem}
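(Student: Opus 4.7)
The plan is to combine the \emph{absorbing method} pioneered by Rödl, Ruciński and Szemerédi with the \emph{hypergraph regularity lemma}, assembling a tight Hamiltonian cycle in three stages: an absorbing tight path that can swallow a small leftover set, a reservoir used for making connections, and a long near-spanning tight path produced from a regular partition of~$\Delta$.

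First I would establish an \emph{absorption lemma}: for any small $\eta > 0$ there exists an absorbing tight path $P_A$ on at most $\eta n$ vertices such that for every $S \subseteq V(\Delta) \setminus V(P_A)$ with $|S| \le \eta^2 n$, one can rewire $P_A$ into a tight path on $V(P_A) \cup S$ having the same two ordered $(d-1)$-faces at its endpoints. The construction is probabilistic: for each vertex $v$ one uses the codegree bound $(\tfrac{1}{2} + \varepsilon) n$ to count bounded-size \emph{absorbing gadgets}, i.e.\ short tight sub-paths that remain tight paths after inserting $v$ in a designated internal position. A Chernoff/Janson-type concentration argument then picks a random subfamily so that each $v$ has many gadgets available and no two chosen gadgets conflict; these gadgets are finally linked by the connecting lemma below into one tight path.

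Second I would prove a \emph{connecting lemma}: any two ordered $(d-1)$-faces can be joined by a tight path of bounded length using the codegree assumption, and one sets aside a small \emph{reservoir} $R$ of about $\gamma n$ vertices inside which such connections remain available for arbitrary endpoint pairs. After removing $V(P_A)\cup R$, apply the (strong) hypergraph regularity lemma to decompose the remaining vertex set into regular blocks, pass to a ``reduced'' hypergraph whose codegree is still above $\tfrac12$ by a standard slicing argument, locate a near-perfect fractional matching there, and lift it to a single tight path covering all but at most $\eta^3 n$ of the remaining vertices. Finally, hook the ends of this long path to the endpoints of $P_A$ through connectors drawn from $R$, close the cycle with one last connector, and invoke the absorbing property of $P_A$ to swallow the $o(n)$ vertices left over (both the uncovered ones outside $R$ and the unused portion of $R$).

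The main technical obstacle is the absorption step, because tight paths are far more rigid than loose paths or matchings: an absorbing gadget occupies a fixed, highly constrained configuration of vertices, and one must guarantee that \emph{many} gadgets exist for \emph{every} potential leftover vertex. The lower-bound count relies decisively on the strict-majority codegree $(\tfrac12 + \varepsilon)n$; any weakening below $n/2$ breaks the argument, which is consistent with extremal constructions (bipartite-like hypergraphs) showing that $\tfrac12$ is sharp. A secondary obstacle is to ensure that $P_A$, $R$, and the regular partition do not overlap badly; one handles this by constructing them sequentially with hierarchically small parameters $\eta \ll \gamma \ll \varepsilon$, so that the codegree hypothesis survives each restriction and the leftover sets at each stage stay within the absorbable regime.
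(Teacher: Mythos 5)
The paper does not prove Theorem \ref{thm:RSR}: it is quoted from R\"odl--Szemer\'edi--Ruci\'nski \cite{RSR} purely as background, with no argument supplied. Your sketch reproduces the absorbing-method strategy of that original proof (absorbing tight path, reservoir, connecting lemma, and an almost-spanning tight path obtained from weak hypergraph regularity), so it is essentially the same approach as the source on which the paper relies.
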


Now we are ready to introduce the main definition of the present section. Recall that two facets of a pure simplicial $d$-complex are \emph{incident} if their intersection is nonempty. 

\begin{definition}[weakly-traceable, weakly-Hamiltonian] \label{def:WH}
A $d$-dimensional simplicial complex $\Delta$ is \emph{weakly-traceable} if  if it has a labeling such that $\Delta$ contains faces $H_{i_1}, \ldots, H_{i_k}$ from $\{H_1, \ldots, H_{n-d}\}$ that altogether cover all vertices, and such that $H_{i_j}$ is incident to $H_{i_{j+1}}$ for each $j \in \{1, \ldots, k-1\}$. In this case, we call the list $H_{i_1}, \ldots, H_{i_k}$ a \emph{weakly-Hamiltonian path}.\\
A $d$-dimensional simplicial complex $\Delta$ is \emph{weakly-Hamiltonian} if it has a labeling such that $\Delta$ contains faces $H_{i_1}, \ldots, H_{i_k}$ from $H_1, \ldots, H_n$ that altogether cover all vertices, such that $H_{i_j}$ is incident to $H_{i_{j+1}}$ for each $j \in \{1, \ldots, k-1\}$, and in addition $H_{i_k}$ is incident to $H_{i_1}$. In this case, we call the list $H_{i_1}, \ldots, H_{i_k}$ a \emph{weakly-Hamiltonian cycle}.
\end{definition}

\begin{remark} These notions are not new. For what we called ``weakly-Hamiltonian'', Keevash et al. \cite{Ketal} use the term ``generic Hamiltonian''. Their paper  \cite{Ketal} focuses however on the stronger notion of ``loose-Hamiltonian'' complexes, which are weakly-Hamiltonian complexes where all of the intersections $H_{i_j} \cap H_{i_{j+1}}$ consist of a single point (with possibly one exception). By definition, all Hamiltonian complexes are loose-Hamiltonian, and all loose-Hamiltonian complexes are weakly-Hamiltonian. For $d=1$ all these different notions converge: ``Weakly-Hamiltonian $1$-complexes'' are simply ``graphs with a Hamiltonian cycle'', and  ``weakly-traceable $1$-complexes'' are ``graphs with a Hamiltonian path''.  In 2010 Han--Schacht  \cite{HS} and independently Keevash et al.  \cite{Ketal} proved the following extension of Theorem \ref{thm:RSR} above:
\begin{theorem}[{Han-Schacht \cite{HS}, Keevash et al. \cite{Ketal}}]
For all integers $d \ge 2$ and for every $\varepsilon > 0$ there exists a (very large) integer $n_0$ such that every 
$d$-dimensional simplicial complex $\Delta$ with more than $n_0$ vertices, and such that every $(d-1)$-face of $\Delta$ is in at least
$n (\frac{1}{2d} + \varepsilon)$
facets, is loose-Hamiltonian, and in particular weakly-Hamiltonian.
\end{theorem}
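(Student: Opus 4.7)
The plan is to adapt the absorbing method of Rödl, Ruciński and Szemerédi to the loose Hamiltonian setting. The scheme has four phases: construct a short \emph{absorbing} loose path $A$ that can incorporate arbitrary small vertex sets; reserve a small \emph{connecting} set $R$; build a long loose path covering most of the remaining vertices; and finally absorb the leftovers and close up into a loose cycle.

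First, I would prove an \emph{absorbing lemma}: under the hypothesis that every $(d-1)$-face of $\Delta$ lies in at least $(\frac{1}{2d}+\varepsilon)n$ facets, for each vertex $v$ there are at least $\Omega(n^{2d})$ short loose paths $L_v$ in $\Delta$ such that both $L_v$ and a re-routing of $L_v$ through $v$ are loose paths in $\Delta$. A standard alteration argument (select each candidate gadget independently with a small probability, then delete overlaps) then produces a single loose path $A$ of sublinear length that can absorb any vertex set $S \subseteq V(\Delta)\setminus V(A)$ of size $o(n)$.

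Second, I would select $R$ by including each vertex independently with some small probability $p = n^{-\alpha}$, and use Chernoff-type concentration together with the ridge-degree condition to guarantee that any two admissible $(d-1)$-faces can be joined through $R$ by a short loose path. Third, I would execute a greedy extension argument in $V(\Delta)\setminus(V(A)\cup R)$: as long as the uncovered set has linear size, the ridge-degree condition produces enough extensions to lengthen the current loose path by at least one facet at each step, until only $o(n)$ vertices remain uncovered. Fourth, those leftover vertices are absorbed into $A$, and the resulting path is closed through $R$ into a loose Hamiltonian cycle; this is in particular a weakly-Hamiltonian cycle.

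The main obstacle will be the absorbing lemma. The threshold $\frac{1}{2d}+\varepsilon$ is not chosen for convenience: at exactly $\frac{1}{2d}$ there are extremal ``near-bipartite'' complexes where the number of absorbing gadgets for some vertex collapses, so the proof has to exploit the $\varepsilon$-slack in a non-trivial way. Typically this is done via supersaturation or a link-graph argument showing that, for most $(d-1)$-faces $\sigma$, the link of $\sigma$ has matching number linear in $n$, which is exactly what is needed to build the gadget $L_v \to L_v \cup \{v\}$. Once the gadget count is in hand, the remaining three phases are routine applications of probabilistic combinatorics and the greedy extension principle.
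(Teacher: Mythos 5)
This theorem is not proved in the paper: it is quoted verbatim from H\'an--Schacht and Keevash--K\"uhn--Mycroft--Osthus, so there is no internal argument to compare against. Your outline does reproduce the architecture of the cited proofs (absorbing path, random reservoir, long path covering almost everything, absorb and close up), and your remark that the $\varepsilon$-slack above $\tfrac{1}{2d}$ must be exploited in the absorbing lemma is on target. But as a proof the proposal has two genuine gaps beyond ``deferring the hard lemma.''

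First, the path-cover phase. You claim that a greedy extension argument lengthens the loose path ``by at least one facet at each step, until only $o(n)$ vertices remain uncovered.'' At the threshold $(\tfrac{1}{2d}+\varepsilon)n$ this fails: the degree hypothesis controls only how many facets contain a given $(d-1)$-face, so to extend a loose path from its endpoint $v$ you must first complete $v$ to a $(d-1)$-face using $d-1$ further \emph{uncovered} vertices before the codegree bound even applies, and then the $(\tfrac{1}{2d}+\varepsilon)n$ guaranteed extensions may all land outside the shrinking uncovered set once that set has size below roughly $n(1-\tfrac{1}{2d})$. This is exactly why H\'an--Schacht prove the cover step via the weak hypergraph regularity lemma together with an almost-perfect loose-path tiling of the cluster hypergraph (and Keevash et al.\ use a comparable non-greedy device); the many short paths so obtained are then concatenated through the reservoir. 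Replacing this by ``greedy extension'' is not a routine simplification but a step that would fail. Second, a smaller but real omission: a loose Hamilton cycle imposes a divisibility constraint on $n$ (each new facet contributes $d$ fresh vertices), which is why the paper's definition of loose-Hamiltonian allows one exceptional intersection of size $>1$; your closing-up step should say how the final connection handles the residue class of $n$ modulo $d$. Finally, note that the statement as transcribed in the paper conditions only on $(d-1)$-faces \emph{of} $\Delta$, whereas the cited results assume the codegree bound for \emph{every} $d$-element vertex subset; any proof must use the stronger hypothesis (otherwise two disjoint copies of $\Sigma^d_{n/2}$ give a counterexample), and your absorbing and connecting lemmas implicitly do so.
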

\end{remark}

\begin{remark} In Definition \ref{def:WH}, note that if $\Delta$ is weakly-traceable, necessarily $i_1 = 1$ and $i_k = n-d$, because otherwise $1$ and $n$ would not be covered. So equivalently, in Def.~\ref{def:WH} we could demand \[ \{i_2, \ldots, i_{k-1}\} \subset \{2, \ldots, n-d-1\}.\] Note also that  if a labeling $v_1, \ldots, v_n$ makes $\Delta$ (weakly-) traceable, so does the ``reverse labeling'' $v_{n}, \ldots, v_{1}$.
As for Hamiltonian complexes: If a labeling $v_1, \ldots, v_n$ makes $\Delta$ weakly-Hamiltonian, so does its reverse, and also $v_{i_1}, \ldots, v_{i_n}$, where $(i_1, \ldots, i_n)$ is any cyclic permutation of $(1, \ldots, n)$. So we may assume that $i_1=1$. Or we may assume that $i_k=n-d$. But as the next remark shows, we cannot assume both.
\end{remark}

\begin{remark} \label{rem:surprise}
When $d>1$, not all weakly-Hamiltonian $d$-complexes are weakly-traceable. For $d=2$, a simple counterexample is given by
\[ \Delta_0 = 123, 156, 345.\] 
The weakly-Hamiltonian cycle is of course $H_1, H_3, H_5$. Any labeling that makes $\Delta_0$ weakly-Hamiltonian is either the reverse or a cyclic shift (or both) of the labeling above. For parity reasons, in any labeling that makes $\Delta_0$ weakly-Hamiltonian, only one of $H_1$ and $H_4$ is in $\Delta_0$. \end{remark} 

\begin{remark} \label{rem:An}
Weakly-traceable complexes are obviously connected. Weakly-Hamiltonian complexes are even $2$-connected, in the sense that the deletion of any vertex leaves them connected.  The converses are well-known to be false already for $d=1$. In fact, let $n \ge 4$. Let $A_{n-2}$ be the edge-less graph on $n- 2$ vertices. Let $x, y$ be two new vertices. The ``suspension''
 \[ \operatorname{susp}(A_{n-2}) \ \eqdef \  A_{n-2} \: \cup \:  \{ x \ast v \, : \: v \in A_{n-2}\}  \: \cup \: \{ y \ast v \, : \: v \in A_{n-2}\} \]
is a $2$-connected graph on $n$ vertices that is not Hamiltonian for $n \ge 5$, and not even traceable for $n \ge 6$. In higher dimensions, the $\Delta^d_3$ of Lemma \ref{lem:WheelComplex} is $d$-connected, but neither weakly-traceable nor weakly-Hamiltonian. 
\end{remark}

We start with a few Lemmas that are easy, and possibly already known; we include nonetheless a proof for the sake of completeness. For the following lemma, a \emph{subword} of a word is a subsequence formed by \emph{consecutive} letters of a word: So for us ``word'' is a subword of ``subword'', whereas ``sword'' is not.

\begin{lemma} Let $d \ge 2$. If a $d$-complex $\Delta$ is weakly-Hamiltonian (resp. weakly traceable), then for any $k \in \{1,  \ldots, d\}$ the $k$-skeleton of $\Delta$ is weakly-Hamiltonian (resp. weakly-traceable). 
\end{lemma}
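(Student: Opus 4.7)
I plan to argue by induction on $d-k$; the base case $k=d$ is immediate since the $d$-skeleton of a $d$-complex is the complex itself. For the inductive step it suffices to show that whenever a $d$-complex $\Delta$ with $d\ge 2$ is weakly-Hamiltonian, its $(d-1)$-skeleton $\Delta^{(d-1)}$ is weakly-Hamiltonian as a $(d-1)$-complex; iterating this one-step reduction yields the lemma for every $k\le d$. The weakly-traceable case runs in parallel, with an open walk in place of a closed one (starting at the $(d-1)$-face $\{1,\ldots,d\}$ and ending at $\{n-d+1,\ldots,n\}$), so I focus on the Hamiltonian statement.

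Fix a labeling making $\Delta$ weakly-Hamiltonian, and let $H_{i_1},\ldots,H_{i_m}$ be the resulting cyclic sequence of $d$-faces $H_t=\{t,t+1,\ldots,t+d\}$. The only $(d-1)$-subfaces of $H_{i_j}$ of the required ``$H$-form'' $\{s,s+1,\ldots,s+d-1\}$ are
\[A_j:=\{i_j,\ldots,i_j+d-1\}\quad\text{and}\quad B_j:=\{i_j+1,\ldots,i_j+d\}.\]
These two share $d-1$ vertices (hence are incident as $(d-1)$-faces), and together they cover $H_{i_j}$. I will construct a cyclic walk in $\Delta^{(d-1)}$ by concatenating, for $j=1,\ldots,m$, a ``block'' sub-walk that visits both $A_j$ and $B_j$.

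The crux is the inter-block transition. Because $H_{i_j}$ and $H_{i_{j+1}}$ are incident, they share a vertex $v$; I choose the exit face of block~$j$ to be whichever of $A_j,B_j$ contains $v$ (both, unless $v$ is the extreme vertex $i_j$ or $i_j+d$, in which case the choice is forced), and the entry face of block~$j+1$ analogously. The selected exit and entry $(d-1)$-faces share $v$, and so are incident. Within each block I traverse from entry to exit along the two-face path $A_j,B_j$ or $B_j,A_j$ when they differ; if entry and exit coincide, I insert a length-two detour such as $A_j,B_j,A_j$, which is legitimate because $A_j$ and $B_j$ are incident. In every case both $A_j$ and $B_j$ are visited, so every vertex of $H_{i_j}$ lies in some visited $(d-1)$-face, and since the $H_{i_j}$'s cover $\{1,\ldots,n\}$, so does the overall walk.

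The main obstacle is the coincidence case: when the sequence $i_1,\ldots,i_m$ reaches a local minimum (or maximum) at index $j$, both transitions force entry and exit on the same side, and the ``missed'' extreme vertex of $H_{i_j}$ need not lie in any other $H_{i_l}$ of the cycle, so the detour by a face repetition is essential. This relies on reading Definition~\ref{def:WH} as producing a sequence of faces that may repeat (the natural reading, consistent with the walk-based formulations of tight and loose Hamiltonian paths in~\cite{Ketal,HS}); with that latitude the argument closes cleanly, and the final walk in $\Delta^{(d-1)}$ is indeed a weakly-Hamiltonian cycle in the sense of Definition~\ref{def:WH}.
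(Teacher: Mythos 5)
Your construction is, at bottom, the same as the paper's: each $d$-face $H_{i_j}$ of the given weakly-Hamiltonian sequence is replaced by its consecutive $(k+1)$-element sub-faces and the resulting blocks are concatenated. The paper does this in one shot for arbitrary $k$ (all $(k+1)$-letter subwords of $H_{i_j}$, listed lexicographically, ``up to canceling possible redundancies''), while you specialize to $k=d-1$ and iterate, which is equivalent; your explicit attention to the inter-block transitions is more careful than the paper's one-line argument.

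The gap is exactly where you flag it. The detour $A_j,B_j,A_j$ repeats a face, and Definition~\ref{def:WH} should not be read as permitting that: for $d=1$ it specializes to an ordinary Hamiltonian path/cycle, which does not traverse an edge twice, and the paper's own proof of this lemma \emph{cancels} redundancies rather than tolerating them. So in the coincidence case your output is not a weakly-Hamiltonian path/cycle in the intended sense. Moreover, your diagnosis of when the coincidence is unavoidable is off. If entry and exit of block $j$ are both genuinely forced to the same side, then $H_{i_{j-1}}\cap H_{i_j}$ and $H_{i_j}\cap H_{i_{j+1}}$ are both the singleton consisting of the same extreme vertex of $H_{i_j}$, which gives $i_{j-1}=i_{j+1}=i_j\mp d$, i.e.\ a repeated face in the original sequence --- impossible. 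Hence a coincidence can only arise from propagated entry/exit choices (e.g.\ the pattern $H_m,\,H_{m+d},\,H_{m+d-1},\,H_{m+2d-1}$ pins block $H_{m+d-1}$ on one side from both ends), and what is needed there is not a repetition but the paper's cancellation step: one must either re-make the choices upstream or verify that the sub-face you would revisit is dispensable because its extra extreme vertex is already covered by another block. Your proof does neither, so the coincidence case is genuinely open as written.
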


\begin{proof}
Given a weakly-Hamiltonian path/cycle, replace any $d$-face $H_1$ with its $(k+1)$-letter subwords, ordered lexicographically. The result, up to canceling possible redundancies, will be a weakly-Hamiltonian path/cycle for the $k$-skeleton. 
\end{proof}

For example: if $d=3$ and $k=1$, suppose that a $3$-complex on $8$ vertices admits the Hamiltonian path \[1234, \quad 2345, \quad 5678.\]  
Then the $1$-skeleton admits the Hamiltonian path
\[ 12, \, 23, \, 34, \quad \cancel{ 23, } \, \, \cancel{ 34, } \, 45, \quad \, 56, \, 67, \, 78.\]
The next Lemma is an analog to the fact that Hamiltonian complexes are traceable. 

\begin{lemma} \label{lem:deletion} Let $\Delta$ be a $d$-dimensional complex that has a weakly-Hamiltonian cycle $H_{i_1}, \ldots, H_{i_k}$, with $k \ge 3$. For any $j$ in $\{1, \ldots, k\}$, let $m_j$ be the number of vertices of $H_{i_j}$ that are neither contained in $H_{i_{j-1}}$ nor in  $H_{i_{j+1}}$ (where by convention $i_{k+1} \eqdef i_1$).
\begin{compactitem}
\item If $m_j >0$, the deletion of  those $m_j$ vertices from $\Delta$ yields a weakly-traceable complex. 
\item If $m_j=0$, and in addition  $H_{i_{j-1}}$ and  $H_{i_{j+1}}$ are disjoint, then $\Delta$ itself is weakly-traceable.
\end{compactitem}
\end{lemma}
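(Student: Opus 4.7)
The plan is to exhibit, in each case, an explicit weakly-Hamiltonian path obtained by ``cutting'' the cycle at position $j$ and cyclically re-labeling the vertices just after $B$. I would begin by putting the cycle into cyclically monotonic form: by re-labeling along the tour induced by the cycle one may assume $i_1 < i_2 < \cdots < i_k$ cyclically, and by the incidence hypothesis each consecutive gap satisfies $i_{r+1}-i_r \leq d$. A direct inspection then shows that $H_{i_{j-1}}$ covers the ``left end'' $\{i_j,\ldots,i_{j-1}+d\}$ of $H_{i_j}$ and $H_{i_{j+1}}$ covers the ``right end'' $\{i_{j+1},\ldots,i_j+d\}$, so
\[
B \;=\; H_{i_j}\setminus\bigl(H_{i_{j-1}}\cup H_{i_{j+1}}\bigr) \;=\; \{i_{j-1}+d+1,\ldots,i_{j+1}-1\},
\]
a consecutive block of $m_j = i_{j+1}-i_{j-1}-d-1$ vertices lying in the middle of $H_{i_j}$.

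For Case~(i), where $m_j>0$, I would delete $B$ and verify three routine facts. First, every cycle face $H_{i_r}$ with $r\neq j$ survives: the condition $H_{i_r}\cap B\neq\emptyset$ forces $i_r \in \{i_{j-1}+1,\ldots,i_{j+1}-1\}$, and by cyclic monotonicity the only such $r$ is $j$ itself. Second, the surviving cycle faces still cover every remaining vertex: a vertex of $H_{i_j}\setminus B$ lies in $H_{i_{j-1}}\cup H_{i_{j+1}}$ by the definition of $B$, and a vertex outside $H_{i_j}$ was already covered by some $H_{i_r}$ with $r\ne j$ in the original cycle. Third, re-label the $n'=n-m_j$ surviving vertices by the cyclic shift sending old $i_{j+1}$ to new $1$ and continuing forward in the old cyclic order while skipping $B$; a surviving $d+1$-block cannot straddle the cut, since such straddling would force $B\subseteq H_b^{\text{old}}$ and hence destruction. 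Thus each $H_{i_r}^{\text{old}}$ with $r\ne j$ becomes a genuine $H^{\text{new}}$-face, and the images of $H_{i_{j+1}}, H_{i_{j+2}},\ldots,H_{i_{j-1}}$ are a strictly increasing sequence of $H^{\text{new}}$-faces with indices in $\{1,\ldots,n'-d\}$ starting at $1$, ending at $n'-d$, and with consecutive index-gaps $i_{r+1}-i_r\le d$ (so consecutive pairs are incident). This is exactly a weakly-Hamiltonian path for the complex obtained after the deletion.

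For Case~(ii), $m_j=0$ together with $H_{i_{j-1}}\cap H_{i_{j+1}}=\emptyset$ forces $i_{j+1}-i_{j-1}=d+1$, so $B=\emptyset$ and no vertex is removed. The identical cyclic-shift relabeling (new $1 =$ old $i_{j+1}$) applied directly to the subsequence $H_{i_{j+1}},\ldots,H_{i_{j-1}}$ gives a weakly-Hamiltonian path in $\Delta$ itself; covering of all $n$ vertices follows immediately from $m_j=0$.

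The main technical obstacle I anticipate is the bookkeeping around the cyclic relabeling: one must show that $B$ is a single consecutive block in the cyclic labeling (so that removing it and shifting is well-defined) and that no surviving cycle face wraps across the resulting cut. Both hinge on the cyclic-monotonic normalization of the cycle, which is why I would perform that normalization first; the rest of the argument is a direct translation between the two labelings.
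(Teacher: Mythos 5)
Your proof follows the same route as the paper's: cut the cycle at $H_{i_j}$, observe that the excluded vertices form a consecutive block of labels, delete them, cyclically shift the labeling so the block sits at the end, and read off the remaining faces $H_{i_{j+1}},\ldots,H_{i_{j-1}}$ as a weakly-Hamiltonian path. Your write-up is more explicit than the paper's (which compresses your three verifications into ``it is easy to see''), and all the downstream bookkeeping --- the identification of $B$ with $\{i_{j-1}+d+1,\ldots,i_{j+1}-1\}$, the survival and coverage checks, the non-straddling of the cut --- is correct \emph{granted} your opening normalization.

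That normalization is the one step you should not wave through. A weakly-Hamiltonian cycle need not visit the gap-zero faces in cyclic order of their indices: for $n=6$, $d=2$, the list $H_1,H_3,H_2,H_4,H_6,H_5$ is a legitimate weakly-Hamiltonian cycle of the annulus $H_1,\ldots,H_6$, and no relabeling of the vertices can reorder this list monotonically while keeping all six faces gap-zero (any such relabeling is a dihedral symmetry of the index cycle, which cannot turn $1,3,2,4,6,5$ into a cyclically increasing sequence). So ``re-labeling along the tour'' does not deliver $i_1<\cdots<i_k$ in general, and without monotonicity your key step fails: in that example, taking $H_{i_j}=H_3$ with neighbours $H_1,H_2$ gives $B=\{5\}$, yet vertex $5$ lies in the \emph{other} cycle faces $H_4$ and $H_5$, which are therefore destroyed by the deletion, so the surviving subsequence is not the desired path. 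To be fair, the paper's own proof leans on exactly the same tacit assumption (it asserts that the $m_j$ vertices ``belong to no other facet of the cycle'' and are labeled consecutively, which is precisely your monotone picture), so relative to the paper you have lost nothing --- you have merely made the hidden hypothesis visible. A self-contained argument would need either to justify passing to a monotone cycle without changing the set of $m_j$ deleted vertices, or to treat the non-monotone case separately.
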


\begin{proof} Fix $j$ in $\{1, \ldots, k\}$. If $m_j >0$, the $m_j$ vertices that belong to $H_{i_j}$ and to no other facet of the cycle are labeled consecutively. So up to relabeling the vertices cyclically, we can assume that they are the vertices $n-m_j + 1, \, n- m_j + 2, \ldots, n-1, n.$ Thus the facet in the cycle they all belong to is the last one, $H_{i_k}$. Now let $D$ be the complex obtained from $\Delta$ by deleting these $m_j$ vertices. It is easy to see that 
\[ H_{1}=H_{i_1}, H_{i_2}, \ldots, H_{i_{k-1}}\]
is a weakly-Hamiltonian path for $D$. \\The case $m_j=0$ is similar: Up to relabeling the vertices cyclically, $i_{j+1}=1$ and thus $j=k$. By assumption  $H_{i_{k-1}}$ and  $H_{1}$ are disjoint. But since $m_k =0$, and vertex $n$ does not belong to $H_1$, it must belong to $H_{i_{k-1}}$. Therefore $H_{i_{k-1}}=H_{n-d}$. So
\[ H_{1}=H_{i_1}, H_{i_2}, \ldots, H_{i_{k-1}}\]
is a weakly-Hamiltonian path for $\Delta$ itself.
\end{proof}

The next Lemma can be viewed as a $d$-dimensional extension of the fact that the cone over the vertex set of a graph $G$ is a Hamiltonian graph if and only if the starting graph $G$ is traceable.
 
\begin{lemma} \label{lem:buciodiculo2} Let $\Delta$ be any $d$-complex on $n$ vertices.  Let $\Sigma^{d-1}$ be the $(d-1)$-simplex. 
Let $\Gamma$ be the $d$-complex obtained by adding to $\Delta$ a  $d$-face $v \ast \Sigma^{d-1}$ for every vertex $v$ in $\Delta$. Then 
\[ \Delta \textrm{ is weakly-traceable }  \Longleftrightarrow \Gamma \textrm{ is weakly-Hamiltonian. }\] 
\end{lemma}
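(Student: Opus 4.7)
The plan is to set up a single cyclic labeling of $\Gamma$ that makes the correspondence between weakly-Hamiltonian paths in $\Delta$ and weakly-Hamiltonian cycles in $\Gamma$ transparent. Write $w_1,\dots,w_d$ for the vertices of the new $(d-1)$-simplex $\Sigma^{d-1}$, and set $N \eqdef n+d$ for the total vertex count of $\Gamma$. The key structural fact to isolate first is that every $d$-face of $\Gamma$ is either a $d$-face of $\Delta$ (and then contains none of the $w_i$) or of the form $v \ast \Sigma^{d-1}$ for some vertex $v$ of $\Delta$ (and then contains all of $w_1,\dots,w_d$); I will call these \emph{type (a)} and \emph{type (b)} respectively.

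For the forward direction, given a weakly-Hamiltonian path $H_1 = H_{i_1}, H_{i_2}, \dots, H_{i_k} = H_{n-d}$ in $\Delta$ (recalling that $i_1=1$ and $i_k=n-d$ are forced), I would extend the labeling of $\Delta$ to $\Gamma$ by setting $v_{n+j} \eqdef w_j$ for $j=1,\dots,d$. With indices taken modulo $N$, the faces $H_n = v_n \ast \Sigma^{d-1}$ and $H_{n+1} = v_1 \ast \Sigma^{d-1}$ are then type (b) faces of $\Gamma$, and
\[ H_1,\, H_{i_2},\,\dots,\, H_{n-d},\, H_n,\, H_{n+1} \]
is a weakly-Hamiltonian cycle, since $v_n \in H_{n-d} \cap H_n$, $H_n \cap H_{n+1} \supseteq \{v_{n+1},\dots,v_{n+d}\}$, and $v_1 \in H_{n+1} \cap H_1$, and the union of all listed faces covers the $N$ vertices of $\Gamma$.

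For the backward direction, given a weakly-Hamiltonian cycle $C$ of $\Gamma$, each $w_i$ appears in some face of $C$, necessarily of type (b); and every type (b) face contains all of $w_1, \dots, w_d$. Hence the labels assigned to $w_1, \dots, w_d$ all lie inside one window of $d+1$ consecutive positions mod $N$, so they must occupy $d$ consecutive positions. A cyclic relabeling of $\Gamma$ places them at positions $n+1, \dots, n+d$, and then the only faces of the form $H_j$ that are type (b) are $H_n$ and $H_{n+1}$. Any type (a) cycle-neighbor of $H_n$ must share a vertex with $H_n$, but the unique $\Delta$-vertex of $H_n$ is $v_n$, so such a neighbor must be $H_{n-d}$; by distinctness $H_n$ can have at most one type (a) neighbor in $C$, and symmetrically for $H_{n+1}$ (with $H_1$ in place of $H_{n-d}$). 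Consequently either both $H_n$ and $H_{n+1}$ lie in $C$ and are consecutive in it, so deleting them from $C$ leaves a weakly-Hamiltonian path $H_1,\dots,H_{n-d}$ of type (a) faces in $\Delta$; or $C$ has length $2$, which forces $n=d+1$ and gives $H_1 \in \Delta$, hence weak-traceability trivially.

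The main obstacle is precisely the structural observation that the $d$ vertices of $\Sigma^{d-1}$ must occupy $d$ consecutive labels in any labeling witnessing weakly-Hamiltonicity of $\Gamma$; once this is established, everything else is bookkeeping around the forced endpoints $H_1$ and $H_{n-d}$ of the surviving path, modulo a separate check of the degenerate small case $n=d+1$.
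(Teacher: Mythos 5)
Your proof follows the same route as the paper's: the forward direction appends $H_n$ and $H_{n+1}$ to the given weakly-Hamiltonian path exactly as the paper does, and the backward direction rests on the same structural claim that the vertices of $\Sigma^{d-1}$ must be labeled consecutively, after which the type (b) faces are stripped off. Your handling of the forced neighbors $H_{n-d}$ and $H_1$ and of the degenerate short cycles is in fact more explicit than the paper's, which simply deletes all faces containing a label exceeding $n$.

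There is, however, a gap in your justification of the structural claim that you yourself single out as the main obstacle. You argue that since $w_1,\dots,w_d$ all lie in a single face of the cycle, their labels lie in one window of $d+1$ consecutive positions, ``so they must occupy $d$ consecutive positions.'' This inference is invalid for $d\ge 2$: $d$ labels chosen from $d+1$ consecutive positions are consecutive only if the omitted position is an endpoint of the window, and a priori the unique $\Delta$-vertex of that type (b) face could carry an interior label (for $d=3$, nothing said so far rules out the $w$'s being labeled $j$, $j+2$, $j+3$ inside the window $\{j,\dots,j+3\}$). To close the gap you must invoke the incidences of the cycle: if the $\Delta$-vertex had label $j+m$ with $0<m<d$, then $H_j=\{j,\dots,j+d\}$ would be the unique type (b) face among $H_1,\dots,H_N$ and hence the only face of the cycle meeting the $w$'s; but every other face of the cycle is a face of $\Delta$, i.e.\ a window of $d+1\ge 2$ consecutive labels avoiding every $w$-label, and any such window containing $j+m$ would also contain $j+m-1$ or $j+m+1$, both of which are $w$-labels. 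Hence no face of the cycle could be incident to $H_j$, which is impossible outside the trivial case of a one-face cycle. This forces the omitted label to be an endpoint of the window, so the $w$'s are indeed consecutive, and the rest of your bookkeeping goes through. (The paper asserts this consecutivity without proof, so supplying the missing step is worthwhile rather than pedantic.)
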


\begin{proof} 
``$\Rightarrow$'': If $H_{i_1}, \ldots, H_{i_k}$ is a list of facets proving that $\Delta$ is weakly-traceable, then the list $H_{i_1}, \ldots, H_{i_k}, H_n, H_{n+1}$ shows that $\Gamma$ is weakly-Hamiltonian.\\
 ``$\Leftarrow$'':  Pick a labeling that makes $\Gamma$ weakly-Hamiltonian. By how the complex $\Gamma$ is constructed, the vertices of $\Sigma^{d-1}$ must be labeled consecutively; so without loss, we may assume that they are $n+1, \ldots, n+d$. Take a weakly-Hamiltonian cycle for $\Gamma$ and delete from the list all the $d$-faces containing any vertex whose label exceeds $n$. 
\end{proof}

\begin{remark}  \label{rem:trouble} The following statements are valid \textbf{only for $\mathbf{d=1}$}.
\begin{compactenum}[(i)]
\item ``$\Delta$ is weakly-traceable $ \Longleftrightarrow$  $\Delta \:  \cup  \: \: \: w \ast \operatorname{(d-1)-skel}(\Delta)$  is weakly-Hamiltonian.''
\item  ``Deleting a single vertex from a weakly-Hamiltonian $d$-complex yields a weakly-traceable complex.''
\item ``Deleting (the interior of) any of the $H_i$'s from a weakly-Hamiltonian $d$-complex yields a weakly-traceable complex.'' 
\end{compactenum}
Simple counterexamples in higher dimensions are:
\begin{compactenum}[(i)]
\item $\Delta_1=126, 234, 456, 489, 678$ is not weakly-traceable, yet
$\Delta_2 \eqdef  \Delta_1 \, \cup \, \left( 10 \ast \, \operatorname{2-skel}(\Delta_1) \right)$ admits  the weakly\--Hamiltonian cycle $234, \; 456, \; 678, \; 89 \, 10, \; 12\, 10$.
This is a counterexample to ``$\Leftarrow$''. $\, $ In contrast, the direction ``$\Rightarrow$'' holds in all dimensions.
\item If from the $\Delta_2$ above we delete vertex $10$, we get back to $\Delta_1$, not weakly-traceable.
\item $\Delta_3=1234, \; 2345, \; 5678, \; 167 \,10, \; 189\, 10$ is weakly-Hamiltonian, but the deletion of (the interior) of $5678$ yields a complex that is not weakly-traceable.
\end{compactenum}
\end{remark}

Our first non-trivial result is an ``Ore-type result'': We shall see later that in some sense it extends `most' of the proof of Ore's theorem  \ref{thm:OreGraphs}, part (A), to all dimensions.

\begin{definition} Let  $\Delta$ be a pure $d$-dimensional simplicial complex, and let $\sigma$ be any $(d-1)$-face of $\Delta$. The \emph{degree} $d_{\sigma}$ of $\sigma$ is the number of $d$-faces of $\Delta$ containing $\sigma$.
\end{definition}

\begin{proposition}\label{prop:Ore1}
Let $\Delta$ be a traceable $d$-dimensional  simplicial complex on $n$ vertices, $n > 2d$.
If in some labeling that makes $\Delta$ traceable the two $(d-1)$-faces $\sigma$ and $\tau$ formed by the first $d$ and the last $d$ vertices, respectively, satisfy 
$d_{\sigma} + d_{\tau} \ge n$, then $\Delta$ is weakly-Hamiltonian.  
\end{proposition}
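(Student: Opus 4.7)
My plan would be to emulate Ore's classical proof for graphs (the case $d=1$), with \emph{weak}-Hamiltonicity standing in for Hamiltonicity in higher dimensions. First, with the labeling that makes $\Delta$ traceable, I would set $\sigma=\{1,\ldots,d\}$ and $\tau=\{n-d+1,\ldots,n\}$, and define the ``endpoint neighborhoods''
\[
A:=\{v\in\{d+1,\ldots,n\}:\sigma\cup\{v\}\in\Delta\},\qquad B:=\{w\in\{1,\ldots,n-d\}:\tau\cup\{w\}\in\Delta\},
\]
so $|A|=d_\sigma$, $|B|=d_\tau$, and the hypothesis is $|A|+|B|\ge n$. I would then dispatch two trivial closures: if $n\in A$, the cyclic face $H_n=\sigma\cup\{n\}$ is in $\Delta$, and $H_1,H_2,\ldots,H_{n-d},H_n$ is already a weakly-Hamiltonian cycle (check $H_n\cap H_{n-d}=\{n\}$ and $H_n\cap H_1=\sigma$); symmetrically for $1\in B$ via $H_{n-d+1}$. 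Otherwise $A\subseteq\{d+1,\ldots,n-1\}$ and $B\subseteq\{2,\ldots,n-d\}$, so $A'':=\{v-1:v\in A\}$ and $B$ both live in $\{1,\ldots,n-1\}$; the bound $|A''|+|B|\ge n$ then forces $A''\cap B\neq\emptyset$. Picking any $k\in A''\cap B$ I get $v:=k+1\in A$, $w:=k\in B$ with $v-w=1$, and hence two additional facets $F_v:=\sigma\cup\{v\}$, $G_w:=\tau\cup\{w\}$ of $\Delta$.

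Next I would apply the Ore rotation: relabel the vertices in the cyclic order
\[
1,2,\ldots,d,\; v,v+1,\ldots,n,\; w,w-1,\ldots,d+1,
\]
which is a bona fide permutation $\pi$ of $\{1,\ldots,n\}$ precisely because $v=w+1$. A direct bookkeeping under $\pi$ will confirm that $H_1^{\text{new}}=F_v$, $H_{n-w+1}^{\text{new}}=G_w$, and $H_n^{\text{new}}=H_1^{\text{old}}$ all belong to $\Delta$, and that every ``pure'' window lying entirely in the forward block $v,v+1,\ldots,n$ coincides with some $H_\ell^{\text{old}}$ (with $\ell=w+j-d$), while every window entirely inside the reversed block $w,w-1,\ldots,d+1$ coincides with some $H_\ell^{\text{old}}$ (with $\ell=n+d+1-j$); all of these are in $\Delta$ by traceability. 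Concatenating them in the cyclic order
\[
F_v,\ H_v^{\text{old}},H_{v+1}^{\text{old}},\ldots,H_{n-d}^{\text{old}},\ G_w,\ H_{w-d}^{\text{old}},\ldots,H_{d+1}^{\text{old}},\ H_1^{\text{old}}
\]
should then give the desired weakly-Hamiltonian cycle: consecutive pairs are always incident -- tight-path steps share $d$ vertices; $F_v$ shares $\{v\}$ with $H_v^{\text{old}}$ and $\sigma$ with $H_1^{\text{old}}$; $G_w$ shares $\tau$ with $H_{n-d}^{\text{old}}$ and $\{w\}$ with $H_{w-d}^{\text{old}}$; and $H_1^{\text{old}}$ shares $\{d+1\}$ with $H_{d+1}^{\text{old}}$ -- and the union $\sigma\cup\{v,\ldots,n\}\cup\tau\cup\{w\}\cup\{d+1,\ldots,w\}\cup\{1,\ldots,d+1\}$ covers every vertex.

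The hard part will be the verification in the boundary regimes where one of the two ``pure'' blocks of $\pi$ collapses. The extreme values $w\in\{d,d+1\}$ and $w=n-d$ should reduce painlessly to trivial closures, because $F_v$ or $G_w$ then coincides with some $H_\ell^{\text{old}}$ of the tight path itself. The genuinely delicate range is $w\in\{d+3,\ldots,2d\}$, where the reversed block has length $\le d$, the intersection $G_w\cap H_1^{\text{old}}$ may be empty, and the middle vertices $\{d+2,\ldots,w-1\}$ may fail to be covered. This range is already empty for $d\le 2$ (so the argument is complete in dimension two with no extra work); for $d\ge 3$ I would invoke the stronger pigeonhole bound $|A''\cap B|\ge 3$ (valid for $d\ge 2$, since $A''\cup B\subseteq\{2,\ldots,n-2\}$ has cardinality only $n-3$) to re-select $k$ outside the problematic range, and, for very large $d$, I would fall back on the symmetric bound $|A\cap(B+d)|\ge d$ obtained by shifting $A$ by $-d$, which supplies $d$ distinct candidate pairs and enough flexibility to choose $(v,w)$ avoiding the short-reversed-block regime. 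Bookkeeping these edge cases is the main effort, but it introduces no idea beyond the Ore-style rotation.
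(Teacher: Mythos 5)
Your pigeonhole is exactly the paper's (its Case 2, reorganized contrapositively with the wrap\--around faces $\sigma\ast n$ and $1\ast\tau$ dispatched first), and the idea of an Ore\--style rotation is also the paper's; the gap lies in the specific rotation you chose. Your relabeling $1,\ldots,d,\;v,\ldots,n,\;w,\ldots,d+1$ reverses only the short middle block $\{d+1,\ldots,w\}$, so your cycle needs either (a) old windows $H_{w-d}^{\mathrm{old}},\ldots,H_{d+1}^{\mathrm{old}}$ inside that block, which exist only when $w\ge 2d+1$, or (b) the incidence $G_w\cap H_1^{\mathrm{old}}\ne\emptyset$, which fails for every $w\ge d+2$ because $(\{w\}\cup\tau)\cap\{1,\ldots,d+1\}=\emptyset$ once $n>2d$. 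Hence the bad range is $w\in\{d+2,\ldots,2d\}$, not $\{d+3,\ldots,2d\}$: you omitted $w=d+2$, where coverage is fine but the cycle is disconnected, so your claim that dimension two needs no extra work is false ($d=2$, $w=4$ already breaks the construction). The proposed repairs do not close the gap either: the guaranteed bound is only $|A''\cap B|\ge 3$ while the bad range has $d-1$ elements, so for $d\ge 4$ every guaranteed choice of $k$ may be bad; and the pairs supplied by $|A\cap(B+d)|\ge d$ satisfy $v=w+d$, for which your word $1,\ldots,d,v,\ldots,n,w,\ldots,d+1$ is not even a permutation of the vertex set (it omits $w+1,\ldots,w+d-1$), so they cannot be fed into the rotation.

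The fix is to rotate as the paper does: keep $1,\ldots,w$ fixed and reverse the \emph{entire} tail, i.e., set $\ell_j\eqdef j$ for $j\le w$ and $\ell_{w+m}\eqdef n-m+1$ for $m\ge 1$. Then $H_1^{\mathrm{old}},\ldots,H_{w-d}^{\mathrm{old}}$ are windows of the fixed prefix, $G_w=\{w\}\cup\tau$ is the window at positions $w,\ldots,w+d$ bridging into the reversed suffix, $H_{n-d}^{\mathrm{old}},\ldots,H_{v}^{\mathrm{old}}$ (taken in reverse) are the windows at positions $w+1,\ldots,n$, and $F_v=\sigma\cup\{v\}$ is the wrap\--around window at positions $n,1,\ldots,d$, incident to $H_v^{\mathrm{old}}$ via $v$ and to $H_1^{\mathrm{old}}$ via $\sigma$. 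This works uniformly for every $w\in\{d+1,\ldots,n-d-1\}$, which together with your trivial closures ($n\in A$, $1\in B$, $w=d$, $w=n-d$) exhausts all cases with no residual regime and no re\--selection of $k$.
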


\begin{proof}Since $n > 2d$, the two faces $\sigma$ and $\tau$ are disjoint.  Let $J \eqdef \{d+2, d+3, \ldots, n-d\}$.  For every $i$ in $J$, which has cardinality $n-2d-1$, consider the two $d$-faces of $\Sigma^d_n$ 
\[ S_i \eqdef \sigma \ast i \quad \textrm{ and } \quad T_i \eqdef (i-1) \ast \tau.\] 

\begin{figure}[htbp]
\begin{center}
\includegraphics[scale=0.64]{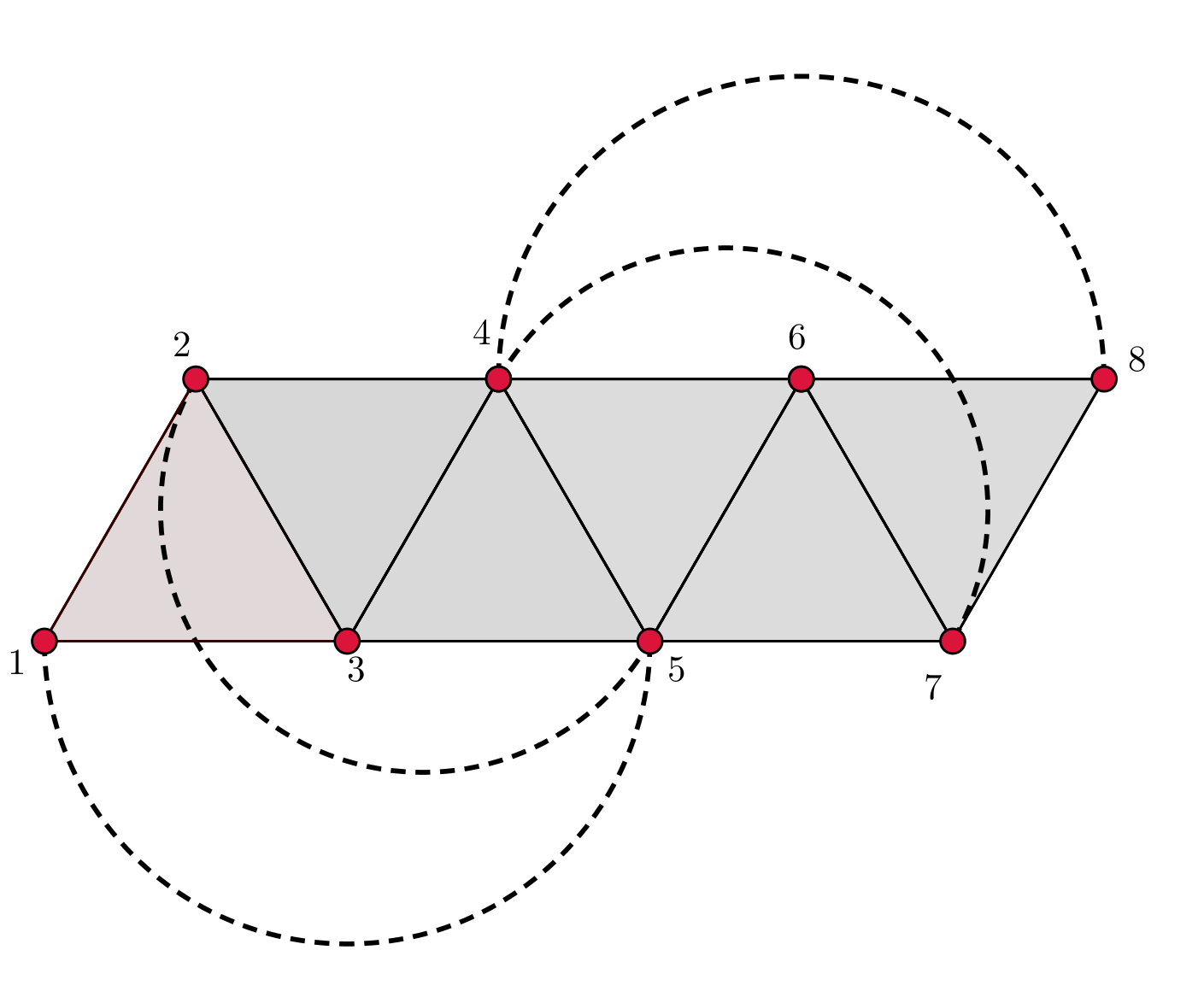} 
\includegraphics[scale=0.64]{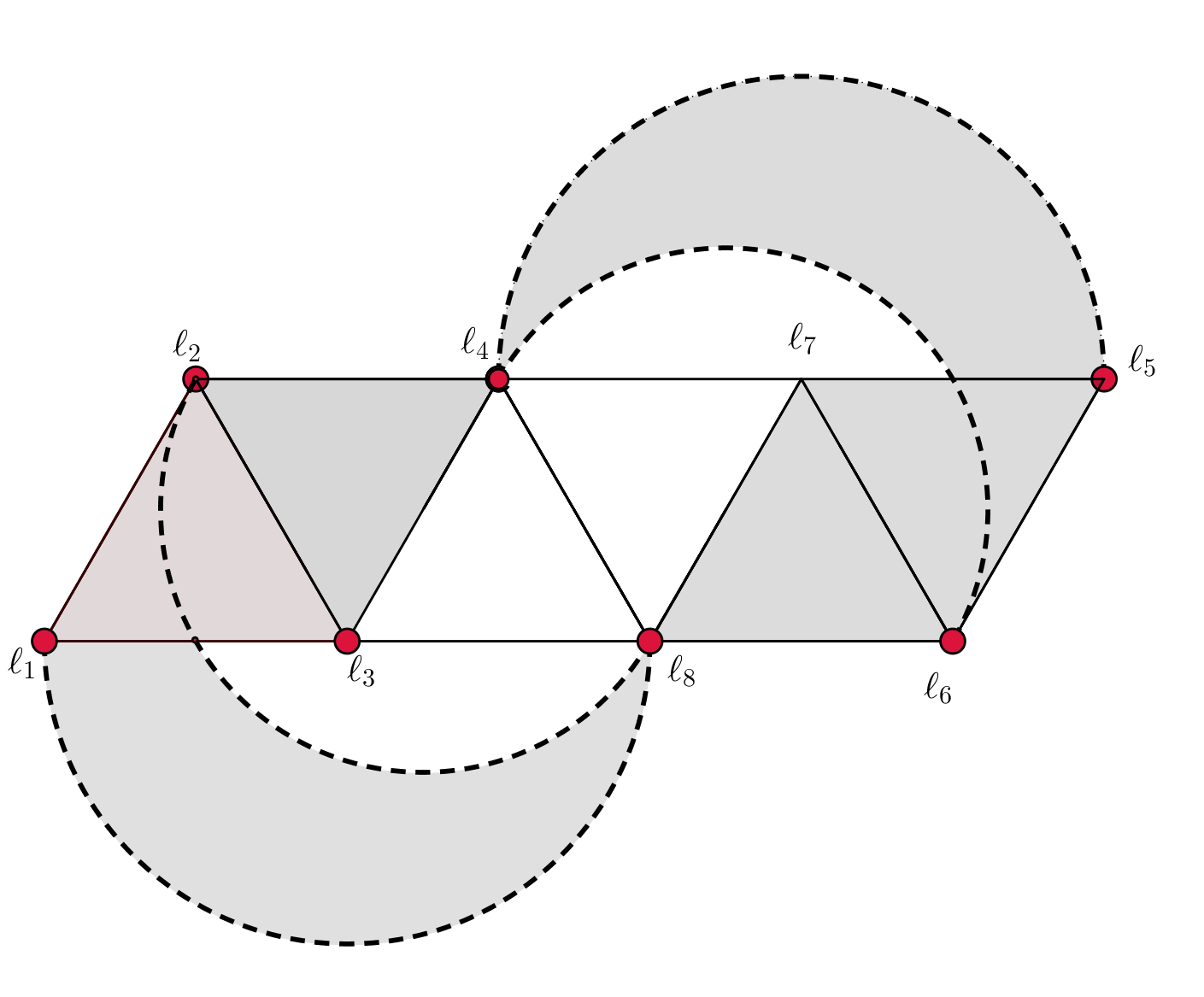}
\label{fig:OreA}
\caption{\textsc{left}: The dashed triangles are $S_5$ and $T_5$.  Were they both in $\Delta$, then one could relabel the vertices and create a weakly-Hamiltonian cycle (\textsc{right}).}
\end{center}
\end{figure}

Now there are two cases, both of which will result in a weakly-Hamiltonian cycle:  

\smallskip
\textsc{Case 1: } \emph{For some $i$, both $S_i$, $T_i$ are in $\Delta$.} 
We are going to introduce a new vertex labeling $\ell_1, \ldots, \ell_n$. The  ``consecutive facets of the new labeling'' will be called $L_1=\ell_1 \cdots \ell_d \ell_{d+1}$,  $L_2=\ell_2 \cdots \ell_{d+2}$, and so on. The following describes a weakly-Hamiltonian cycle:
\begin{compactitem}
\item Start with the first $i-1$ vertices in the same order: That is, set $\ell_1 \eqdef  1$, $\ldots$, $\ell_{i-1} \eqdef i-1$. Hence $L_1=H_1, L_2=H_2, \ldots, $, up until $L_{i-d-1}=H_{i-d-1}$, which (since $\Delta$ is traceable)  is the first of the $H_i$'s that contains the vertex $i-1$. 
\item Then set $L_{i} \eqdef T_i$. The vertices of $\tau$ are to be relabeled by $\ell_{i}, \ell_{i+1}, \ldots, \ell_{i+d}$: Specifically, label by $\ell_i$ the vertex that is in $H_{n-d}$ but not in $H_{n-d-1}$, by $\ell_{i+2}$ the vertex in  in $H_{n-d-1}$ but not in  in $H_{n-d-2}$, and so on. Facet-wise, we are traveling in reverse order across the \emph{last} facets of the original labeling. Stop until you get to relabel vertex $i$ by  $\ell_n$. (Or equivalently, if you prefer to think about facets, stop once you reach facet $H_i$.) 
\item The weakly-Hamiltonian cycle gets then concluded with $S_i$, which is adjacent to $L_1=H_1$ via $\sigma$. The facets previously called $H_{i-d}$, $H_{i-d+1}$, $\ldots$, $H_{i-1}$ are not part of the new weakly-Hamiltonian cycle.
\end{compactitem}

\smallskip
\textsc{Case 2: } \emph{For all $i$, at most one of $S_i$, $T_i$ is in $\Delta$.} Since the two sets 
$\{ i \in J : \sigma * i \in \Delta\}$ and $\{ i \in J : (i-1) * \tau \in \Delta\}$ are disjoint, the sum of their cardinalities is the cardinality of their union, which is contained in $J$. So
\begin{equation} \label{eq:Ore1}   |\{ i \in J : \sigma * i \in S\}| +   |\{ i \in J : (i-1) * \tau \in \Delta\}|  
\le \ | J | \; = \; n-2d+1.\end{equation}
Now, we claim that either $\sigma \ast n$ or  $1\ast \tau$ is a face of $\Delta$. From the claim the conclusion follows immediately, as such face creates a weakly-Hamiltonian cycle.  We prove the claim by contradiction. Suppose $\Delta$ contains neither  $\sigma \ast n$ nor  $1\ast \tau$. Every $d$-face containing $\sigma$ is of the form $\sigma \ast v$, where $v$ is either in $J$ or in the set $\{d, n-d+1, n-d+2, \ldots, n-1\}$ (which has size $d$).  So
\begin{equation} \label{eq:Ore2} d_{\sigma}  \le |\{ i \in J : \sigma * i \in \Delta\}|  \; +  d.\end{equation}
Symmetrically, the $d$-faces containing $\tau$ are of the form $w \ast \tau$, with $w$ either in $J$ or in the size-$d$ set $\{2, 3, \ldots, d, n-d+1\}$. 
So
\begin{equation} \label{eq:Ore3} d_{\tau}  \le |\{ i \in J : (i-1) * \tau \in \Delta\}| \; +  d.\end{equation}
 Putting together inequalities  \ref{eq:Ore1},  \ref{eq:Ore2} and \ref{eq:Ore3}, we reach a contradiction:
\[d_{\sigma}  + d_{\tau} \le  (n - 2d - 1) + d + d  = n-1. \qedhere \]
\end{proof}

\begin{corollary}\label{cor:Ore1}
Let $\Delta$ be a traceable $d$-dimensional  simplicial complex on $n$ vertices, $n > 2d$.
If for any two disjoint 
$(d-1)$-faces $\sigma$ and $\tau$ one has 
$d_{\sigma} + d_{\tau} \ge n$, then $\Delta$ is  weakly-Hamiltonian.  
\end{corollary}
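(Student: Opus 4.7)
The plan is to derive this corollary as an immediate consequence of Proposition~\ref{prop:Ore1}. That proposition requires a rather specific hypothesis: in some labeling making $\Delta$ traceable, the particular $(d-1)$-face $\sigma$ formed by the first $d$ vertices and the particular $(d-1)$-face $\tau$ formed by the last $d$ vertices satisfy $d_\sigma + d_\tau \ge n$. The corollary's assumption is much stronger, namely that the inequality $d_\sigma + d_\tau \ge n$ holds for \emph{every} pair of disjoint $(d-1)$-faces. So we only need to produce one such labeling and verify the two candidate faces are indeed disjoint $(d-1)$-faces of $\Delta$.

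First I would fix a labeling of the vertices of $\Delta$ that realizes traceability, so that $H_1, H_2, \ldots, H_{n-d}$ are all faces of $\Delta$. Set $\sigma \eqdef \{1, 2, \ldots, d\}$ and $\tau \eqdef \{n-d+1, n-d+2, \ldots, n\}$. Since $\sigma \subset H_1$ and $\tau \subset H_{n-d}$, both are genuine $(d-1)$-faces of $\Delta$. Because $n > 2d$, the largest element of $\sigma$ is $d < n-d+1$, the smallest element of $\tau$, so $\sigma$ and $\tau$ are disjoint.

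Now I would invoke the hypothesis of the corollary applied to this particular pair $(\sigma, \tau)$ of disjoint $(d-1)$-faces to conclude $d_\sigma + d_\tau \ge n$. This is exactly the numerical assumption of Proposition~\ref{prop:Ore1} in the chosen labeling. Applying that proposition yields a weakly-Hamiltonian cycle in $\Delta$, completing the proof.

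There is essentially no obstacle here beyond making sure that $\sigma$ and $\tau$ qualify as disjoint $(d-1)$-faces of $\Delta$, which uses nothing more than $n > 2d$ and the traceability labeling. The real content is already in Proposition~\ref{prop:Ore1}; the corollary just trades its labeling-specific hypothesis for a clean uniform one.
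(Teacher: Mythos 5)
Your proposal is correct and is precisely the intended derivation: the paper states Corollary~\ref{cor:Ore1} without proof as an immediate specialization of Proposition~\ref{prop:Ore1}, and your argument (fix a traceable labeling, check that the first-$d$ and last-$d$ vertex sets are disjoint $(d-1)$-faces of $\Delta$ using $n>2d$ and $\sigma\subset H_1$, $\tau\subset H_{n-d}$, then apply the hypothesis to that pair) supplies exactly the routine verification being elided.
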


\begin{corollary} \label{cor:Dirac}
Let $\Delta$ be a traceable  $d$-dimensional  simplicial complex on $n$ vertices, $n > 2d$. 
If every $(d-1)$-face of $\Delta$  belongs to  at least $\frac{n}{2}$
facets of $\Delta$, then $\Delta$ is  weakly-Hamiltonian. 
\end{corollary}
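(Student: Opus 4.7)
The plan is to reduce Corollary \ref{cor:Dirac} directly to Proposition \ref{prop:Ore1}. Fix a labeling $v_1,\ldots,v_n$ of the vertices that makes $\Delta$ traceable, so that $H_1,\ldots,H_{n-d}$ are all faces of $\Delta$. Let $\sigma$ be the $(d-1)$-face on the vertices $v_1,\ldots,v_d$ and $\tau$ the $(d-1)$-face on $v_{n-d+1},\ldots,v_n$. Both are genuinely faces of $\Delta$: indeed, $\sigma \subset H_1 \in \Delta$ and $\tau \subset H_{n-d} \in \Delta$. Since $n>2d$, these two $(d-1)$-faces are disjoint, so the hypotheses of Proposition \ref{prop:Ore1} are in place.

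The degree hypothesis gives $d_\sigma \ge n/2$ and $d_\tau \ge n/2$, hence
\[
d_\sigma + d_\tau \;\ge\; \tfrac{n}{2} + \tfrac{n}{2} \;=\; n.
\]
Proposition \ref{prop:Ore1} then yields that $\Delta$ is weakly-Hamiltonian, which is exactly the conclusion we want.

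There is essentially no obstacle here; the only thing to double-check is that $\sigma$ and $\tau$ really are faces of $\Delta$ (not merely of $\Sigma_n^d$), but this is automatic from traceability, since their supersets $H_1$ and $H_{n-d}$ lie in $\Delta$. No extremal-combinatorics machinery is invoked, and no assumption on the size of $n$ beyond $n>2d$ is needed, mirroring how Dirac's theorem follows from Ore's in the classical graph case.
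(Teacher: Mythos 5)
Your proof is correct and is exactly the reduction the paper intends: the corollary is stated without proof precisely because it follows immediately from Proposition \ref{prop:Ore1} by taking $\sigma$ and $\tau$ to be the first and last $d$ vertices of a traceable labeling and noting $d_\sigma+d_\tau\ge n$. Your check that $\sigma$ and $\tau$ are genuinely faces of $\Delta$ (being contained in $H_1$ and $H_{n-d}$) is the right detail to verify.
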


\begin{example} Let $n > 2d$. Let $\Delta_4$ be the simplicial complex on $n$ vertices obtained from $\Sigma^d_n$ by removing the interior of the 
$d$-faces $H_{n-d+1}$, $H_{n-d+2}$, $\ldots$, $H_{n}$. By construction $\Delta_4$ is traceable, but the given labeling (as well as any labeling obtained from it by reversing or cyclic shifting) fails to prove that $\Delta_4$ is weakly-Hamiltonian. Now, in $\Delta_4$, the 
$(d-1)$-faces $ \mu_i \eqdef H_i \cap H_{i+1}$, with  $i \in \{ n-d+1, n-d+2, \ldots, n-1\}$,  have degree $n-d-2$.  All other $(d-1)$-faces $\nu_j$ contained in one of $H_{n-d+1}, H_{n-d+2}, \ldots, H_n$ have degree $n-d-1$.  Finally, all $(d-1)$-faces \emph{not} contained in any of   $H_{n-d+1}, H_{n-d+2}, \ldots, H_n$ have degree $n-d$.  Therefore: 
 \begin{compactitem}
\item If $n \ge 2d +4$, Corollary \ref{cor:Dirac} tells us that $\Delta_4$ is weakly-Hamiltonian, because  $n - d - 2 \ge \frac{n}{2} $.
\item If $n=2d+3$ or $n=2d+2$, 
any two of the $\mu_i$'s are incident, and any $\nu_j$ is incident to all of the $\mu_i$'s. Hence, for any two \emph{disjoint} $(d-1)$-faces $\sigma$ and $\tau$, we do have
$d_{\sigma}  + d_{\tau} \ge 2n - 2d - 2 \ge n$.   So we can still conclude that $\Delta_4$ is weakly-Hamiltonian via Corollary \ref{cor:Ore1}. 

\item If $n=2d+1$, then the assumptions of Corollaries \ref{cor:Dirac} and \ref{cor:Ore1} are not met, but Proposition~\ref{prop:Ore1} is still applicable. In fact, for the facets $\sigma$ resp. $\tau$ formed by the first resp. the last vertices of the given labeling, one has 
$d_{\sigma}  + d_{\tau} = (n - d ) + (n-d-1) = 2n - (2d + 1) = n$.
\end{compactitem}
So in all cases, $\Delta_4$ is weakly-Hamiltonian. The proof of  Proposition \ref{prop:Ore1} also suggests a relabeling that works:
$\ell_1\eqdef 1,\; \ell_2 \eqdef 2, \,  \ldots \,, \; \ell_{d+1}\eqdef d+1,  \; \ell_{d+2}=n, \;  \ell_{d+3}\eqdef n-1,   \,  \ldots \,, \; \ell_{n}\eqdef d+2$.
\end{example}

To see in what sense Proposition \ref{prop:Ore1} is a higher-dimensional version of Ore's theorem \ref{thm:OreGraphs}, part (A), the best is to give a proof of the latter using the former:

\begin{proof}[\bf Proof of Ore's theorem \ref{thm:OreGraphs}, part (A)] By contradiction, let $G$ be a non-Hamiltonian graph satisfying $\deg u + \deg v \ge n$ for all non-adjacent vertices $u,v$. Add edges to it until you reach a \emph{maximal} non-Hamiltonian graph $G^*$. Since any further edge between the existing vertices would create a Hamiltonian cycle, $G^*$ is traceable, and obviously it still  satisfies $\deg u + \deg v \ge n$. 
By Proposition \ref{prop:Ore1} $G^*$ is (weakly-)Hamiltonian, a contradiction. 
\end{proof}

It is possible that the bound of Proposition \ref{prop:Ore1} can be improved. But in any case, the possible improvement could only be small, as the following construction shows.

\begin{nonexample} 
Let $d < m$ be positive integers. Take the disjoint union of two copies $A', A''$ of $\Sigma^d_{m}$. Let $\mu$ be any facet of $\Sigma^d_{m}$ and let $\mu', \mu''$ be its copies in $A'$ and $A''$, respectively. Glue to $A' \cup A''$ a triangulation without interior vertices of the prism $\mu \times [0,1]$, so that the lower face $\mu \times \{0\}$ is identified with $\mu'$, and the upper face  $\mu \times \{1\}$ is identified with $\mu''$. Let $\Delta_5$ be the resulting $d$-complex on $n = 2m$ vertices. This $\Delta_5$ is traceable: the added prism, triangulated as a path of $d$-faces, serves as ``bridge'' to move between the two copies of $\Sigma^d_m$. However, this bridge can only be traveled once, so $\Delta_5$ is not weakly-Hamiltonian. For the labeling that makes it  traceable, $d_{\sigma}+d_{\tau}=(m-d)+(m -d) = n - 2d$. 
\end{nonexample}

Our next result is a ``P\'osa--type'' result, in the sense that it extends most of Nash--Williams' proof \cite{NashWilliams} of P\'osa's theorem \cite{Posa} to all dimensions. We focus on complexes $\Delta$ with the property that any labeling that makes them weakly-traceable, makes them also traceable. Such class is nonempty: for example, it contains all $1$-dimensional complexes and all trees of $d$-simplices (i.e. all triangulations of the $d$-ball whose dual graph is a tree).

\begin{proposition} \label{prop:PosaD} Let $\Delta$ be any traceable pure $d$-complex on $n$ vertices, $n > 2d$. Suppose that any labeling that makes $\Delta$ weakly-traceable makes it also traceable.

Let $\sigma_1, \sigma_2, \ldots, \sigma_{s}$ be an ordering of the $(d-1)$-faces of  $\Delta^*$, 
such that the respective degrees $d_i \eqdef d_{\sigma_i}$ are weakly-increasing, $d_1 \le d_2 \le \ldots \le d_{s}$. 
If for every $d \le  k < \frac{n}{2}$ one has  $ d_{k-d+1} >k$,
then $\Delta$ is weakly-Hamiltonian.
\end{proposition}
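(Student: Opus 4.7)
The plan is to mimic Nash--Williams's rotation proof of the classical (graph) P\'osa theorem, using Proposition~\ref{prop:Ore1} as the closing tool, and exploiting the hypothesis ``weakly-traceable $\Longrightarrow$ traceable'' to make the rotation machinery work in dimension $d$.

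Assume by contradiction that $\Delta$ is not weakly-Hamiltonian. Among all traceable labelings of $\Delta$, fix $\lambda_0 = v_1, \ldots, v_n$ minimizing $\min(d_\sigma, d_\tau)$ over the initial and final ridges $\sigma, \tau$; reversing if needed, assume $d_\sigma \le d_\tau$, and set $k := d_\sigma$. Since Proposition~\ref{prop:Ore1} must fail for $\lambda_0$, we have $d_\sigma + d_\tau < n$ and hence $k < n/2$. The P\'osa hypothesis at the index $k = d$ gives $d_1 > d$, so $k \ge d_1 \ge d+1$; therefore $d < k < n/2$, and the hypothesis $d_{k-d+1} > k$ applies: at most $k-d$ ridges of $\Delta$ have degree $\le k$. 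This is the count we aim to contradict.

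The main construction is a \emph{higher-dimensional rotation}. Let $A := \{i : \sigma \ast v_i \in \Delta\}$, so $|A| = k$ and $d+1 \in A$ via $H_1 = \sigma \ast v_{d+1}$. For each $i \in A$ with $d+2 \le i \le n-d$, define a new labeling $\lambda_i$ by reversing the first $i-1$ positions of $\lambda_0$: $\ell_j := v_{i-j}$ for $j \le i-1$, and $\ell_j := v_j$ for $j \ge i$. Direct inspection of consecutive $(d+1)$-tuples shows that the list (of $d$-faces in $\lambda_i$, named in the old labeling)
\[
L_1 = H_{i-d-1}, \; L_2 = H_{i-d-2}, \; \ldots, \; L_{i-d-1} = H_1, \; L_{i-d} = \sigma \ast v_i, \; L_i = H_i, \; \ldots, \; L_{n-d} = H_{n-d}
\]
lies in $\Delta$, covers every vertex, and consists of consecutively incident faces. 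Hence $\lambda_i$ makes $\Delta$ weakly-traceable; by the hypothesis of the Proposition, $\lambda_i$ actually makes $\Delta$ traceable. Its new initial ridge is $\sigma^{(i)} := \{v_{i-d}, \ldots, v_{i-1}\}$, and as $i$ ranges over $A$ the ridges $\sigma^{(i)}$ are pairwise distinct (with $\sigma^{(d+1)} = \sigma$), yielding $|A| = k$ distinct initial ridges of traceable labelings of $\Delta$.

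The closing step is the Nash--Williams degree accounting. By the minimality of $k$, each $\sigma^{(i)}$ satisfies $d_{\sigma^{(i)}} \ge k$. To obtain the matching upper bound $d_{\sigma^{(i)}} \le k$, one iterates the rotation starting from $\lambda_i$ (again bootstrapped from weakly-traceable to traceable by the hypothesis), and combines the resulting ``reachable'' initial ridges with Proposition~\ref{prop:Ore1} to force that exceeding $k$ would either produce a weakly-Hamiltonian cycle or contradict the minimality of $\lambda_0$. Once $d_{\sigma^{(i)}} = k$ for each $i \in A$, the $k$ distinct ridges all have degree $\le k$, contradicting the P\'osa bound of at most $k - d < k$ such ridges. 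The main obstacle is precisely this upper bound: in dimension $d \ge 2$, unlike for graphs, the rotation leaves ``middle'' $d$-faces $L_{i-d+1}, \ldots, L_{i-1}$ not a priori in $\Delta$, and it is the hypothesis ``weakly-traceable $\Longrightarrow$ traceable'' that fills these gaps and enables the Nash--Williams iteration in higher dimension to close the argument.
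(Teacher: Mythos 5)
Your overall strategy is the right one --- the same P\'osa-rotation-plus-Ore scheme the paper uses, and your explicit rotation $\lambda_i$ (reverse the first $i-1$ positions) is exactly the paper's ``P\'osa flip'': it produces a weakly-traceable labeling with new initial ridge $\rho_i=\{v_{i-d},\ldots,v_{i-1}\}$ and the \emph{same} final ridge $\tau$, and the hypothesis upgrades it to a traceable labeling. But there is a genuine gap at the decisive step, and it is caused by your choice of extremal labeling. You fix $\lambda_0$ \emph{minimizing} $\min(d_\sigma,d_\tau)$; minimality then only gives you the lower bound $d_{\sigma^{(i)}}\ge k$ for the rotated ridges, which points in the wrong direction --- the counting contradiction needs many ridges of degree \emph{at most} $k$. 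You acknowledge this and propose to recover the upper bound $d_{\sigma^{(i)}}\le k$ by ``iterating the rotation and combining with Proposition~\ref{prop:Ore1}'', but this is precisely the heart of the proof and is nowhere carried out; as stated it is an assertion, not an argument, and it is not clear how iterated rotations from your extremal choice would force it.

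The fix is to change the extremal choice: among all labelings that make $\Delta$ weakly-traceable (hence traceable), \emph{maximize} $d_\sigma+d_\tau$. If the maximum is $\ge n$, Proposition~\ref{prop:Ore1} finishes; otherwise WLOG $k:=d_\sigma<n/2$, and since each rotated labeling $\lambda_i$ is again weakly-traceable with ridges $(\rho_i,\tau)$, maximality immediately gives $d_{\rho_i}+d_\tau\le d_\sigma+d_\tau$, i.e.\ $d_{\rho_i}\le k$ in one line. One also needs the side case that some $\rho_i\ast v_n\in\Delta$ (which closes a weakly-Hamiltonian cycle directly); your sketch does not separate this out. Finally, note that the rotation is only available for $i\in A\cap\{d+2,\ldots,n-d\}$, so you get at least $k-d$ rotated ridges rather than the $k$ you claim; together with $\sigma$ itself this still yields $k-d+1$ ridges of degree $\le k$, which is exactly what is needed to contradict $d_{k-d+1}>k$.
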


\begin{proof} Among all possible labelings that make  $\Delta$ weakly-traceable (and thus traceable, by assumption), choose one that maximizes $d_{\sigma} + d_{\tau}$, where $\sigma$ is the $(d-1)$-face of $H_1$ spanned by the first $d$ vertices (that is, $1,2,\cdots d$) and $\tau$ is the  $(d-1)$-face of $H_{n-d}$ spanned by the last $d$ vertices (that is, $n-d+1, \ldots, n$). Since $n>2d$, the faces $\sigma$ and $\tau$ are disjoint. 
If $d_{\sigma} + d_{\tau} \ge  n$, using the proof of Proposition \ref{prop:Ore1}  we get that $\Delta$ is weakly-Hamiltonian, and we are done.  If not, then one of $\sigma$, $\tau$ has degree  $< \frac{n}{2}$. Up to reversing the labeling, which would swap $\sigma$ and $\tau$, we can assume that $d_{\sigma} < \frac{n}{2}$.
Now let $J \eqdef \{d+2, d+3, \ldots, n-d\}.$
For every $i$ in $J$, which has cardinality $n-2d-1$, consider the two $d$-faces of $\Sigma^d_n$ 
\[ S_i \eqdef \sigma \ast i \quad \textrm{ and } \quad T_i \eqdef (i-1) \ast \tau.\] 
We may assume that at most one of these two faces is in $\Delta$, otherwise a weakly-Hamiltonian cycle arises, exactly as in the proof of Proposition \ref{prop:Ore1}. Now for each $i$ in  $J_1 \eqdef \{ i \in J \: : \:  \sigma \ast i \in \Delta\}$, consider the $(d-1)$-face $\rho_i$ with vertices $\{ i-d, i-d+1, \ldots, i-1\}$.

\begin{figure}[htbp] 
\begin{center}
\includegraphics[scale=0.63]{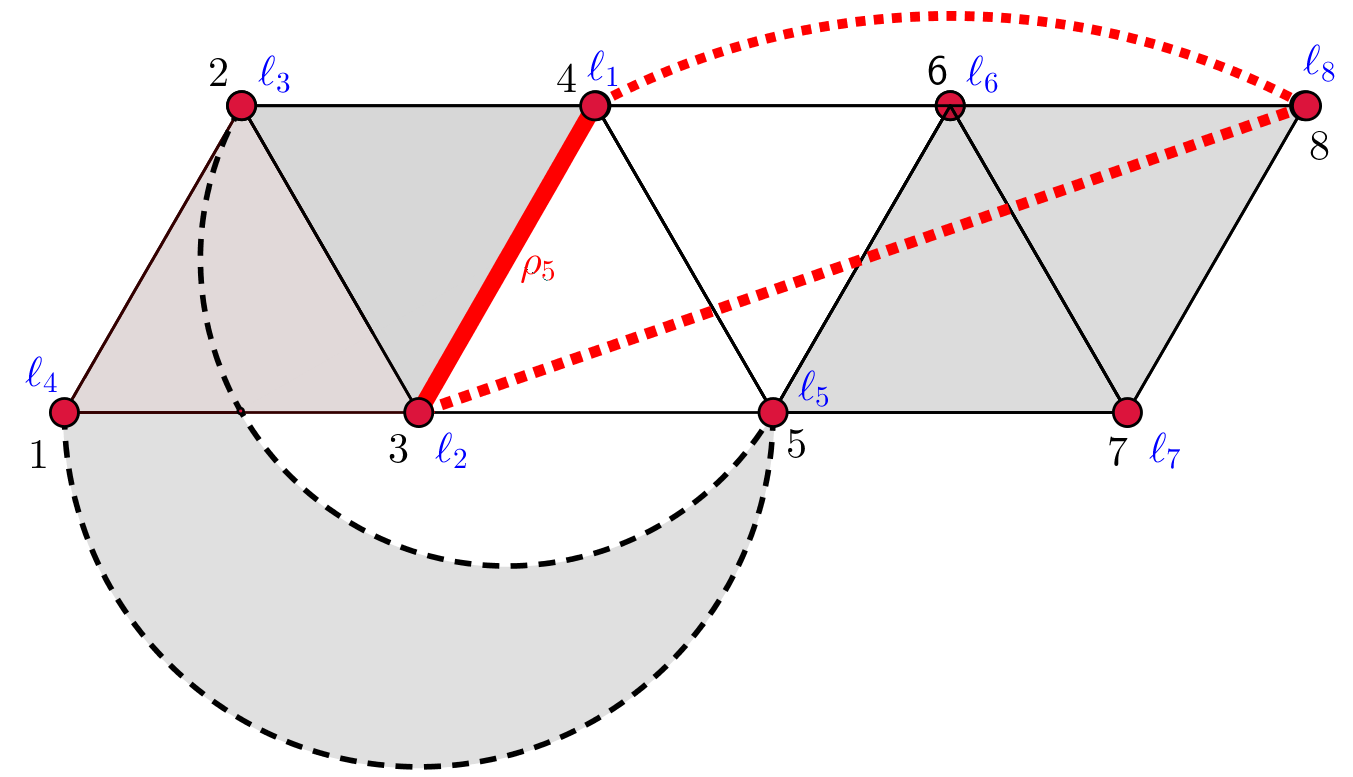} 
\caption{A higher-dimensional ``P\'osa flip'': Since $125$ is in $\Delta$, the vertex $5$ is in $J_1$. Now the red triangle $\rho_5 \ast w = 348$ cannot be in $\Delta$, or else we would have a weakly-Hamiltonian cycle with the blue labeling. The blue labeling makes $\Delta$ weakly-traceable, with $\rho_5$ playing  the role of the ``first'' $(d-1)$-face; the ``last'' $(d-1)$-face is the same as before. By how our original labeling was chosen,  $d_{\rho_5} \le d_{\sigma} < \frac{n}{2}$.}
\label{fig:Posa}
\end{center}
\end{figure}

If for some $i$ in $J_1$ the $d$-face $\rho_i \ast w$ is in $\Delta$, then there is a new relabeling $\ell_1, \ldots, \ell_n$ of the vertices for which we have a weakly-Hamiltonian cycle: see Figure \ref{fig:Posa} above. (The proof is essentially identical  to that of Proposition \ref{prop:Ore1}, up to replacing $T_i$ with $T'_i \eqdef \rho_i \ast w$, reversing the order, and permuting it cyclically, so that $\rho_i$ is the first face.) So also in this case, we are done.

It remains to discuss the case in which for all $i \in  J_1 \eqdef \{ i \in J \: : \:  \sigma \ast i \in \Delta\}$, the $d$-face $\rho_i \ast w$ is not in $\Delta$. In this case the relabeling $\ell_1, \ldots, \ell_n$ introduced above makes $\Delta$ weakly-traceable, and thus traceable by assumption. For such relabeling, the $(d-1)$-faces spanned by the first and the last $d$ vertices are $\rho_i$ and $\tau$, respectively.  So by the way  our original labeling was chosen, $d_{\rho_i} + d_{\tau} \le d_{\sigma} + d_{\tau}$, and in particular
\[d_{\rho_i} \le d_{\sigma} < \frac{n}{2}.\]  
Now, any $d$-face containing $\sigma$ is of the form $\sigma \ast v$, where $v$ is either in the set $J_1$ or in the set $Z \eqdef \{ d+1, n-d+1, \ldots, n-1\}$, which has cardinality $d$. So $d_{\sigma} \le |J_1 \cup Z |$. Since $J$ and $Z$ are disjoint, and $J_1 \subset J$, the sets $J_1$ and $Z$ are also disjoint and we have
\[d_{\sigma}-d = d_{\sigma}- |Z| \le  |J_1 \cup Z | - |Z| = |J_1| + |Z | - |Z| = | J_1 |.\] 
So the set $\{\rho_i \: : \: i \in J_1\}$ contains at least $d_{\sigma} - d\,$ faces of dimension $d-1$ and degree $\le d_{\sigma}$. If we count also $\sigma$, we have in $\Delta$ at least $d_{\sigma} - d +1\,$ faces of dimension $d-1$ and degree $\le d_{\sigma}$. But then, setting $k \eqdef d_{\sigma}$, we obtain
\[ d_{k-d+1} \le k < \frac{n}{2},\]
which contradicts the assumption.
\end{proof}

Again, to see in what sense Proposition \ref{prop:PosaD} is a higher-dimensional version of P\'osa's Theorem~\ref{thm:PosaGraphs}, perhaps the best is to see how easily the latter follows from the former:

\begin{proof}[\bf Proof of P\'osa's theorem \ref{thm:PosaGraphs}] By contradiction, if $G$ is not Hamiltonian, we can add edges to it until we reach a maximal non-Hamiltonian graph $G^*$, which still  satisfies the degree conditions and is traceable.  By Proposition \ref{prop:PosaD}, $G^*$ is (weakly-)Hamiltonian, a contradiction.
\end{proof}

A natural question is whether one can generalize to higher dimensions also part (B) of Ore's theorem \ref{thm:OreGraphs}. The answer is positive, although some extra work is required. In fact, for graphs part (B) of Ore's theorem can be quickly derived from part (A) by means of a coning trick. 
This trick however does not extend to higher dimensions, as we explained in Remark \ref{rem:trouble}, so we'll have to take a long detour, which makes the proof three times as long. The bored reader may skip directly to the next section.

\begin{definition} \label{def:quasi-traceable}
A $d$-dimensional complex $\Delta$ is \emph{quasi-traceable} if there exists a vertex labeling for which $\Delta \cup H_j$ is weakly-traceable, and moreover, with respect to the same labeling,
\begin{compactenum}[ (a) ]
\item if $j=1$, then $\Delta$ contains  all of $H_{2}, \ldots, H_{n-d}$ (i.e., $\Delta \cup H_1$ is traceable);
\item if $j \in \{2, \ldots, n-2d\}$, then $\Delta$ already contains all of $H_1, \ldots, H_{j-1}$ and $H_{j+d}, \ldots, H_{n-d}$ (i.e., $\Delta \cup H_j \cup \ldots \cup H_{j+d-1}$ is traceable);
\item if $j \in \{n-2d+1, \ldots, n-d-1\}$, then $\Delta$ contains all of $H_1, \ldots, H_{j-1}$ and also $H_{n-d}$ (i.e., $\Delta \cup H_j \cup \ldots \cup H_{n-d-1}$ is traceable);
\item if $j=n-d$, then $\Delta$ already contains all of $H_1, \ldots, H_{n-d-1}$  (i.e., $\Delta \cup H_{n-d}$ is traceable).
\end{compactenum}
\end{definition}

\begin{example} The complex $\Delta_6=123, 234, 567, 678, 789$ is quasi-traceable, although not weakly-traceable. In fact, $\Delta_6$ becomes weakly-traceable if we add one of the facets $345$ and $456$, and it becomes even traceable if we add both. 
\end{example}

Definition \ref{def:quasi-traceable} allows the ``added faces'' to be already present in $\Delta$. In particular, all traceable complexes are quasi-traceable.
Here comes our high-dimensional version of Theorem \ref{thm:OreGraphs}, part (B):

\begin{proposition}\label{prop:Ore2}
Let $\Delta$ be a quasi-traceable  $d$-dimensional  simplicial complex on $n$ vertices, $n > 2d$. 
If in some labeling that makes $\Delta$ quasi-traceable the two $(d-1)$-faces $\sigma$ and $\tau$ formed by the first $d$ and the last $d$ vertices  satisfy 
$d_{\sigma} + d_{\tau} \ge n-1$, then $\Delta$ is  weakly-traceable. 
\end{proposition}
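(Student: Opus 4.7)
My plan is to follow the proof of Proposition~\ref{prop:Ore1} closely, adapting it to the quasi-traceable setting, to the weaker degree bound $d_\sigma+d_\tau\ge n-1$, and to the weaker conclusion (weakly-traceable path instead of weakly-Hamiltonian cycle). Fix a labeling making $\Delta$ quasi-traceable with gap index $j$ as in Definition~\ref{def:quasi-traceable}, and set $J=\{d+2,\ldots,n-d\}$, $S_i=\sigma\ast i$, $T_i=(i-1)\ast\tau$ for each $i\in J$. I split into two cases.

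In Case~1 (both $S_i$ and $T_i$ lie in $\Delta$ for some $i\in J$), I apply the relabeling recipe of Proposition~\ref{prop:Ore1} but stop before closing the cycle. The resulting sequence $L_1,\ldots,L_{n-d}$ is a weakly-traceable path for $\Delta$, provided $i$ is chosen so that the potentially missing faces $H_j,\ldots,H_{j+d-1}$ are not among the tight faces required by the construction; for each of the cases (a)--(d) of Definition~\ref{def:quasi-traceable} such a choice is possible by a small ad hoc verification.

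In Case~2 (for every $i\in J$ at most one of $S_i,T_i$ belongs to $\Delta$), the double-count of Proposition~\ref{prop:Ore1} yields
\[
d_\sigma+d_\tau\ \le\ (n-2d-1)+2d+\mathbf{1}[\sigma\ast n\in\Delta]+\mathbf{1}[1\ast\tau\in\Delta].
\]
When the gap sits at an endpoint (cases (a) or (d)), one of $H_1$ or $H_{n-d}$ is absent from $\Delta$ and the summand $d$ on the right-hand side sharpens to $d-1$; combined with $d_\sigma+d_\tau\ge n-1$ this forces $\sigma\ast n\in\Delta$ or $1\ast\tau\in\Delta$. A cyclic relabeling (for instance $\ell_1:=n,\ \ell_2:=1,\ldots,\ell_n:=n-1$ when $\sigma\ast n\in\Delta$) converts this face into the new first tight face $L_1$, and together with the old tight faces $H_2,\ldots,H_{n-d-1}$ it forms the desired weakly-traceable path. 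When the gap sits in the interior (cases (b) or (c)), the counting alone is insufficient and one must instead build a bridge across the gap using one of $S_i,T_i$ together with the surviving segments $H_1,\ldots,H_{j-1}$ and $H_{j+d},\ldots,H_{n-d}$.

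The main obstacle is the interior case: there the degree-count refinement available at the endpoints is not available, and one has to patch the gap through a careful choice of $i\in J$ supplying either $S_i$ or $T_i$, and to induce from it a relabeling that moves the gap outside the used range of tight faces. This case-by-case bookkeeping, together with the split on $j\in\{1,\ldots,n-d\}$, accounts for the promised increase in the length of the proof relative to that of Proposition~\ref{prop:Ore1}.
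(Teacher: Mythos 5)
Your overall architecture (a case split on the position $j$ of the gap, the $S_i/T_i$ dichotomy, and a degree count in the ``at most one of $S_i,T_i$'' branch) matches the paper's, and your treatment of the endpoint cases $j=1$ and $j=n-d$ is essentially the paper's argument read contrapositively: there the absence of $H_1$ (resp.\ $H_{n-d}$) together with the exclusion of $\sigma\ast n$ (resp.\ $1\ast\tau$) tightens the count to $d_\sigma+d_\tau\le n-2$, which is exactly your ``sharpening to $d-1$''. Note also that the case $j\in\{n-2d+1,\ldots,n-d-1\}$ needs no bridge at all: $H_{j-1}$ already meets $H_{n-d}$, so the faces $H_1,\ldots,H_{j-1},H_{n-d}$ present in $\Delta$ form a weakly-Hamiltonian path outright.

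The genuine gap is the interior case $j\in\{2,\ldots,n-2d\}$ when no $i$ has both $S_i$ and $T_i$ in $\Delta$. You assert that ``the counting alone is insufficient and one must instead build a bridge across the gap using one of $S_i,T_i$'' --- but a single $S_i$ (or a single $T_i$) cannot produce a weakly-traceable path: the jump-out-and-jump-back manoeuvre needs both. In fact counting \emph{does} settle this case, once one makes the key observation you are missing: $\sigma\ast(d+j)$ and $(d+j-1)\ast\tau$ cannot be facets of $\Delta$, since either one would immediately splice the two half-paths $H_1,\ldots,H_{j-1}$ and $H_{j+d},\ldots,H_{n-d}$ into a weakly-Hamiltonian path. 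Hence the index $d+j$ must be deleted from $J$, the relevant index set $J'$ has cardinality $n-2d-2$ rather than $n-2d-1$, and the disjointness count gives $d_\sigma+d_\tau\le 2d+|J'|=n-2$, contradicting the hypothesis. Relatedly, your Case~1 recipe (``the Ore cycle minus the closing face'') does not work as stated in the interior case: the cycle of Proposition~\ref{prop:Ore1} omits only the single window $H_{i-d},\ldots,H_{i-1}$, so matching it to the missing window $H_j,\ldots,H_{j+d-1}$ would force $i=j+d$, precisely the excluded index. The correct construction is a path that starts at $H_{j+d}$, runs to $H_{n-d}$, jumps back via $T_i$, traverses $H_{i-d-1},\ldots,H_1$ in reverse, jumps forward via $S_i$, and ends at $H_{j-1}$, thereby skipping two windows instead of one.
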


\begin{proof} By contradiction, suppose $\Delta$ is not weakly-traceable; we treat the four cases  of Definition \ref{def:quasi-traceable} separately.

\textbf{Case (a)} is symmetric to Case (d), so we will leave it to the reader.
\smallskip

\textbf{Case (b)} is the main case. Since $j \in \{2, \ldots, n-2d\}$, by definition $\Delta$ contains all of $H_1, \ldots, H_{j-1}$ and also $H_{j+d}, \ldots, H_{n-d}$. Since $\Delta$ is not weakly-traceable, it does not contain $H_j$. Moreover, $\sigma \ast (d+j)$ cannot be a facet of $\Delta$, otherwise  the two ``halfpaths'' above would be connected into a weakly-Hamiltonian path. For the same reason, since $(d+j -1) \in H_{j-1}$, the $d$-face $(d+j-1) \ast \tau$ cannot be in $\Delta$. So let 
$J' \eqdef \{d+2, d+3, \ldots, n-d\} \setminus \{ d+j\}$. For every $i$ in $J'$, which has cardinality $n-2d-2$, consider the two $d$-faces of $\Sigma^d_n$
\[ S_i \eqdef \sigma \ast i \quad \textrm{ and } \quad T_i \eqdef (i-1) \ast \tau.\] 

Now there are two subcases:  Either there exists an $i$ such that $S_i, T_i$ are both in $\Delta$, or not.

\smallskip 
\textsc{Case} (b.1):  \emph{For some $i$, both $S_i$ and $T_i$ are in $\Delta$.}   
There are two subsubcases, according to whether $i$ is ``before the gap''  or ``after the gap''. 

\begin{itemize}
\item[ -- ] \textsc{Case} (b.1.1): $i<d+j$.
A weakly-Hamiltonian path arises from a relabeling as follows: We start at the beginning of the second halfpath, with the facets previously called $H_{j+d}, H_{j+d+1}, $ etc., until we reach $H_{n-d}$. Then we use $T_i$ to get back to the vertex previously labeled by $i-1$. Next, we use in reverse order the facets previously called $H_{i-d-1}, H_{i-d-2}, \ldots, H_2, H_1$. Finally  use $S_i$ to jump forward to the vertex previously called $i$, and conclude the path with the facets previously called $H_i, H_{i+1}, \ldots, H_{j-1}$.

\item[ -- ] \textsc{Case} (b.1.2): $i > d+j$.
A weakly-Hamiltonian path arises from a relabeling as follows: We start at the beginning of the second halfpath, with the facets previously called $H_{j+d}, H_{j+d+1}, $ etc., until $H_{i-d-1}$. Then we use $T_i$ to jump forward. As next faces, we use in reverse order the facets previously called $H_{n-d}, H_{n-d-1}, \ldots, H_2, H_i$. Finally, we use $S_i$ to jump back to $H_1$, and conclude the path with the facets previously called $H_1, H_{2}, \ldots, H_{j-1}$. So also in this case $\Delta$ is weakly-traceable, a contradiction. 
\end{itemize}

\textsc{Case} (b.2):  \emph{For all $i$, at most one of $S_i$ and $T_i$ is in $\Delta^*$.} Since the two sets 
$\{ i \in J' : \sigma * i \in \Delta\}$ and $\{ i \in J' : (i-1) * \tau \in \Delta\}$ are disjoint, we obtain a numerical contradiction:
\[ 
\begin{array}{ll}
d_{\sigma} + d_{\tau} &\le d + \;  |\{ i \in J' : \sigma * i \in \Delta\}| \: + d +  \:  | \{ i \in J' : (i-1) * \tau \in \Delta\}| =\\ 
&=  2d + \;  |\{ i \in J' : \sigma * i \in \Delta\} \cup \{ i \in J' : (i-1) * \tau \in \Delta\}| \le \\ 
 &\le 2d + | J' | \; = \; 2d  + n - 2d - 2 \; = \; n-2. \end{array} 
\]

\textbf{Case (c)} is the easiest. If $j \in \{n-2d+1, \ldots, n-d-1\}$, then  $H_{j-1}$ intersects $H_{n-d}$. Since $\Delta$ contains $H_1, \ldots, H_{j-1}$ and also $H_{n-d}$, it is weakly-traceable, a contradiction.
\smallskip

\textbf{Case (d)} is the last one.  So, assume $j=1$ and set
$J'' \eqdef \{d+2, d+3, \ldots, n-d\}$. For every $i$ in $J''$, which has cardinality $n-2d-1$, consider the two $d$-faces of $\Sigma^d_n$
\[ S_i \eqdef \sigma \ast i \quad \textrm{ and } \quad T_i \eqdef (i-1) \ast \tau.\] 
Now there are two subcases:  Either there exists an $i$ such that $S_i, T_i$ are both in $\Delta$, or not.

\bigskip 
\textsc{Case} (d.1):  \emph{For some $i$, both $S_i$ and $T_i$ are in $\Delta$.} Then we obtain a weakly-Hamiltonian path as follows: Starting with $\ell_1=1$, first we use the face $\sigma \ast i$, then $H_2, \ldots, H_{i-d-1}$ in their order, then we use $(i-1)\ast n$ to jump forward, and then we come back with $H_{n-d}, \ldots, H_{i}$. 

\smallskip
\textsc{Case} (d.2):  \emph{For all $i$, at most one of $S_i$ and $T_i$ is in $\Delta$.} We know by that $\sigma \ast d=H_1$ is not in $\Delta$ because we are treating the case $j=1$, and we know that $\sigma \ast n$ is not in $\Delta$ otherwise we would have a weakly-Hamiltonian path. Thus any $d$-face containing $\sigma$ is of the form $\sigma \ast v$, where $v$ is either in $J''$ or in the disjoint set $\{n-d+1, \ldots, n-1\}$, which has cardinality $d-1$. In contrast, any $d$-face containing $\tau$ is of the form $(i-1) \ast \tau$, where $i$ is either in $J''$ or in the set $\{2, \ldots, d+1\}$, which has cardinality $d$.  Since the two sets 
$\{ i \in J'' : \sigma * i \in \Delta\}$ and $\{ i \in J'' : (i-1) * \tau \in \Delta\}$ are disjoint, the sum of their cardinality is equal to the cardinality of their union, which is a subset of $J''$. So also in this case we obtain a contradiction
\[ 
\begin{array}{ll}
d_{\sigma} + d_{\tau} &\le d - 1 + \;  |\{ i \in J'' : \sigma * i \in \Delta\}| \: + d +  \:  | \{ i \in J'' : (i-1) * \tau \in \Delta\}| =\\ 
&=  2d - 1  + \;  |\{ i \in J'' : \sigma * i \in \Delta\} \cup \{ i \in J'' : (i-1) * \tau \in \Delta\}| \le \\ 
 &\le 2d - 1 + | J'' | \; = \; 2d -1  + n - 2d - 1 \; = \; n-2.  \end{array} \qedhere
\]
\end{proof}

\begin{example}
Let $\Delta_7$ be the simplicial complex on $5$ vertices obtained from $\Sigma^2_5$ by removing the interior of the two  triangles $123$ and $124$. Clearly $\Delta_7$ is quasi-traceable with $j=1$, because $\Delta_7 \cup H_1$ is traceable. 
Since $d_{12} + d_{45} = 4 = n-1$,
by Proposition \ref{prop:Ore2}  $\Delta_7$ is weakly-traceable.  In fact, the reader may verify that $\Delta_7$ is even Hamiltonian with the relabeling $\ell_1=1$, $\ell_2=2$, $\ell_3=5$, $\ell_4=3$, $\ell_5=4$.   
\end{example}

For completeness, we conclude this section by showing how Proposition \ref{prop:Ore2} implies part (B) of Ore's theorem \ref{thm:OreGraphs}:

\begin{proof}[\bf Proof of Ore's theorem \ref{thm:OreGraphs}, part (B)] By contradiction, let $G$ be a non-traceable graph satisfying $\deg u + \deg v \ge n-1$ for all non-adjacent vertices $u,v$. Add edges to it until we reach a maximal non-traceable graph $G^*$. This $G^*$ is quasi-traceable and  still  satisfies $\deg u + \deg v \ge n-1$. By  Proposition \ref{prop:Ore2} $G^*$ is (weakly-)traceable, a contradiction.
\end{proof}

\newpage

\section{Interval graphs and semiclosed complexes}

In the present section, 
\begin{compactenum}[ (1) ]
\item we introduce ``weakly-closed $d$-complexes'', generalizing co-comparability graphs; 
\item we create a hierarchy of properties between closed and weakly-closed complexes, among which a $d$-dimensional generalization of interval graphs; and
\item we connect such hierarchy to traceability and chordality.  
\end{compactenum}

\subsection{A foreword on interval graphs and related graph classes}
\textsc{Interval graphs} are the intersection graphs of intervals of $\mathbb{R}$. They have long been studied in combinatorics, since the pioneering papers by Lekkerkerker--Boland \cite{LekBol} and Gilmore--Hoffman \cite{GilmoreHoffman}, and have a tremendous amount of applications; see e.g.~\cite[Ch.~8, Sec.~4]{Golumbic} for a survey. \textsc{Unit-interval graphs}, also known as  ``indifference graphs'' \cite{Roberts} or ``proper interval graphs'', 
are the intersection graphs of \emph{unit} intervals, or equivalently, the intersection graphs of sets of intervals no two of which are nested.
The claw $K_{1,3}$ is the classical example of a graph that can be realized as intersection of four intervals, three of which contained in the forth; but it  cannot be realized as intersection of unit intervals. 

Bertossi noticed in 1983 that connected unit-interval graphs are traceable \cite{Bertossi}, whereas connected interval graphs in general are not: The claw strikes. All $2$-connected unit-interval graphs are Hamiltonian \cite{ChChCh}\cite{PandaDas}; again, this does not extend to $2$-connected interval graphs.  
That said, for interval graphs (and even co-comparability graphs, see below for the definition) the Hamiltonian Path Problem and the Longest Path Problem can be solved in polynomial time \cite{DeogunSteiner} \cite{MerCor}, whereas for arbitrary graphs both problems are well known to be NP-complete, cf.~\cite{Karp}.

Given a  finite set of intervals in the horizontal real line, we can swipe them ``left-to-right'', and thus order them by increasing left endpoint. This so-called ``canonical labeling'' of the vertices of an interval graph obviously satisfies 
the following property: for all $a<b<c$, 
\begin{equation} \label{eq:Interval} ac \in G  \Longrightarrow ab \in G.\end{equation}
This ``under-closure'' is a characterization: It is easy to prove by induction that any graph with $n$ vertices labeled so that (\ref{eq:Interval}) holds can be realized as the intersection graph of $n$ intervals. This result was first discovered by Olario, cf.~\cite[Proposition 4]{LO}.

There is a ``geometrically dual argument'' to the one above: Given a finite set of intervals in $\RR$, we could also swipe them right-to-left, thereby ordering the intervals by decreasing right endpoint. This yields a vertex labeling that again satisfies (\ref{eq:Interval}), for the same geometric reasons. In general, since some of the intervals may be nested,  this ``dual labeling'' bears no relation with the canonical one. But if we start with a finite set of \emph{unit} intervals, then the dual labeling is simply the reverse of the canonical labeling. Thus in \emph{unit-interval} graphs, not only the canonical labeling is under-closed, but also its reverse is. Or equivalently, in unit-interval graphs, the canonical labeling is closed `both below and above': in mathematical terms, for all $a<b<c$, 
\begin{equation} \label{eq:UnitInterval} ac \in G  \Longrightarrow ab, bc  \in G.\end{equation}
Again, it is not difficult to prove by induction that any graph with $n$ vertices, labeled so that (\ref{eq:UnitInterval}) holds, can be realized as the intersection graph of $n$ unit intervals  \cite[Theorem 1]{LO}; see Gardi \cite{Gardi} for a computationally-efficient construction.

Recently Herzog et al.~\cite{HerzogEtAl, EneEtAl} rediscovered unit-interval graphs from an algebraic perspective, which will be discussed in the next chapter. They called them \emph{closed graphs} and expanded the notion to higher dimensions as well (\emph{``closed $d$-complexes''}). 
Later Matsuda \cite{Matsuda} extended this algebraic approach to the broader class of ``co-comparability graphs'' (or ``weakly-closed graphs''), that we shall now describe in terms of their complement.

Any graph can be given an acyclic orientation by choosing a vertex labeling and then by directing all edges from the smaller to the larger endpoint. Every acyclic orientation can be induced this way. (This is not a bijection: different labelings may induce the same orientation). The \textsc{drawings of posets}, also called \emph{comparability graphs}, admit also \emph{transitive} orientations, namely, orientations such that  if $\vec{ab}$ and $\vec{bc}$ are present, so is $\vec{ac}$. Let us rephrase this in terms of a vertex labeling, which happens to be the same as a choice of a linear extension of the poset: Comparability graphs are those graphs~$\overline{G}$ that admit a labeling such that, for all $a<b<c$, 
\[  ab \in \overline{G} \textrm{ and } bc \in \overline{G} \Longrightarrow ac \in \overline{G}.\] 
Not all graphs admit transitive orientations: The pentagon, for example, does not. 

\textsc{Co-comparability graphs}, also called \textsc{weakly-closed graphs} in \cite{Matsuda}, are by definition the complements of comparability graphs. So they have a labeling that satisfies the contrapositive of the property above: Namely, for all $a<b<c$, 
\begin{equation}\label{eq:Cocomparability}  ac \in G  \Longrightarrow ab \in G \textrm{ or } bc \in G.\end{equation}

By comparing (\ref{eq:Interval}) and (\ref{eq:Cocomparability}), it is clear that all interval graphs are co-comparability. 

We should mention other two famous properties that all interval graphs enjoy. A graph is \textsc{perfect} if its chromatic number equals the size of the maximum clique. For example, even cycles are perfect, but odd cycles are not, because they have chromatic number $3$ and maximal cliques of size $2$. Note that in poset drawings,  a clique (resp. an independent set) is just a chain (resp. an antichain) in the poset, whereas a coloring represents a partition of the poset into antichains. 
Thus Dilworth's theorem (``for every partially ordered set, the maximum size of an antichain equals the minimum number of chains into which the poset can be partitioned'' \cite{Dilworth} -- see Fulkerson \cite{Fulkerson} for an easy proof) can be equivalently stated as ``every co-comparability graph is perfect''.   
Not all perfect graphs are co-comparability, as shown by large even cycles.
\enlargethispage{3mm}

Last property: A graph is \textsc{chordal} if it has no induced subcycles of length $\ge 4$. 
One can characterize chordality in the same spirit of (\ref{eq:Interval}), (\ref{eq:UnitInterval}) and (\ref{eq:Cocomparability}): Namely, a graph is chordal if and only if it admits a labeling such that,  for all $a<b<c$,
\begin{equation} \label{eq:Chordal}
ac, bc \in G \Longrightarrow ab \in G.
\end{equation}
In fact, if a graph $G$ has a labeling that satisfies (\ref{eq:Chordal}), then $G$ is obviously chordal, because if $c$ is the highest-labeled vertex in any induced cycle, then its neighbors $a$ and $b$ in the cycle 
must be connected by a chord by  (\ref{eq:Chordal}). The converse, first noticed by Fulkerson--Gross \cite{FulkersonGross}, follows recursively from Dirac's Lemma that every chordal graph has a ``simplicial vertex'', i.e. a vertex whose neighbors form a clique (cf.~\cite[p.~83]{Golumbic} for a proof). In fact, let us pick any simplicial vertex and label it by $n$. Then, in the (chordal!) subgraph induced on the unlabeled vertices, let us pick another simplicial vertex and label it by $n-1$; and so on.  The result is a labeling that satisfies (\ref{eq:Chordal}). See \cite[pp.~84--87]{Golumbic} for two algorithmic implementations.

Now, if the \emph{same} labeling satisfies   (\ref{eq:Cocomparability}) \& (\ref{eq:Chordal}), then it trivially satisfies (\ref{eq:Interval});  and conversely, if (\ref{eq:Interval}) holds, then also  (\ref{eq:Cocomparability}) \& (\ref{eq:Chordal}) trivially hold. Thus it is natural to conjecture that interval graphs are the same as the \emph{co-comparability chordal} graphs. The conjecture is true, although the `obvious' proof does not work: Some labelings on chordal graphs satisfy   (\ref{eq:Cocomparability}) but not (\ref{eq:Interval}), like $13, 23, 24$ on the three-edge path. 
However, Gilmore--Hoffman proved that  any labeling that satisfies   (\ref{eq:Cocomparability}) on a chordal graph (or more generally, on a graph that lacks induced $4$-cycles) can be modified  in a way that `linearly orders' all maximal cliques \cite[Theorem 8.1]{Golumbic} and thus satisfies (\ref{eq:Interval}).
For more characterizations, 
and a proof that all chordal graphs are perfect, see Golumbic \cite[Chapter 4]{Golumbic}.

\subsection{Higher-dimensional analogs and a hierarchy}
A $d$-dimensional extension\footnote{Several different  $d$-dimensional generalizations of chordality exist in the literature, e.g. toric chordality \cite{ANS} or ridge-chordality, cf.~e.~g.~\cite{Bolo}. Emtander chose the name ``$d$-chordal'' for what here we call ``chordal''.} of Characterization (\ref{eq:Chordal}) of chordality was provided in 2010 by Emtander \cite{Emtander}, and is equivalent to the following:

\begin{definition}[chordal] Let $\Delta$ be a pure $d$-dimensional  simplicial complex with $n$ vertices. $\Delta$ is called \emph{chordal} if there exists a labeling $1, \ldots, n$ of its vertices (called a \emph{``PEO''} or ``\emph{Perfect Elimination Ordering}'') such that for any two facets $F=a_0 a_1\cdots a_d$ and $G= b_0\cdots b_d$ of $\Delta$ with $a_d=b_d$, the complex $\Delta$ contains the full $d$-skeleton of the simplex on the vertex set $F \cup G$.
\end{definition}

In 2013, Characterization (\ref{eq:UnitInterval})  of unit-interval graphs was generalized as well:

\begin{definition}[closed \cite{EneEtAl}] 
Let $\Delta$ be a pure $d$-dimensional  simplicial complex with $n$ vertices.  $\Delta$ is called \emph{closed} if there exists a labeling $1, \ldots, n$ of its vertices such that for any two facets $F=a_0 a_1\cdots a_d$ and $G= b_0\cdots b_d$ of $\Delta$ with $a_i=b_i$ for some $i$, the complex $\Delta$ contains the full $d$-skeleton of the simplex on the vertex set $F \cup G$.
\end{definition}

Obviously, closed implies chordal. We now present four notions that in the strongly connected case are progressive weakenings of the closed property (see Theorem \ref{thm:Hierarchy} and Proposition \ref{prop:closed2} for the proofs); the first property still implies chordality, whereas the last three do not. In Section \ref{sec:ClosedTraceable}, we connect all these notions to traceability (Theorem \ref{thm:Hi}). One of these properties is ``new'' even for $d=1$: We will see its importance in Chapter 3. 

\begin{definition}[unit-interval] 
Let $\Delta$ be a pure $d$-dimensional simplicial complex with $n$ vertices. The complex $\Delta$ is called \emph{unit-interval} if there exists a labeling $1, \ldots, n$ of its vertices such that  for any $d$-face $F=a_0 a_1 \cdots a_d$ of $\Delta$, the complex $\Delta$ contains the whole $d$-skeleton of the simplex with vertex set  
$\{a_0, a_0 +1, a_0 + 2, \ldots, a_d\}$.
\end{definition}

\begin{definition}[under-closed] 
Let $\Delta$ be a pure $d$-dimensional simplicial complex with $n$ vertices. The complex $\Delta$ is called \emph{under-closed} if there exists a labeling $1, \ldots, n$ of its vertices such that  for any $d$-face $F=a_0 a_1 \cdots a_d$ of $\Delta$ the following condition holds:
\begin{compactitem}
\item all faces $a_0i_1i_2\ldots i_d$ of $\Sigma^d_n$ with $i_1\leq a_1, i_2\leq a_2,\ldots , i_d\leq a_d$, are in $\Delta$.
\end{compactitem}
\end{definition}

\begin{definition}[semi-closed]
Let $\Delta$ be a pure $d$-dimensional simplicial complex with $n$ vertices. The complex $\Delta$ is called {\it semi-closed} if there exists a labeling  of its vertices such that for any $d$-face $F=a_0a_1\ldots a_d$ of $\Delta$, at least one of the two following conditions holds: 
\begin{compactenum} [  (i) ]
\item either all faces $a_0i_1i_2\ldots i_d$ of $\Sigma^d_n$ with $i_1\leq a_1, i_2\leq a_2,\ldots , i_d\leq a_d$, are in $\Delta$,
\item or all faces $i_0i_1\ldots i_{d-1}a_d$ of $\Sigma^d_n$ with $i_0\geq a_0, i_1\geq a_1,\ldots ,i_{d-1}\geq a_{d-1}$ are in $\Delta$.
\end{compactenum}
\end{definition}

\begin{definition}[weakly-closed] 
Let $\Delta$ be a pure $d$-dimensional  simplicial complex with $n$ vertices.  $\Delta$ is called \emph{weakly-closed} if there exists a labeling $1, \ldots, n$ of its vertices such that for each $d$-face $F=a_0 a_1 \cdots a_d \in \Delta$, for every integer $g \notin F$ with $a_0 < g < a_d$, there exists a $d$-face $G=b_0 b_1 \cdots b_d$ in $\Delta$ such that $G$ contains $g$, $G$ is adjacent to $F$, and at least one of the following two conditions hold:
\item \begin{compactenum}[ (i)]
\item either $b_d \ne a_d$,
\item or $b_0 \ne a_0$.
\end{compactenum}
\end{definition}

\begin{remark}\label{rem:matsuda} For $d=1$, and assuming connectedness:
\begin{compactitem} 
\item ``closed $1$-complexes'' and ``unit-interval $1$-complexes'' are the same as the  \emph{unit interval graphs}; compare Looges--Olario \cite[Theorem 1]{LO} and Matsuda \cite[Prop.~1.3]{Matsuda}.
\item ``under-closed $1$-complexes'' are the same as the \emph{interval graphs}, cf.~\cite[Proposition 4]{LO}.
\item ``weakly-closed $1$-complexes'' are the same as the  \emph{co-comparability graphs}; this is clear from  the definition we gave, but a proof is also in Matsuda \cite[Theorem 1.9]{Matsuda}. 
\end{compactitem}
We will see that ``semi-closed $1$-complexes''  are an intermediate class  between the previous two. For example, such class contains the $4$-cycle but not the complement of long even cycles, as we will prove in  Theorem \ref{thm:Hierarchy}.  
\end{remark}

\begin{remark}[``unit-interval'' vs. ``chordal''] \label{rem:almost}
Suppose $F$ and $G$ are two faces of a complex $\Delta$ with $\min F = \min G$. Then any of the two conditions ``$\Delta$ is closed'', ``$\Delta$ is unit-interval'' forces $\Delta$ to contain the full $d$-skeleton of the simplex on the vertex set $F \cup G$. (Instead, the condition ``$\Delta$ is under-closed'' does not suffice: See Remark  \ref{rem:UCdC} below). 
Symmetrically, if $F$ and $G$ are $d$-faces of $\Delta$ with $\max F = \max G$, and $\Delta$ is either closed or unit-interval, then  $\Delta$ must contains the full $d$-skeleton of the simplex on the vertex set $F \cup G$. For this reason, all unit-interval $d$-dimensional complexes are chordal. 
\end{remark}

\begin{remark}[``Under-closed'' vs. ``chordal''] \label{rem:UCdC}
Not all chordal complexes are under-closed: Alread for $d=1$, the chordal graph $G=12, 13, 14, 23, 25, 36$, known as ``$3$-sun'' or ``net graph'', is neither interval nor co-comparability. However, while all interval graphs are chordal (and co-comparability), the statement ``all under-closed $d$-complexes are chordal'' is false for $d > 1$. In fact, we leave it to the reader to verify that the smallest counterexample is the $2$-complex
\[ \Delta \eqdef 123, 124, 234, 235.\]
The other direction in Gillmore--Hoffman's theorem (namely, ``all chordal co-comparability graphs are interval graphs'') does not extend to $d>1$ either, as the next Proposition shows.
\end{remark}

\begin{proposition} \label{prop:GilmoreHoffman}
\begin{compactenum}[\rm (i) ]
\item Some chordal simplicial complexes are semi-closed, but not under-closed. 
\item If a simplicial complex is chordal and semi-closed with respect to the \emph{same} labeling, then with respect to that labeling the complex is also under-closed. 
\end{compactenum}
\end{proposition}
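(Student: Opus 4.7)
For part (i), my plan is to exhibit a specific small example, necessarily in dimension $d\ge 2$: when $d=1$, the Gilmore--Hoffman theorem together with the fact (Remark \ref{rem:matsuda}) that every semi-closed graph is weakly-closed (i.e.\ co-comparability) forces every chordal semi-closed graph to be an interval graph, hence under-closed. By part (ii), any candidate counterexample must have its chordality-witnessing labeling disagree with every labeling witnessing semi-closure, since otherwise (ii) would force under-closure in the common labeling. I would therefore search among small $2$-complexes whose PEO is genuinely incompatible with any semi-closure labeling, and verify the three required properties (chordal, semi-closed, not under-closed) by direct inspection of each facet.

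For part (ii), the key technical tool is a strengthening of Emtander's pairwise chordality, which I call the \emph{Chordality Lemma}: \emph{in a chordal pure $d$-complex $\Delta$ with its PEO labeling, for any vertex $v$ and any collection $F_{1},\ldots,F_{k}$ of facets of $\Delta$ all having maximum vertex $v$, every $(d+1)$-subset $T$ of $\bigcup_{i}F_{i}$ containing $v$ is itself a facet of $\Delta$.} I would prove this by induction on the ``deficit'' $d+1-\max_{i}|T\cap F_{i}|$: the base case (deficit $=0$) is immediate since some $F_{i}$ already contains $T$; in the inductive step, pick an $F^{*}$ attaining the maximum, a vertex $w\in T\setminus F^{*}$, and some $F_{w}$ in the collection containing $w$ (which exists because $w$ lies in the union). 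Apply Emtander's pairwise chordality to $F^{*}$ and $F_{w}$ to obtain the full $d$-skeleton on $F^{*}\cup F_{w}$ inside $\Delta$, and extract from it a max-$v$ facet $\widetilde F\subseteq F^{*}\cup F_{w}$ containing $(T\cap F^{*})\cup\{w\}$. Then $|\widetilde F\cap T|\ge|F^{*}\cap T|+1$, so the deficit has strictly dropped and the inductive hypothesis closes the argument.

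With the Lemma in hand, let $F=a_{0}a_{1}\cdots a_{d}$ be a facet of the chordal semi-closed complex $\Delta$. If $F$ satisfies condition (i) of semi-closure we are done, so assume $F$ satisfies (ii): the family $S_{0}$ of faces $[b_{0},\ldots,b_{d-1},a_{d}]$ with $b_{j}\ge a_{j}$ all lies in $\Delta$. A direct placement argument (given $v\in[a_{0},a_{d}-1]$, set $b_{j^{\star}}:=v$ for $j^{\star}:=\max\{j:a_{j}\le v\}$ and $b_{j}:=a_{j}$ elsewhere) shows $\bigcup_{F'\in S_{0}}F'=[a_{0},a_{d}]$, so the Chordality Lemma applied to $S_{0}$ yields $(\star)$: every $(d+1)$-subset of $[a_{0},a_{d}]$ containing $a_{d}$ is a facet of $\Delta$. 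Finally, fix any $G=[a_{0},i_{1},\ldots,i_{d}]$ with $i_{j}\le a_{j}$. If $i_{d}=a_{d}$, then $G$ is a subset as in $(\star)$, so $G\in\Delta$. If $i_{d}<a_{d}$, then both $G_{1}:=[a_{0},i_{1},\ldots,i_{d-1},a_{d}]$ and $G_{2}:=[i_{1},\ldots,i_{d},a_{d}]$ lie in $\Delta$ by $(\star)$ and share max $a_{d}$; Emtander's pairwise chordality applied to them gives the full $d$-skeleton on $G_{1}\cup G_{2}=G\cup\{a_{d}\}$, and in particular $G\in\Delta$. This establishes condition (i) of under-closure for $F$. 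The main obstacle is the Chordality Lemma itself: Emtander's definition is stated only pairwise, whereas the argument for (ii) inherently needs to combine many max-$v$ facets at once, and the inductive ``one vertex at a time'' trick above is what promotes the pairwise condition to the needed multi-way one.
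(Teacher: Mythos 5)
Your part (ii) is correct, and in one respect more careful than the paper's own argument. The paper's proof runs in the opposite direction: starting from the target face $G=a_0b_1\cdots b_d$ with $G\le F$ componentwise and $\min G=\min F$, it builds for each $b_i\notin F$ an auxiliary face $A_i\ge F$ with $\max A_i=a_d$ (obtained by swapping $b_i$ for the unique $a_j$ with $a_j<b_i<a_{j+1}$; these lie in $\Delta$ by condition (ii) of semi-closure), and then invokes chordality on the whole family $\{F\}\cup\{A_i\}$ to conclude that $\Delta$ contains the full $d$-skeleton on $F\cup\bigcup_i A_i=F\cup G$. That last step silently uses exactly the multi-facet strengthening of Emtander's pairwise condition that you isolate and prove as your Chordality Lemma; your deficit induction is sound (the completion of $(T\cap F^{*})\cup\{w\}$ to a $(d+1)$-subset of $F^{*}\cup F_{w}$ automatically has maximum $v$, since $v\in T\cap F^{*}$ and every element of $F^{*}\cup F_{w}$ is at most $v$, so $\widetilde F$ is a legitimate new member of the collection). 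Your subsequent derivation of $(\star)$ for the interval $[a_0,a_d]$ and the two-face patch $G_1,G_2$ for the case $i_d<a_d$ is a clean, slightly different route to the same conclusion. So for (ii) you have a complete proof, one that in fact documents a step the paper only asserts.

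Part (i), however, is not yet proved: it is an existence statement, and you stop at a search strategy without exhibiting a witness. Your reductions are correct -- no graph example can exist, by Gilmore--Hoffman together with ``semi-closed $\Rightarrow$ weakly-closed'', and by your own part (ii) any chordality-witnessing labeling must be incompatible with every semi-closing labeling -- but these only tell you where to look. The paper's witness is the $2$-complex $\Sigma=123,\,124,\,134,\,135,\,167,\,234,\,246$, which is chordal with the displayed PEO; after a relabeling it becomes $123,\,256,\,345,\,346,\,347,\,356,\,456$, which is semi-closed, and a computer-assisted check over all labelings shows it is not under-closed. To complete (i) you would need to supply such an example together with some argument (or exhaustive verification) ruling out under-closedness under \emph{every} labeling, which is the genuinely laborious part.
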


\begin{proof}
\begin{compactenum}[\rm (i) ] 
\item The example we found is the complex
\[ \Sigma = 123, 124, 134, 135, 167, 234, 246.\]
The labeling above is a PEO, so $\Sigma$ is chordal. A convenient relabeling (we leave it to the reader to figure out the bijection from the vertex degrees) allows us to rewrite it as 
\[ \Sigma= 123, 256, 345, 346, 347, 356, 456.\]
With this new labeling we see that $\Sigma$ is weakly- and semi-closed.  However, with the help of a software designed by Pavelka \cite{Pavelka}, we verified that $\Sigma$ is not under-closed.  
\item Let $\Delta$ be a simplicial complex with a labeling that is a PEO and makes $\Delta$ semi-closed. Let $F=a_0 \cdots a_d$ be a face of $\Delta$ with $\operatorname{gap} F >0$. 
Let $G=a_0 b_1 \cdots b_d$ be a different $d$-face of $\Sigma^d_n$ such that $G \le F$ (componentwise) and $\min G = \min F$. 
We claim that for any $b_i$ not in $F$, there exists a $d$-face $A_i$ of $\Sigma^d_n$ that contains $b_i$, such that $A_i \ge F$ (componentwise) and $\max A_i = \max F$. In fact, by construction $a_0 < b_i \le  b_d \le a_d$.  Since $b_i$ is not in $F$, 
there exists a unique $j \in \{0, \ldots, d-1\}$ such that $a_{j} < b_i < a_{j+1}$. Thus if we set
\[ A_i \eqdef a_0 \cdots a_{j-1} b_i a_{j+1} \cdots a_{d}\]
the claim is proven. Now, either  $F$ satisfies condition (i) of the semi-closed definition, and then $G \in \Delta$; or $F$ satisfies condition (ii),  in which case all $A_i$'s are in $\Delta$.  But by construction, the maximum of all these $A_i$'s is $a_d$, the same maximum of $F$. So by chordality, $\Delta$ must contain all the $d$-faces of $\Sigma^d_n$ with vertex set contained in 
\[F \:  \cup \:   \bigcup_{i \textrm{ s.t. } b_i \notin F} A_i \: \: = \: \{a_0, a_1, \ldots, a_d\} \cup \{b_1, \ldots, b_d\} \: = \: F \cup G.\] So also in this case $G \in \Delta$. \qedhere
\end{compactenum}
\end{proof}

\begin{remark}
Part (ii) of Proposition \ref{prop:GilmoreHoffman} is false if one replaces the assumption ``semi-closed'' with ``weakly-closed'': The subcomplex $\Sigma' = 123, 124, 134, 135, 234$  of $\Sigma$ is weakly-closed and chordal with respect to this labeling, but to prove it under-closed, we need to change labeling.
\end{remark}

\begin{remark}[``Under-closed'' vs.~``Shifted'']\label{rem:kalai} 
Recall that a simplicial complex $\Delta$ on $n$ vertices is called \emph{shifted} if for every face $F$ of $\Delta$, and for every face $G$ of the simplex on $n$ vertices, if $\dim F = \dim G$ and $F \le G$ componentwise, then also $G \in \Delta$. Shifted complexes are obviously under-closed. The converse is false, as shown by the graph $12, 23, 34$. 
\end{remark}

\begin{remark}\label{rem:cones} 
Being shifted is maintained under taking cones, by assigning label $1$ to the new vertex. In contrast, $G=12,13,23$ is closed and chordal, but the cone over it is neither closed nor chordal.  In fact, \emph{none} of the five properties (closed, unit-interval, under-closed, semi-closed, weakly-closed) is maintained under taking cones. A counterexample for all is the unit-interval graph $G= 12, 34, 56, 78$. The cone over $G$ is the $U^2_4$ of Lemma \ref{lem:bouquet} below.
\end{remark}

 Let us start exploring the relations between all the new properties with some Lemmas.

\begin{lemma} \label{lem:skeleton}
Let $d \ge k \ge 1$  be integers. If a pure $d$-dimensional simplicial complex is unit-interval (resp. under-closed, resp. semi-closed, resp. weakly-closed), then its $k$-skeleton is also unit-interval (resp. under-closed, resp. semi-closed, resp. weakly-closed).
\end{lemma}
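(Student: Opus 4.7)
The strategy is to show that the same labeling witnessing a property for $\Delta$ also witnesses the corresponding property for its $k$-skeleton $\Delta^{(k)}$. Note that $\Delta^{(k)}$ is pure $k$-dimensional, because every $k$-face of $\Delta$ is contained in some $d$-face of $\Delta$ by purity. The common first observation is that any $k$-face $F'=c_0c_1\cdots c_k$ of $\Delta^{(k)}$ is a $k$-face of $\Delta$, hence $F'\subset F$ for some $d$-face $F=a_0a_1\cdots a_d$ of $\Delta$; write $c_l=a_{i_l}$ with $0\le i_0<i_1<\cdots<i_k\le d$.

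For the unit-interval case, the containment $\{c_0,c_0+1,\dots,c_k\}\subseteq\{a_0,a_0+1,\dots,a_d\}$ (since $a_0\le c_0$ and $c_k\le a_d$) means the full $k$-skeleton on the former sits inside the full $d$-skeleton on the latter, which is in $\Delta$ by hypothesis.

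The under-closed case is the main technical point. Given a target $c_0j_1\cdots j_k$ with $j_l\le c_l$, I would construct a $d$-face of $\Delta$ containing $\{c_0,j_1,\dots,j_k\}$. Applying under-closedness to $F$ yields every $a_0m_1\cdots m_d$ with $m_l\le a_l$ as a face of $\Delta$; set $m_{i_0}:=c_0=a_{i_0}$ (vacuous if $i_0=0$), and for $l=1,\dots,k$ place $j_l$ greedily via
\[
  \psi(l):=\min\bigl\{s>\psi(l-1)\,:\,a_s\ge j_l\bigr\},\qquad\psi(0):=i_0,
\]
setting $m_{\psi(l)}:=j_l$ and filling every remaining slot $s$ with $a_s$. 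The bound $j_l\le c_l=a_{i_l}$ forces $\psi(l)\le i_l$, while $\psi(l)\ge\psi(l-1)+1$ gives $\psi(l)\ge i_0+l$; since $i_l\ge i_0+l$ the set $\{i_0+l,\dots,i_l\}$ is nonempty, so $\psi$ is well-defined. A routine check on consecutive slots shows that $(m_1,\dots,m_d)$ is strictly increasing and satisfies $m_l\le a_l$, so the constructed $d$-face lies in $\Delta$ and contains the target $k$-face.

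For the semi-closed case, pick a $d$-face $F$ containing $F'$: by hypothesis $F$ satisfies either condition (i) or (ii) of the semi-closed definition. In the first case, the greedy construction above produces all faces $c_0i_1\cdots i_k$ with $i_l\le c_l$ in $\Delta^{(k)}$; in the second case, the symmetric argument (obtained by reversing the labeling, so that under-closed becomes over-closed) produces all faces $i_0\cdots i_{k-1}c_k$ with $i_l\ge c_l$. Either way, the corresponding semi-closed condition holds for $F'$ in $\Delta^{(k)}$.

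For the weakly-closed case, given a $k$-face $F'$ and $g\notin F'$ with $c_0<g<c_k$, I would split on whether $g\in F$. If $g\in F$, then $G':=(F'\setminus\{c_0\})\cup\{g\}$ is a $k$-subset of the $d$-face $F$, hence a $k$-face of $\Delta$; it is adjacent to $F'$, contains $g$, and its smallest vertex $\min(c_1,g)$ is strictly greater than $c_0$. If $g\notin F$, apply weakly-closedness to $F$ with gap vertex $g$: there is a $d$-face $G=(F\setminus\{a_j\})\cup\{g\}$ with $b_d\ne a_d$ or $b_0\ne a_0$, which forces $j\in\{0,d\}$ (otherwise $a_0,a_d\in G$ yields $b_0=a_0$ and $b_d=a_d$). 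Depending on whether $c_0=a_0$ or $c_k=a_d$ (i.e.\ on whether $i_0=0$ or $i_k=d$), choose $G':=(F'\setminus\{c_0\})\cup\{g\}$ or $G':=(F'\setminus\{c_k\})\cup\{g\}$ so as to have $G'\subset G$; this $G'$ is a $k$-face of $\Delta$, adjacent to $F'$, contains $g$, and satisfies $b'_0\ne c_0$ or $b'_k\ne c_k$ as required.

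The main obstacle is the combinatorial bookkeeping of the under-closed case: one must check that the greedy placement keeps the $m_s$ values strictly increasing while respecting every componentwise constraint $m_s\le a_s$. Once the under-closed case is in hand, semi-closed follows by applying it together with its mirror, while the unit-interval and weakly-closed cases reduce to short direct constructions as described above.
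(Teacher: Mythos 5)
Your proof is correct, and it covers more ground than the paper's: the paper reduces to the case $k=d-1$ (the general statement then follows by iterating skeleta) and writes out only the weakly-closed case, declaring the other three ``easier,'' whereas you prove all four properties directly for arbitrary $k$. Your weakly-closed argument is essentially the paper's, transplanted to general codimension: the same dichotomy on whether $g$ lies in the ambient facet $F$, and the same observation that the facet $G$ produced by weak closedness must drop $a_0$ or $a_d$ from $F$ (else $\min G=\min F$ and $\max G=\max F$). The one place your route genuinely costs more is the under-closed case, where skipping the codimension-one reduction forces the greedy slot-assignment $\psi$; I checked it and it works (the local comparisons between consecutive slots do give a strictly increasing tuple dominated componentwise by $F$, and $\psi(l)\le i_l$ keeps the construction inside $\{1,\dots,d\}$). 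What the direct approach buys is a single self-contained argument per property rather than an induction on skeleta; what the paper's reduction buys is that each case becomes a one-step replacement of a single vertex. One small imprecision: in the weakly-closed case with $g\notin F$, the choice between deleting $c_0$ or $c_k$ from $F'$ should be governed by which endpoint $a_j$ ($j\in\{0,d\}$) the facet $G$ removes from $F$, not by whether $c_0=a_0$ or $c_k=a_d$; as you note, the criterion is simply ``whichever choice makes $G'\subseteq G$,'' and for $j=0$ deleting $c_0$ always works (since $a_0$ is either $c_0$ or absent from $F'$), symmetrically for $j=d$.
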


\begin{proof} It suffices to prove the claim for $k=d-1$; the general claim follows then by iterating. We prove only the weakly-closed case; the others are easier. Let $\Delta$ be a pure weakly-closed $d$-complex. Let $\sigma=a_0 \cdots a_{d-1}$ be a $(d-1)$-face of $\Delta$. Let $g \notin \sigma$ be an integer such that $a_0 < g < a_{d-1}$. Since $\Delta$ is pure, there exists a $d$-face $F$ of $\Delta$ that contains $\sigma$. Let $v$  be the vertex of $F$ not in $\sigma$.  If $v=g$, i.e. if $F = \{g\} \cup \sigma$, then all the $d$ facets of $\Delta$ different than $\sigma$ are adjacent to $\sigma$ and contain $g$; if we choose one of these $d$ facets that has either different minimum or different maximum than $\sigma$, we are done. So let us assume that $ v \ne g$, or equivalently, that $F$ does not contain $g$. By the weakly-closed assumption, there exists a $d$-face $G$ in $\Delta$ such that $G$ contains $g$, $G$ is adjacent to $F$, and $G$ and $F$ do not have same minimum and maximum. If $G$ contains the entire face $\sigma$, i.e. $G = \sigma \cup g$, then again we could conclude as above, choosing some facet of $G$ different than $\sigma$. So we can assume that $G$ does not contain the whole of $\sigma$, or in other words, that the vertex $v$ is present in $G$. Let $\tau$ be the unique face of $G$ that does not contain $v$. By construction, $\sigma$ and $\tau$ are adjacent, and $g \in \tau$. If $\sigma$ and $\tau$ had same minimum and maximum, then also $F$ and $G$ would, because $F$ and $G$  are obtained by  adding to $\sigma$ and $\tau$, respectively, the same element $v$. Hence, the $(d-1)$-skeleton of $\Delta$ is weakly-closed.
\end{proof}

\begin{lemma} \label{lem:Bd} Let $d \ge 2$. Let $C^{d+1}$  be the $(d+1)$-dimensional simplicial complex with facets $H_1$ and $H_2$. The boundary $S^d$ of $C^{d+1}$ is strongly-connected, semi-closed, but not under-closed. The $d$-skeleton $B^d$ of $C^{d+1}$ is traceable, strongly-connected, unit-interval, but not closed. \\In particular, the $k$-skeleton of a closed complex need not be closed. 
\end{lemma}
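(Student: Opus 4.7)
The plan is to establish the six assertions separately, leveraging the explicit structure of $S^d$ and $B^d$. Write $u_1, u_2$ for the two ``apex'' vertices of $C^{d+1}$ (belonging to only one of $H_1, H_2$) and $W = \{w_1, \ldots, w_{d+1}\} = H_1 \cap H_2$; then $S^d$ has exactly the $2(d+1)$ facets $\{u_i\} \cup (W \setminus w_j)$ for $i \in \{1,2\}$, $j \in \{1,\ldots,d+1\}$, and $B^d = S^d \cup \{W\}$. Strong connectivity of $S^d$ is immediate from its dual graph, which consists of a ``top'' clique through $u_1$ and a ``bottom'' clique through $u_2$ joined by a perfect matching (the pair through $W\setminus w_j$), and it is inherited by $B^d$. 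Under the natural labeling $u_1 \mapsto 1$, $w_j \mapsto j+1$, $u_2 \mapsto d+3$, each top facet of $S^d$ satisfies semi-closed condition (i), because any $\{1, i_1, \ldots, i_d\}$ with $i_k \le a_k$ is a $(d+1)$-subset of $\{1, \ldots, d+2\}$ containing the vertex $1$, hence another top facet; bottom facets satisfy (ii) symmetrically. For $B^d$, traceability is immediate since $H_1$, $H_2 = W$, and $H_3$ all lie in $B^d$; unit-interval holds because no $d$-face of $C^{d+1}$ contains both vertex $1$ and vertex $d+3$, so every $d$-face of $B^d$ sits inside one of the two $(d+1)$-simplices $\{1,\ldots,d+2\}$ or $\{2,\ldots,d+3\}$, hence so does the interval $\{a_0, a_0+1, \ldots, a_d\}$ together with its full $d$-skeleton.

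The subtler part is ``not under-closed'' for $S^d$, which I prove by contradiction. Under any labeling, write $p < q$ for the apex labels and $c_1 < \cdots < c_{d+1}$ for the labels of $W$. Applying under-closed to any facet containing the vertex labeled $1$, with $b_j := j+1$ — automatically admissible because $a_0 = 1$ and the $a_j$ strictly increase — forces the $(d+1)$-subset $\{1, 2, \ldots, d+1\}$ to be a facet of $S^d$. If both apex labels exceed $d+1$ this set equals $W$, the unique $(d+1)$-non-face of $S^d$; if both are at most $d+1$ it contains both apexes and so is still not a facet; either case is a contradiction. In the remaining case ($p \le d+1$, $q \in \{d+2, d+3\}$), I apply under-closed to $F^{\mathrm{bot}} := \{u_2\} \cup (W \setminus w_k)$ where $w_k$ is the equatorial vertex labeled $c_d$. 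Since at most two labels lie in $\{d+2, d+3\}$, one has $c_d \le d+1 < q$; sorting the labels of $F^{\mathrm{bot}}$ then yields $a_0 = c_1$ together with $a_j \ge c_{j+1}$ for all $j \ge 1$, so $b_j := c_{j+1}$ is admissible and forces the non-face $W$ into $S^d$ — the desired contradiction. The main obstacle is tracking the componentwise dominance in this last step, since the sort depends on where $p$ falls among the $c_j$'s and must be verified subcase by subcase.

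For ``$B^d$ not closed'', I again argue by contradiction. Write $A = \ell(u_1)$, $B = \ell(u_2)$, and $X_1 < \cdots < X_{d+1}$ for the labels of $W$ (after reindexing the $w_j$). Pick any ``middle'' index $r \in \{2, \ldots, d\}$, nonempty since $d \ge 2$. A direct count gives that the sorted position of $w_r$ in $\{u_1\} \cup (W \setminus w_s)$ is $[A < X_r] + (r-1) - [s<r]$, and analogously $[B < X_r] + (r-1) - [t<r]$ in $\{u_2\} \cup (W \setminus w_t)$. Choosing $s, t$ both greater than $r$ (or both less than $r$) when $A, B$ sit on the same side of $X_r$, and choosing $s < r < t$ or $t < r < s$ with the appropriate orientation in the opposite case, makes these two positions coincide. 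Closedness would then force the full $d$-skeleton of $F \cup G \supseteq \{u_1, u_2\}$ into $B^d$; but no $d$-face of $C^{d+1}$ contains both apexes, a contradiction. Finally, since $C^{d+1}$ is trivially closed under the natural labeling — its two $(d+1)$-facets $\{1,\ldots,d+2\}$ and $\{2,\ldots,d+3\}$ share no label at any sorted position — this also establishes the parenthetical statement that the $d$-skeleton of a closed complex need not itself be closed.
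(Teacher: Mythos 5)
Your proof is correct, and for the two impossibility claims it takes a genuinely different route from the paper's. The paper first proves a normalization lemma: any labeling making $B^d$ or $S^d$ closed, and any labeling making $S^d$ under-closed, must assign $1$ and $d+3$ to the two apices (the under-closed case uses the fact that $S^d$ is a sphere, so every $(d-1)$-face lies in exactly two facets, while the forced faces $12\cdots d\,j$, $j\in\{d+1,d+2,d+3\}$, would give three). It then exhibits a single explicit violation for the standard labeling. You instead work with an \emph{arbitrary} labeling throughout: for ``not under-closed'' you force either a face containing both apices or the missing equator $W$ directly, via a short case analysis on where the apex labels sit relative to $d+1$; for ``not closed'' you count sorted positions of a middle equatorial vertex $w_r$ and choose the deleted indices $s,t$ so that $w_r$ occupies the same position in a top and a bottom facet, whose union then contains both apices. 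Your version is more self-contained (no appeal to the sphere property and no normalization step), and it makes explicit that $C^{d+1}$ itself is closed, which the paper leaves implicit in the final ``in particular''; the paper's normalization, on the other hand, is proved once and reused for both $B^d$ and $S^d$, and yields the slightly stronger information that the standard labeling is essentially the only candidate. The positive claims (strong connectivity, semi-closedness, traceability, unit-interval) are verified in both proofs by the same direct inspection of the standard labeling. Two cosmetic points: $W$ is not the \emph{unique} $(d+1)$-non-face of $S^d$ (any $(d+1)$-set containing both apices is also a non-face), but your argument only needs that $W$ is a non-face; and it would help to say explicitly that the apex appearing in $F^{\mathrm{bot}}$ is the one labeled $q$, since that is what your sorting computation uses.
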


\begin{proof} 
Note that $S^d$ is $B^d$ minus a $d$-face, so since $d \ge 2$ the $1$-skeleta of $B^d$ and of $S^d$ coincide. The vertices of $B^d$ (respectively, of $S^d$) can be partitioned with respect to the number of edges containing them, as follows:  exactly two  vertices have degree $d+1$, and we shall call them ``apices''; the remaining $d+1$ have degree $d+2$, and we shall call them ``basepoints''.  The crucial remark is that in $B^d$ (resp. $S^d$) the two apices are not connected by any edge.  We claim that any labeling that makes $B^d$ or $S^d$ closed  \emph{must} assign labels $1$ and $d+3$ to the two apices.  In fact: 
\begin{compactitem}
\item If the label $1$ is assigned to a basepoint, let $b_1, \ldots, b_d$ be the other $d$ basepoints and let $v,w$ be the apices, with $v<w$. Then $B^d$ (resp.~$S^d$) contains a $d$-face $F$ of vertices $\{1,b_1, \ldots, b_{d-1}, v\}$ and a $d$-face $G$ of vertices $\{1,b_1, \ldots, b_{d-1}, w\}$.  
Note that $1$ is in the same position in $F$ and $G$, 
yet $B^d$ (or $S^d$) does not contain the whole $d$-skeleton of the simplex on $F \cup G$, because $vw$ is missing. So the closed condition is not satisfied.
\item Symmetrically, if $d+3$ is assigned to a basepoint, call $b_1, \ldots, b_d$ the other basepoints and $v,w$ the apices, with $v<w$.  Then $B^d$ (resp.~$S^d$) contains a $d$-face $F$ of vertices $\{v, b_1, \ldots, b_{d-1}, d+3\}$ and a $d$-face $G$ of vertices $\{v, b_1, \ldots, b_{d-1}, d+3\}$. So $d+3$ is the maximum of both faces, and again $B^d$ (resp.~$S^d$) does not contain the edge $vw$, so the closed condition is not met. 
\end{compactitem}
Next, we claim that any labeling that makes $S^d$ under-closed  \emph{must} assign labels $1$ and $d+3$ to the two apices. (Caveat: This claim is valid only for $S^d$, since already $B^2$ is under-closed with the labeling $123, 124, 134, 234, 125, 135, 235$, where the apices are $4$ and $5$.) 
 In fact: 
\begin{compactitem}
\item If the label $1$ is assigned to a basepoint, then any other vertex is contained in a facet that contains also $1$. The same is true if $d+3$ is assigned to a basepoint. So either way,  there is a face $H$ containing both $1$ and $d+3$. Thus $\operatorname{gap}H =2$. But then if the labeling is under-closed, the complex must contain all three facets
$12 \cdots d \, j $, with $j \in \{d+1, d+2, d+3\}$.  So we found in $S^d$ three different facets containing the $(d-1)$-face $\sigma \eqdef 12 \cdots d$. This is a contradiction because $S^d$ is topologically a sphere: Every $(d-1)$-face in it lies in exactly two $d$-faces.  
\end{compactitem}

Thus the two claims are proven. So up to a rotation that does not affect the list of facets, both for $B^d$ and $S^d$ we may focus on the labeling that we introduced from the start. With respect to that labeling, $S^d$ is clearly semi-closed, but it is not under-closed, because the $d$-face with vertices $2, 3, \ldots, d+1, d+2$ is missing. Similarly, with respect to that labeling, $B^d$ is traceable and unit-interval, but it is not closed for the following reason. Let $F$ (resp. $G$) be the face of vertices $1, 3, 4, \ldots, d+1, d+2$ (resp.  $2, 3, 4, \ldots, d+1, d+3$). Since $F$ (resp. $G$) is contained in the facet $H_1$ (resp. $H_2$) of $C^{d+1}$,  it is in  $B^d$. Yet vertex $3$ appears in second position in both $F$ and $G$. However,  the face $H_3$ of vertices $1, 3, 4, \ldots, d+1, d+3$ contains the edge connecting the two apices, so $H_3$ is not in $B^d$.
\end{proof}

\begin{figure}[htbp] 
\begin{center}
\hskip-4mm \includegraphics[scale=0.37]{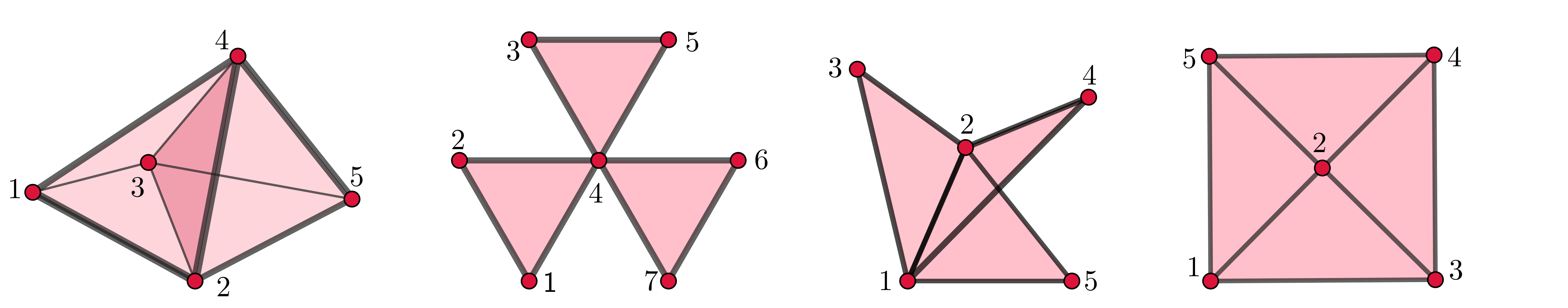} 
\caption{(i) A $2$-complex $B^2=123, 124, 134, 234, 235, 245, 345$ that is unit-interval, but not closed; if we remove the triangle $234$, we get a $2$-complex $S^2$  that is semi-closed, not under-closed, cf.~Lemma~\ref{lem:Bd}.
(ii) A $2$-complex $U^2_3 = 124, 345, 467$ that is closed, but not weakly-closed, cf.~Lemma~\ref{lem:bouquet}. \\
(iii) A~$2$-complex $\Delta^2_3 = 123, 124, 125$ that is under-closed, but not unit-interval, cf.~Lemma~\ref{lem:WheelComplex}.\\
(iv) A $2$-complex $Q^2=123, 125, 234, 245$ that is weakly-closed, but not semi-closed, cf.~Lemma~\ref{lem:StarredSquare}.}
\label{fig:ToyExamples1}
\end{center}
\end{figure}

\begin{lemma} \label{lem:bouquet}
Let $d$ and $k$ be positive integers. Let $U^{d}_k$ be a one-point union of $k$ copies of $\Sigma^d$. Then $U^{d}_{k}$ is closed if and only if $k \le d+1$, and it is weakly-closed if and only if $k \le 2$. In particular, for all $d \ge 2$, the $d$-complex $U^d_{d+1}$ is closed, but not weakly-closed.
\end{lemma}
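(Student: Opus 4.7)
The proof splits into proving each ``iff'' in two directions, after which the ``in particular'' clause follows automatically. Throughout, write $v$ for the shared apex of the $k$ copies of $\Sigma^d$; the total number of vertices is $kd+1$, and any two distinct facets intersect exactly in $\{v\}$.

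For ``closed $\Longleftrightarrow k\le d+1$'': if $k\le d+1$, I plan to construct a labeling that puts $v$ in a \emph{different} position in each facet. Give $v$ the label $L$ and, in the $i$-th copy ($i=1,\ldots,k$), assign $i-1$ of the $d$ non-$v$ vertices labels below $L$ and the remaining $d-i+1$ labels above $L$; once sorted, $v$ sits in position $i$ of the facet $F_i$. Since two distinct facets share only $v$, and $v$ occupies distinct positions in them, no coordinate is ever shared between distinct facets, and the closed condition is satisfied vacuously. Conversely, if $k>d+1$, pigeonhole forces $v$ into the same position in two distinct facets $F_i,F_j$; closedness then demands the entire $d$-skeleton on $F_i\cup F_j$, which includes $d$-faces with vertices drawn from both $F_i\setminus\{v\}$ and $F_j\setminus\{v\}$, and such faces are absent from $U^d_k$.

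For ``weakly-closed $\Longleftrightarrow k\le 2$'' (under the implicit assumption $d\ge 2$, consistent with the ``in particular''): if $k\le 2$, label the two copies as consecutive blocks $[1,d+1]$ and $[d+1,2d+1]$ glued at $d+1$, so that every facet is a block of consecutive labels, no gap vertex exists, and the condition holds vacuously. The harder direction is showing that $k\ge 3$ prevents $U^d_k$ from being weakly-closed under \emph{any} labeling. The key leverage is that for $d\ge 2$ we have $|F_i\cap F_j|=1<d$ for all distinct $i,j$, so no two facets of $U^d_k$ are adjacent; hence if some $F_i$ has a gap $g$, the only $d$-face containing $g$ is the facet $F_j$ of the unique copy holding $g$, and $F_j$ is not adjacent to $F_i$, so the weakly-closed condition at $g$ cannot be met.

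The main obstacle is therefore proving that for $k\ge 3$ at least one facet must have a gap. A facet $F_i$ is gap-free precisely when its $d+1$ labels form a block of consecutive integers; such a block contains $L$ and its complement inside the block consists of the $d$ non-$v$ labels of the $i$-th copy. Since the non-$v$ label sets of distinct copies are disjoint, two such blocks can intersect only in $\{L\}$; but two blocks of length $d+1$ that both contain $L$ and meet only in $\{L\}$ must have $L$ as the maximum of one and the minimum of the other. Consequently at most one facet can be gap-free ``from the right'' and at most one ``from the left,'' i.e.\ at most two facets in total can be gap-free. If $k\ge 3$, some facet has a gap and the argument of the preceding paragraph produces the contradiction. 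The ``in particular'' clause is then immediate, since $k=d+1$ satisfies $k\le d+1$ (so $U^d_{d+1}$ is closed) while $k=d+1\ge 3$ whenever $d\ge 2$ (so $U^d_{d+1}$ is not weakly-closed).
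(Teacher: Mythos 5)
Your proof is correct and follows essentially the same route as the paper's: pigeonhole on the position of the apex $v$ plus an explicit labeling placing $v$ in distinct positions for the closed part, and the observation that no two facets are adjacent (so every facet must be gap-free, which forces $k\le 2$) for the weakly-closed part. You supply slightly more detail than the paper on why at most two facets can be gap-free, and you are right to flag the implicit restriction to $d\ge 2$ in the weakly-closed claim, which the paper's proof also tacitly uses.
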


\begin{proof} Let $v$ be the vertex common to all facets. When $k>d+1$, by the pigeonhole principle there are two facets in which $v$ appears in the same position; were $U^{d}_{k}$ closed, its dual graph would have to contain a clique, which is not the case. When $k \le d+1$, we force the closed property by giving $v$ a label so that $v$ appears in a different position in all facets. We show an algorithm to do this in case $k=d+1$, leaving the case $k < d+1$ to the reader. We label $v$ by $f_d \eqdef \binom{d+1}{2} + 1$.  We label the vertices of the first facet by $123\cdots d\, f_d$: so in the first facet, $v$  comes last. 
Then for all $i=2, 3, \ldots, k=d+1$, we label the $i$-th facet by using the next available $d-i$ integers below $f_d$,  then $f_d$, then the first $i-1$ available integers after $f_d$. This way in the $i$-th facet, $v$ comes ``$i$-th last''.  
For example, the labeling we construct for $U^3_4$, since $f_3 \eqdef \binom{4}{2} + 1 = 7$, $\, $ is $\: U^{3}_{4} \, = \, 1237, \; 4578, \; 679\,10, \; 7 \, 11 \, 12 \, 13$. \\
Finally, suppose that  $U^{d}_{k}$  is weakly-closed. No face of   $U^{d}_{k}$  has an adjacent facet. Hence, the labeling satisfying the weakly-closed condition must consist only of gap-$0$ faces. But labeling all facets with consecutive vertices is possible if and only if $k \le 2$. 
\end{proof}

\begin{lemma} \label{lem:WheelComplex}
Let $k\ge 1$ and $d\ge 2$ be integers. 
Let $\Delta^{d}_k$ be the $d$-dimensional complex on $d+k$ vertices obtained by joining the $(d-1)$-simplex $\Sigma^{d-1}$ to a $0$-complex consisting of $k$ points. 
Then 
\begin{compactenum}[ \rm (a)]
\item $\Delta^d_k$ is under-closed for all $k$. 
\item $\Delta^d_k$ is  closed, if and only if it is unit-interval, if and only if it is (weakly) traceable, if and only if $k \le 2$.
\end{compactenum}
\end{lemma}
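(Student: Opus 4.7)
For part (a), my plan is to use the labeling that assigns $1, 2, \ldots, d$ to the vertices of $\Sigma^{d-1}$ and $d+1, \ldots, d+k$ to the $k$ apices, so that the facets become $F_j = 1\, 2 \cdots d\,(d+j)$ for $j = 1, \ldots, k$. Verifying the under-closed condition is then direct: given $F_j$, any candidate $1\, i_1 \cdots i_d$ with $i_s \le a_s$ for all $s$ must satisfy $1 < i_1 < \cdots < i_d$, which forces $i_s = s+1$ for $s = 1, \ldots, d-1$ and $i_d \in \{d+1, \ldots, d+j\}$; each of these is already a facet $F_{j'}$ with $j' \le j$, hence in $\Delta^d_k$.

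For part (b), the direction $k \le 2$ will be handled by explicit construction. For $k=1$ we have $\Delta^d_1 = \Sigma^d$, which is trivially closed, unit-interval and traceable. For $k=2$, I would label the two apices by $1$ and $d+2$ and the base $\Sigma^{d-1}$ by $2, 3, \ldots, d+1$, so that the two facets become exactly $H_1$ and $H_2$. This labeling is simultaneously traceable, unit-interval (each facet is already $d+1$ consecutive integers), and closed: the two facets $1\,2\cdots (d+1)$ and $2\,3\cdots (d+2)$ share no vertex in the same position, so the closed condition is vacuous.

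For the converse, my plan is to prove that $\Delta^d_k$ fails to be weakly-traceable whenever $k \ge 3$. Since $\Delta^d_k$ is strongly-connected (any two facets share the common $(d-1)$-face $\Sigma^{d-1}$), this will rule out traceability, and via Theorem~\ref{thm:Hierarchy} together with Main Theorem III (Theorem~\ref{thm:CTSC}) will also rule out the unit-interval and closed properties. Fix any labeling; let $S$ be the $d$-element set of labels assigned to $\Sigma^{d-1}$, and let $v_1, \ldots, v_k$ be the apex labels. The facets of $\Delta^d_k$ are precisely $F_j = S \cup \{v_j\}$, and since the complex is pure $d$-dimensional these are also its only $d$-faces. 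Any $H_i$ appearing in a weakly-Hamiltonian path must therefore equal some $F_j$, which requires $S \cup \{v_j\}$ to consist of $d+1$ consecutive integers. A short case analysis on the gap pattern of $S$ -- $S$ already a block of $d$ consecutive integers (then $v_j \in \{\min S - 1, \max S + 1\}$, at most two choices), $S$ with a single internal gap (only the integer filling the gap works), or $S$ with more than one gap (no $v_j$ works) -- shows that at most two facets qualify. Consequently a weakly-Hamiltonian path uses at most two facets and together covers at most $|S|+2 = d+2$ vertices, strictly fewer than the $d+k \ge d+3$ vertices of $\Delta^d_k$, the desired contradiction. The main combinatorial hinge of the whole argument is precisely this gap-pattern case analysis; everything else reduces to explicit labelings and bookkeeping.
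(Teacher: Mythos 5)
Your proposal is correct and follows essentially the same route as the paper: the same labeling of $\Sigma^{d-1}$ by $1,\ldots,d$ for under-closedness, the explicit consecutive labeling for $k\le 2$, and for $k\ge 3$ the observation that the complex cannot contain enough gap-zero faces to be (weakly-)traceable, after which strong connectivity plus Theorem~\ref{thm:CTSC} rules out the closed and unit-interval properties. Your gap-pattern case analysis simply makes explicit the step the paper states tersely (``the path of $k$ $d$-simplices is not a subcomplex''), so no further comment is needed.
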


\begin{proof} 
Let us label the vertices of $\Sigma^{d-1}$ by $1, 2, \ldots, d$. This labeling immediately shows that $\Delta^d_k$ is under-closed.  
Moreover, the $d$-complex $\Delta^d_k$  is strongly-connected. It has exactly $d+k$ vertices and $k$ facets. When $k \le 2$ its dual graph is a path, so clearly the obvious, consecutive labeling makes  $\Delta^d_k$ a closed, unit-interval, and traceable complex. But when $k \ge 3$, the ``path of $k$  $d$-simplices'' is not a subcomplex of $\Delta_d$. Hence, for $k \ge 3$ the complex $\Delta^d_k$ is not traceable,  not weakly-traceable, and not weakly-Hamiltonian. The fact that  $\Delta^d_k$ is neither unit-interval nor closed can be verified either directly, or using  Proposition \ref{prop:closed2} and Theorem \ref{thm:CTSC} below. 
\end{proof}

\begin{remark} \label{rem:G5} The $1$-skeleton of  $\Delta^2_3 = 123, 124, 125$ (cf.~Figure \ref{fig:ToyExamples1}) is the graph
\[ G_5 = 12, 13, 14, 15, 23, 24, 25 \]
which is under-closed by Lemma \ref{lem:skeleton}. It is not difficult 
to see that $G_5$ is the smallest $2$-connected interval graph that is not Hamiltonian. 
\end{remark}

\begin{lemma} \label{lem:StarredSquare}
Let $d\ge 2$ be an integer. Let $Q^{d}$ be the $d$-dimensional complex on $d+3$ vertices obtained by taking $d-1$ consecutive cones over the square. 
Then  $Q^{d}$ is weakly-closed, but not semi-closed.
\end{lemma}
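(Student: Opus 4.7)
Here is how I would approach Lemma~\ref{lem:StarredSquare}.

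\textbf{Weakly-closed.} I will exhibit an explicit labeling and check the condition case by case. Label the $d-1$ apex vertices by $2, 3, \ldots, d$, and place the four square vertices so that their cyclic order around the square is $1 \to d+1 \to d+2 \to d+3 \to 1$. The four facets, written in sorted order, become
\[
F_1 = \{1,\ldots,d+1\}, \quad F_2 = \{2,\ldots,d+2\}, \quad F_3 = \{2,\ldots,d,d+2,d+3\}, \quad F_4 = \{1,\ldots,d,d+3\}.
\]
Now $F_1$ and $F_2$ are consecutive, so they have no gaps. For $F_3$ the unique gap $g = d+1$ is covered by $F_2$, which is adjacent to $F_3$ through the ridge $\{2,\ldots,d,d+2\}$ and has $\max F_2 = d+2 \neq d+3 = \max F_3$. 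For $F_4$ the gap $g = d+1$ is covered by $F_1$ (adjacent via $\{1,\ldots,d\}$, different max) and the gap $g = d+2$ is covered by $F_3$ (adjacent via $\{2,\ldots,d,d+3\}$, different min).

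\textbf{Not semi-closed.} I would argue by induction on $d \ge 2$. For the base case $d=2$, the apex of $Q^2$ is distinguished (the unique vertex of degree $4$), so up to the symmetry $k \mapsto 6-k$ its label lies in $\{1,2,3\}$; together with the three (mod dihedral) cyclic arrangements of the remaining four labels, one gets nine configurations. In each of them I would write out the four triangles and point out a specific failing triangle: for instance, a triangle with minimum $1$ and maximum $5$ in configuration forces under (i) a triple of the form $\{1, 2, x\}$ which turns out to be either not a facet of $Q^2$ or to contain a square-diagonal pair, and symmetrically under (ii); the check is short and mechanical in each of the nine cases.

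For the inductive step $d \ge 3$, I would assume $Q^{d-1}$ is not semi-closed, and suppose for contradiction that $Q^d$ has a semi-closed labeling $\ell$. Since $d-1 \ge 2$, I can pick any apex vertex $v$. Every facet of $Q^d$ contains $v$, so $\mathrm{lk}_{Q^d}(v)$ has the $4$ facets $\{F \setminus v : F \in Q^d\}$, still has all four square vertices, and has one fewer apex; as an abstract complex it is $Q^{d-1}$. Carry $\ell$ over to the link by removing $\ell(v)$ and relabeling the remaining $d+2$ vertices by $\{1,\ldots,d+2\}$ in the induced order. The heart of the argument is the inheritance claim: if $F$ satisfies (i) in $Q^d$, then $G := F \setminus v$ satisfies (i) in the link. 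Given any link face $\widetilde G$ componentwise below $G$ with the same minimum (in the relabeled order), I re-insert $v$ at its correct sorted position in $\widetilde G \cup \{v\}$; a position-by-position check, split into the three cases $v = \min F$, $v$ strictly between the extremes of $F$, and $v = \max F$, shows that $\widetilde G \cup \{v\}$ is componentwise $\le F$ in $Q^d$ and has the same minimum as $F$. Condition (i) for $F$ then places $\widetilde G \cup \{v\}$ in $Q^d$, so $\widetilde G$ is in the link; the argument for (ii) is symmetric. Hence each link facet satisfies (i) or (ii), contradicting the inductive hypothesis on $Q^{d-1}$.

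\textbf{Main obstacle.} The non-trivial step is the inheritance bookkeeping, in particular the middle case where $v = a_s$ with $0 < s < d$: one has to verify that when coordinates $j_t$ of $\widetilde G$ satisfy $j_t \le (F \setminus v)_t$, inserting $v$ produces a set whose $s$-th sorted entry is $\le a_s$ regardless of whether $v$ ends up at position $s$ or is pushed further right. The base-case enumeration for $d=2$ is the other place where care is required; neither step is deep, but both need to be spelled out carefully.
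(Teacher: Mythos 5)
Your argument is correct, and for the ``not semi-closed'' half it takes a genuinely different (and stronger) route than the paper. The paper verifies weak-closedness of $Q^2$ and $Q^3$ with explicit labelings and then argues that appending further apices with consecutive labels preserves it; your uniform labeling (apices $2,\dots,d$, square $1,(d{+}1),(d{+}2),(d{+}3)$) with the direct three-case check is the same content, just packaged for all $d$ at once. The real divergence is in the second half: the paper only proves the descent ``$Q^d$ semi-closed with two \emph{consecutively labeled} apices $\Rightarrow$ $Q^{d-1}$ semi-closed'', invokes the pigeonhole principle to force consecutive apex labels only when $d\ge 7$, and delegates the remaining cases $Q^2,\dots,Q^6$ to software \cite{Pavelka}. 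Your link-of-an-apex descent works for an \emph{arbitrary} labeling, so the whole statement reduces to the hand-checkable base case $d=2$ and the computer verification disappears. The inheritance claim you flag as the main obstacle does hold: if $v$ lies in every facet and $\widetilde G$ is componentwise $\le F\setminus v$ with the same minimum, then $\widetilde G\cup\{v\}$ is componentwise $\le F$ with the same minimum --- this is immediate from the characterization that $A\le B$ componentwise (for sorted sets of equal size) if and only if $|A\cap[x,\infty)|\le |B\cap[x,\infty)|$ for every $x$, which sidesteps your position-by-position case split on where $v$ is inserted. The base case is also as mechanical as you claim (e.g.\ for apex label $2$ and $1,5$ rim-adjacent, the facet $125$ fails (i) because it would force three facets through the edge $12$, and fails (ii) even more badly); so the only cost of your approach relative to the paper's is writing out these finitely many checks, and the benefit is a fully self-contained proof.
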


\begin{proof}
Both $Q^2 = 123, 125, 234, 245$ and  $Q^3=1236, 1256, 2346, 2456$ are  weakly-closed. 
If we label further coning vertices using consecutive labels after $6$, we claim that the weakly-closed property is maintained. (This is not obvious, as the weakly-closed property is not maintained under arbitrary cones, cf.~Remark \ref{rem:cones}.) 
 In fact, since every face $F$ of $Q^3$ contains $6$, the gap of $F$ equals the gap of $F \cup \{7\}$, and the missing integers are the same, so the calculations proving weakly-closedness end up being the same for $Q^3$ and $Q^4$.  
 For the same reasons, one can show that if some $Q^{d}$ is semi-closed with a labeling that assigns consecutive labels to two apices, then $Q^{d-1}$ is semi-closed too. But if $d \ge 7$, $Q^d$ has $\ge 10$ vertices, and only $4$ of them are not apices; so necessarily two apices are assigned consecutive labels. So to complete the proof we only need to show that  $Q^2, Q^3, Q^4, Q^5$ and $Q^6$ are not semi-closed, which can be verified with \cite{Pavelka}.  
\end{proof}
 
\vskip3mm

\begin{lemma}\label{lem:SemiClosedGraph}
Let $\Delta$ be a pure $d$-complex where every vertex is in at most $k$ facets. 
\begin{compactenum}[\rm (1)]
\item In any labeling that makes $\Delta$ weakly-closed, every facet has gap $\le 2k-2$. 
\item In any labeling that makes $\Delta$ semi-closed, every facet has gap $\le k-1$. 
\\ If in addition $d=1$ and $\Delta$ is a $k$-regular graph, then in any labeling that makes $\Delta$ semi-closed,  the $k$ edges of the type 1j , with $2 \le j \le k+1$,  are all in $\Delta$; and so are all the $k$ edges of the type $\,in$, with $ n-k \le i \le n-1$.
\item In any labeling that makes $\Delta$ unit-interval, every facet has gap $\le g$, where $g$ is the largest integer such that $\binom{g+d}{d} \le k$; in particular, every facet has gap $ \le  \sqrt[d]{k d!} - 1$.
\end{compactenum}
\end{lemma}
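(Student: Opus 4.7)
All three bounds share a common strategy: fix a labeling of the required type, fix a facet $F = a_0 a_1 \cdots a_d$ of $\Delta$, and use the structural hypothesis to produce many distinct facets sharing either $a_0$ or $a_d$; the ``at most $k$ facets through a vertex'' assumption then caps the gap of $F$.

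\emph{Proof of (1).} Fix a weakly-closed labeling and a facet $F = a_0\cdots a_d$ of gap $g$. For each of the $g$ integers $h$ with $a_0 < h < a_d$ and $h \notin F$, the definition produces a $d$-face $G_h\in\Delta$ containing $h$, adjacent to $F$, with either $\min G_h \ne a_0$ or $\max G_h \ne a_d$. Adjacency forces $G_h = (F\setminus\{v_h\})\cup\{h\}$ for a unique $v_h\in F$. If $v_h = a_i$ with $1\le i\le d-1$, then $G_h$ still contains both $a_0$ and $a_d$, so $\min G_h=a_0$ and $\max G_h=a_d$, violating the condition; hence $v_h\in\{a_0,a_d\}$. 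The faces $G_h$ with $v_h=a_0$ all contain the $(d-1)$-face $a_1\cdots a_d$, and in particular $a_d$; since distinct $h$ give distinct $G_h$ and $F$ itself is one of the at most $k$ facets through $a_d$, there are at most $k-1$ such $G_h$. Symmetrically at most $k-1$ of them have $v_h=a_d$. Hence $g\le 2(k-1)$.

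\emph{Proof of (2).} For the gap bound, fix a semi-closed labeling and a facet $F=a_0\cdots a_d$ of gap $g$; by symmetry assume condition (i) of the semi-closed definition applies to $F$. For every gap vertex $h$, let $j$ be the unique index with $a_{j-1} < h < a_j$, and set
\[ S_h \; \eqdef \; \{a_1,\ldots, a_{j-1}, h, a_{j+1},\ldots, a_d\}. \]
Sorting $S_h$ increasingly as $i_1<\cdots<i_d$ gives $i_\ell\le a_\ell$ for every $\ell$, so by (i) the $d$-face $\{a_0\}\cup S_h$ is in $\Delta$. These facets are pairwise distinct and distinct from $F$, so $a_0$ lies in at least $g+1$ facets, forcing $g+1\le k$. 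For the graph refinement ($d=1$, $\Delta$ is $k$-regular), vertex $1$ has $k$ distinct neighbors in $\{2,\ldots,n\}$, so its largest neighbor $m$ satisfies $m\ge k+1$; the edge $\{1,m\}$ has gap $m-2$, and the bound just proven gives $m\le k+1$. So the neighbors of $1$ are exactly $\{2,\ldots,k+1\}$; the analogous argument applied to vertex $n$ handles the second claim.

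\emph{Proof of (3).} In a unit-interval labeling, the definition directly forces $\Delta$ to contain the full $d$-skeleton of the simplex on the $g+d+1$ consecutive vertices $\{a_0, a_0+1, \ldots, a_d\}$. Exactly $\binom{g+d}{d}$ of those $d$-faces contain $a_0$, and they all lie in $\Delta$, so the degree assumption gives $\binom{g+d}{d}\le k$. The ``in particular'' estimate follows from
\[ \binom{g+d}{d} \; = \; \prod_{i=1}^{d}\frac{g+i}{i} \; \ge \; \frac{(g+1)^d}{d!}. \]

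\emph{Main obstacle.} The only subtlety is in (1), where one must realize that the ``either $\min$ or $\max$ differs'' clause of the weakly-closed definition forbids the vertex $v_h$ being removed from $F$ from being one of the interior labels $a_1,\ldots,a_{d-1}$. Once that is in place, the bookkeeping splits cleanly into the two classes $v_h=a_0$ and $v_h=a_d$, each of which is controlled by the degree hypothesis applied to $a_d$ and $a_0$ respectively.
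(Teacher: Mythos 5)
Your proof is correct and follows essentially the same argument as the paper's: in (1) you classify the adjacent faces $G_h$ by whether they drop $a_0$ or $a_d$ and bound each class by the degree of the opposite endpoint, in (2) you exhibit $\operatorname{gap}F$ distinct facets through $a_0$ (or $a_d$) componentwise below (or above) $F$, and in (3) you count the $\binom{g+d}{d}$ faces of the filled-in simplex through a single vertex. The only cosmetic difference is that in (3) you anchor the count at $a_0$ while the paper anchors it at a gap vertex $i$; the binomial coefficient is the same either way.
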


\begin{proof} For any vertex $v$ of $\Delta$, let $\deg v$ be the number of facets of $\Delta$ containing it. For any facet $F$ of $\Delta$, let $S_F$ be the set of integers $i \notin F$ such that $\min F < i < \max F$. By definition, $S_F$ has cardinality  equal to $\operatorname{gap} F$. For brevity, set $a \eqdef \min F$ and $b \eqdef \max F$. 
\begin{compactenum}[(1)]
\item For every $i$ in $S_F$, there is  a face $G_i$ adjacent to $F$ that contains the vertex $i$ and exactly $d$ vertices of $F$, among which exactly one of $a, b$. Clearly as $i$ ranges over $S_F$, the $G_i$'s are all different. So $\deg a + \deg b \ge \operatorname{gap} F + 2$. (The summand $2$ is due to the fact that we should count also $F$ itself, once contributing to $\deg a$ and once to $\deg b$). Since $k \ge \deg a$ and $k \ge \deg b$, we conclude that $\operatorname{gap} F \le 2k - 2$.
\item For every $i$ in $S_F$, either $\Delta$ contains the $n_a \ge  \operatorname{gap} F+1$  facets (including $F$ itself) with minimum $a$ that are componentwise $\le F$, or  $\Delta$ contains the $n_b\ge  \operatorname{gap} F + 1$  facets (including $F$ itself) with maximum $b$ that are componentwise $\ge F$. Either way, there is a vertex $v$ (either $a$ or $b$) with $\deg v \ge  \operatorname{gap} F+1$. Since $\deg v \le k$ by assumption, we conclude that $\operatorname{gap} F \le k -1$.
So the first claim is settled. From this applied to $d=1$, it follows that 
\[ \{ \textrm{ edges of $\Delta$ containing $1$ } \} \ \subseteq \   \{ \; 1j \textrm{ such that }  2 \le j \le k+1 \;\}.\]
The two sets above have size $\deg 1$ and $k$, respectively. If $\Delta$ is $k$-regular, the two quantities are equal, hence the sets coincide. 
The same argument applies to the edges containing $n$. 
\item For every  $i$ in $S_F$, by definition of unit-interval, $\Delta$ contains the $\binom{\operatorname{gap} F + d}{d}$ $d$-faces that contain vertex $i$ and have vertices in $\{a, a+1, \ldots, b\}$.  So we must have $\binom{\operatorname{gap} F + d}{d} \le k$. In particular, since  $\binom{g+d}{d} \ge  \frac{(g+1)^d}{d!}$ for all positive integers $g,d$, we cannot have $\frac{(\operatorname{gap} F+1)^d}{d!} > k$. \qedhere
\end{compactenum}
\end{proof}

Our next Lemma is a $d$-dimensional version of the well-known fact that cycles of length $5$ or more are not co-comparability, cf.~Matsuda \cite{Matsuda}. 

\begin{lemma} \label{lem:dMatsuda}
For $n \ge 2d + 3$, the $d$-dimensional annulus $A^d_n \eqdef H_1, H_2, \ldots, H_n$ and
any $k$-skeleton of it are not weakly-closed.
\end{lemma}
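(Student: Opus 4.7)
The plan is a proof by contradiction. The pivotal structural fact about $A^d_n$ is that its dual graph is an $n$-cycle: each facet $F = H_i$ has exactly two adjacent facets, $H_{i-1}$ and $H_{i+1}$, whose vertices outside $F$ are only the two cyclic positions $i-1$ and $i+d+1$. Applying the weakly-closed definition to $F$, every gap-integer of $F$ must label one of these two vertices, forcing every facet of $A^d_n$ to have gap at most $2$ under any weakly-closed labeling $\ell$. Focusing on the vertex labeled $1$ at cyclic position $p_1$: the $d+1$ facets containing it cover the $2d+1$ cyclic positions at distance $\le d$ from $p_1$, and each such facet has $\min = 1$ and $\max \le d+3$. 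Hence these $2d+1$ positions carry labels from $\{1, \ldots, d+3\}$. For $d \ge 3$ this fails by pigeonhole ($2d+1 > d+3$), giving the contradiction for $A^d_n$.

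For $d = 2$ the counts match exactly ($5 = 5$), and the argument must be sharpened. The key refinement is that in a gap-$2$ facet $F = H_i$, the two gap-integers equal $\ell(v_{i-1})$ and $\ell(v_{i+d+1})$; the ``different min or max'' clause then forces both $\ell(v_i), \ell(v_{i+d})$ to lie in $\{\min F, \max F\}$, so these two endpoint positions carry the two extreme labels. Locating label $5$ among the five positions around $p_1$: the subcases $p_5 = p_1 \pm 1$ fail because the refinement applied to the resulting gap-$2$ facet ($H_{p_1-1}$ or $H_{p_1}$) would force label $1$ to occupy an endpoint distinct from $p_1$, violating injectivity. So WLOG $p_5 = p_1 + 2$, and $H_{p_1} = \{1, a, 5\}$ has gap-integers $\{2,3,4\} \setminus \{a\}$, which by weakly-closedness equal $\{\ell(p_1 - 1), \ell(p_1 + 3)\}$, and by the label inventory at positions $p_1 - 2, \ldots, p_1 + 2$ also equal $\{\ell(p_1 - 2), \ell(p_1 - 1)\}$. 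Comparing yields $\ell(p_1+3) = \ell(p_1-2)$; by injectivity, $n \mid 5$, contradicting $n \ge 7$.

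For each $k$-skeleton of $A^d_n$ and for the $d=1$ case, I would invoke Lemma~\ref{lem:skeleton}: weakly-closedness of the $k$-skeleton forces weakly-closedness of its $1$-skeleton, which equals the $1$-skeleton of $A^d_n$ itself (the circulant graph $C^d_n$), independent of $k$. So it suffices to show $C^d_n$ is not weakly-closed (equivalently, $\overline{C^d_n}$ is not a comparability graph) for $n \ge 2d+3$. For $d = 1$ this is Matsuda's classical fact from Remark~\ref{rem:matsuda}. For $d \ge 2$ and $n = 2d + 3$, $\overline{C^d_n}$ is exactly an $n$-cycle (since $\gcd(d+1, 2d+3) = 1$), an odd chordless cycle of length $\ge 5$, which obstructs comparability. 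The hardest part of the proof will be treating $d \ge 2$ and $n > 2d+3$ uniformly: here $\overline{C^d_n}$ is denser, with additional edges at cyclic distances $d+2, \ldots$, and I expect to need a direct construction of an induced odd-cycle obstruction (or an asteroidal-triple argument) to complete the proof.
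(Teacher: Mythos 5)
Your argument proves strictly less than the statement. The lemma asserts that \emph{every} $k$-skeleton of $A^d_n$ fails to be weakly-closed, and your dual-graph argument --- which is sound --- only applies to the top-dimensional complex $A^d_n$ itself: it hinges on each facet $H_i$ having exactly two neighbours $H_{i\pm 1}$ in the dual graph, which is what forces every facet to have gap at most $2$ in a weakly-closed labeling. For $k<d$ the dual graph of the $k$-skeleton is dense and this structure disappears, so you (correctly) fall back on Lemma~\ref{lem:skeleton} and reduce to showing that the $1$-skeleton, the circulant graph $C_n(1,\dots,d)$, is not weakly-closed. But there you only settle $d=1$ (by citation) and $n=2d+3$ (where the complement is an odd chordless $n$-cycle), and you explicitly leave $d\ge 2$, $n>2d+3$ open. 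That open case is the actual content of the lemma for the proper skeleta, and it is essentially what the paper's proof consists of: it reduces \emph{everything}, including $A^d_n$ itself, to the $1$-skeleton via Lemma~\ref{lem:skeleton}, rotates the labeling so that the smallest label sits at position $1$, and then derives a violation of the co-comparability condition by comparing the labels at positions $d+1$, $d+2$, $n-d$ and $n$. Without this step, or a substitute such as an explicit induced obstruction in $\overline{C_n(1,\dots,d)}$ valid for all $n\ge 2d+3$, the proposal does not prove the statement.

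The part you do prove is correct and genuinely different from the paper's route. The observations that a weakly-closed labeling of $A^d_n$ forces $\operatorname{gap}(H_i)\le 2$ for every $i$, that the $2d+1$ positions within cyclic distance $d$ of the vertex labeled $1$ must then carry labels in $\{1,\dots,d+3\}$ (killing $d\ge3$ by pigeonhole), and the endpoint refinement together with the $n\mid 5$ contradiction for $d=2$, all check out. This top-down argument exploits the rigidity of the dual cycle rather than passing to the $1$-skeleton, and it is an attractive complement to the paper's approach; but since the statement quantifies over all skeleta, the proposal as written is incomplete.
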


\begin{proof}
 By Lemma \ref{lem:skeleton}, it suffices to prove that the $1$-skeleton $G$ of $ A^d_n$ is not weakly-closed. By contradiction, let $a_1, \ldots, a_n$
be a re-labeling of the vertices $1, \ldots, n$ (respectively) that proves $G$ weakly-closed. Up to rotating the labeling cyclically, we can assume that $a_1$ is the smallest of the $a_i$'s. Since $n \ge 2d + 3$, in particular $n-d> d+2$, so the labels $a_{n-d}, a_{n-d+1}, \ldots, a_{n}, a_1, a_2, \ldots, a_{d+2}$ are all distinct. Were $a_{d+2} < a_n$, we would have a contradiction  with the weakly-closed assumption: $a_1a_n$ is in $G$, but neither $a_1a_{d+2}$ nor $a_{d+2}a_n$ is. So $a_n < a_{d+2}$. Symmetrically, were 
$a_{d+1}>a_{n-d}$, we would have a contradiction: $a_1a_{d+1}$ is in $G$, but neither $a_1a_{n-d}$ nor $a_{n-d}a_{d+1}$ is. So $a_{d+1} < a_{n-d}$. Now let us compare $a_{d+1}$ and $a_n$:
\begin{compactitem}[--]
\item If $a_{d+1} > a_n$, then   $a_n < a_{d+1} < a_{n-d}$ by what we said above; so we get a contradiction, because the edge $a_n a_{n-d}$ is in $G$, but neither $a_na_{d+1}$ nor $a_{d+1} a_{n-d}$ is. 
\item  If $a_{d+1} < a_n$, then   $a_{d+1} < a_n < a_{d+2}$ by what we said above; so symmetrically we get another contradiction, because  $a_{d+1}a_{d+2}$ is in $G$, but neither $a_{d+1}a_n$ nor $a_n a_{d+2}$ is. \endproof
\end{compactitem}
\end{proof}

\begin{remark}
$A^2_6$ is not weakly-closed, even if its $1$-skeleton is semi-closed \cite{Pavelka}. ($A^2_5$ instead is weakly-closed.) So the bound $n \ge 2d+3$ of Lemma \ref{lem:dMatsuda} is best possible in general, but  if one only cares about $A^d_n$ and not about its skeleta, then it can be improved.
\end{remark}

\vskip3mm

\begin{theorem} \label{thm:Hierarchy}
For each $d \ge 1$, for (pure) simplicial $d$-complexes, one has the hierarchy
\[  \{ \textrm{ \rm unit-interval } \} \subsetneq \{ \textrm{ \rm under-closed } \} \subsetneq \{ \textrm{ \rm semi-closed } \} 
\subsetneq \{ \textrm{ \rm weakly-closed } \} \subsetneq \{ \textrm{ \rm all } \} .\] 
\end{theorem}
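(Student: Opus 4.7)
The plan is to prove the four containments (in each case keeping the very labeling that witnesses the stronger property), and then derive strictness from the counterexamples already built in the preceding lemmas.

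For the first containment, I note that if $\Delta$ is unit-interval with some labeling $L$, then for every facet $F=a_0a_1\cdots a_d$ the complex contains the whole $d$-skeleton of the simplex on $\{a_0,a_0+1,\ldots,a_d\}$. Any face of the form $a_0i_1\cdots i_d$ with $a_0<i_1<\cdots<i_d$ and $i_k\le a_k$ for all $k$ has every vertex inside $\{a_0,a_0+1,\ldots,a_d\}$, hence lies in $\Delta$; so $\Delta$ is under-closed with the same labeling $L$. The containment ``under-closed $\subseteq$ semi-closed'' is immediate since condition~(i) of the semi-closed definition \emph{is} the under-closed condition, and ``weakly-closed $\subseteq$ all'' is tautological.

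The only non-trivial link is ``semi-closed $\subseteq$ weakly-closed''. Given a semi-closed labeling of $\Delta$, fix a facet $F=a_0\cdots a_d\in\Delta$ and an integer $g\notin F$ with $a_0<g<a_d$, and let $j\in\{0,\ldots,d-1\}$ be the unique index with $a_j<g<a_{j+1}$. If $F$ satisfies condition~(i) of the semi-closed definition, I take
\[
G\;\eqdef\;a_0\,a_1\cdots a_j\,g\,a_{j+1}\cdots a_{d-1},
\]
obtained from $F$ by deleting $a_d$ and inserting $g$ in the slot immediately after $a_j$. Writing $G=a_0i_1\cdots i_d$, a check in the three ranges $k\le j$, $k=j+1$, $k\ge j+2$ gives $i_k\le a_k$ throughout, so $G\in\Delta$ by (i); moreover $G$ is adjacent to $F$ (the two share $\{a_0,\ldots,a_{d-1}\}$), contains $g$, and has $\max G=a_{d-1}<a_d$. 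If instead $F$ satisfies condition~(ii), the symmetric choice
\[
G\;\eqdef\;a_1\cdots a_j\,g\,a_{j+1}\cdots a_d
\]
belongs to $\Delta$ by the mirror-image check (using $i_k\ge a_k$) and has $\min G=a_1>a_0$. Either way a witness for the weakly-closed condition is produced.

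Strictness of the four inclusions comes from examples already in hand. Lemma~\ref{lem:WheelComplex} provides $\Delta^d_3$, under-closed but not unit-interval; Lemma~\ref{lem:Bd} gives $S^d$, semi-closed but not under-closed; Lemma~\ref{lem:StarredSquare} supplies $Q^d$, weakly-closed but not semi-closed; and Lemma~\ref{lem:dMatsuda} shows that the $d$-dimensional annulus $A^d_n$ with $n\ge 2d+3$ is not weakly-closed, ruling out the last equality. The only step I expect to demand any thought is the semi-closed $\Rightarrow$ weakly-closed implication, and even there the subtlety is only combinatorial: one must delete the \emph{extremal} vertex ($a_d$ in case~(i), $a_0$ in case~(ii)), rather than the neighbour of $g$ inside $F$, so that adjacency with $F$ and a shifted max (resp.\ min) are secured simultaneously.
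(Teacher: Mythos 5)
Your four containment arguments are correct and essentially identical to the paper's: the first, second, and fourth are routine, and for ``semi-closed $\Rightarrow$ weakly-closed'' you produce exactly the same witnesses $G'=a_0\cdots a_j\,g\,a_{j+1}\cdots a_{d-1}$ and $G''=a_1\cdots a_j\,g\,a_{j+1}\cdots a_d$ that the paper uses. (One cosmetic slip: when $j=d-1$ the maximum of your first witness is $g$, not $a_{d-1}$; but all its vertices are still $<a_d$, so the weakly-closed condition is met either way.)

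The genuine gap is in the strictness part. The theorem is asserted for every $d\ge 1$, but Lemmas \ref{lem:WheelComplex}, \ref{lem:Bd} and \ref{lem:StarredSquare} — your sources for the strictness of the first three inclusions — are all stated only for $d\ge 2$, so your argument leaves the case $d=1$ of those three inclusions unproved. Only Lemma \ref{lem:dMatsuda} covers all $d\ge 1$. The paper treats $d=1$ separately: the claw $12,13,14$ is under-closed but not unit-interval; the $4$-cycle with labeling $12,13,24,34$ is semi-closed but not under-closed (and showing no \emph{other} labeling could work uses Lemma \ref{lem:SemiClosedGraph}(2)); and, most substantially, the complement $\overline{C_{2k}}$ of a long even cycle is weakly-closed (co-comparability) but not semi-closed — this last separation is the one the paper singles out as the interesting content of the theorem, since ``semi-closed'' is a new class already for graphs, and it requires a genuine regularity/degree argument via Lemma \ref{lem:SemiClosedGraph}(2) rather than a citation. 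You need to supply these one-dimensional examples (or restrict the claim to $d\ge 2$) for the proof to be complete.
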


\begin{proof} All inclusions are obvious except perhaps the third one. Let $F=a_0a_1\ldots a_d$ be a face of~$\Delta$. 
If $F$ satisfies condition (i) in the definition of semi-closed, and there is a $g$ such that  $a_i < g < a_{i+1}$, then $G' \eqdef a_0 a_1 \cdots a_i \: g \: a_{i+1} \cdots a_{d-1}$
is componentwise $\le F$ and thus belongs to $\Delta$; moreover, since  $\max G' < \max F$, the face $G'$ satisfies condition (i) in the definition of weakly-closed. If instead $F$ satisfies condition (ii) in the definition of semi-closed, and  $a_i < g < a_{i+1}$ for some $g$, then 
$G'' \eqdef a_1 \cdots a_i \: g \: a_{i+1} \cdots a_d$
is componentwise $\ge F$, so $G''$ is in $\Delta$; and since $\min G'' > \min F$, this $G''$ 
satisfies condition (ii) in the definition of weakly-closed.

\vskip2mm
\begin{figure}[htbp] 
\begin{center}
\includegraphics[scale=0.43]{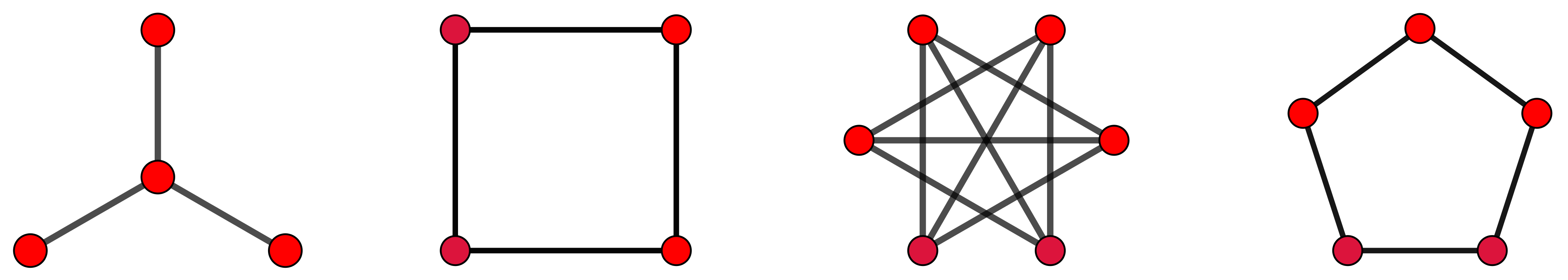} 
\caption{One-dimensional simplicial complexes that are: (i) Not unit-interval, but under-closed. (ii) Not under-closed, but semi-closed. (iii) Not semi-closed, but weakly-closed. (iv) Not even weakly-closed.}
\label{fig:Hierarchy}
\end{center}
\end{figure}

Next, we discuss the strictness of the inclusions, which is the interesting part of the theorem.  
\begin{compactenum}[ (i) ] 
\item For $d=1$, the claw graph $12, 13, 14$ is under-closed only with this labeling, which is not unit-interval because for example $23$ is missing. \\For $d \ge 2$,  strictness follows by Lemma~\ref{lem:WheelComplex}.  
\item For $d=1$, the $4$-cycle is semi-closed with the labeling $12, 13, 24, 34$. By Lemma \ref{lem:SemiClosedGraph}, part (2), \emph{only} this labeling makes the $4$-cycle semi-closed. This labeling is not under-closed, because $24$ is an edge, but $23$ is not. More generally, for any $n \ge 4$, one can show that the graph $\operatorname{susp}(A_{n-2})$ of Remark \ref{rem:An} is semi-closed (with the suspension apices labeled by $1$ and $n$), but not under-closed. \\For $d \ge 2$, the strictness of the inclusion follows by  Lemma \ref{lem:Bd}. 
\item 
For $d=1$: Since $C_{2k}$ is a comparability graph (it is the nonempty-face poset of the $k$-gon),  $\overline{C_{2k}}$ is co-comparability. 
We claim that $\overline{C_{2k}}$ is not semi-closed for any $k \ge 3$. For notational simplicity, we give the proof for $k=3$; the case of arbitrary $k$ has a completely analogous proof. Suppose by contradiction that $\overline{C_{6}}$ has a semi-closed labeling. Since $C_6$ is $2$-regular,  its complement is $(6-1-2)$-regular, i.e. $3$-regular. By Lemma \ref{lem:SemiClosedGraph}, part (2), all of $12, 13, 14$ and $36, 46, 56$ are edges. In contrast, $15$, $16$ and $26$ are not edges, again by  Lemma~\ref{lem:SemiClosedGraph}. But then $25$ must be an edge of $\overline{C_{6}}$, for otherwise $15$, $16$, $26$ and $25$ would form a $4$-cycle inside the complement, which is $C_{6}$. We claim that this edge $25$ cannot satisfy the semi-closed condition. In fact, if all of $23, 24, 25$ were edges, together with $12$ we would have $4$ edges containing vertex $2$, contradicting $3$-regularity; and similarly, if all of $25, 35, 45$ were edges, counting also $56$ we would have $4$ edges containing vertex $5$.\\
This shows strictness of the inclusion for $d=1$; the case $d \ge 2$ is settled by Lemma \ref{lem:StarredSquare}. 
\item
For any $d \ge 1$, this is settled by Lemma \ref{lem:dMatsuda}. \qedhere
\end{compactenum}
\end{proof}

\vskip4mm
\subsection{Shortest dual paths and relation with traceability} \label{sec:ClosedTraceable}
As we saw in Lemma \ref{lem:bouquet}, there exist complexes like $U^2_3= 124, 345, 467$ that are closed but not weakly-closed. So at this point we owe the reader some explanation: Why did we (and before us, Matsuda \cite{Matsuda} and others, in the $1$-dimensional case) choose to call ``weakly-closed'' a property not implied by ``closed''? Here is the reason. 
We are going to show that \emph{all strongly-connected closed complexes are unit-interval} (Proposition \ref{prop:closed2}), so in particular under-closed, semi-closed, and weakly-closed. We will then prove that all such complexes are traceable (Theorem \ref{thm:CTSC}), which can be viewed as a higher-dimensional generalization of the graph-theoretical results by Bertossi \cite{Bertossi} and Herzog et al's \cite[Proposition 1.4]{HerzogEtAl}. The key to our generalization is  to focus on shortest paths in the \emph{dual} graph.

\begin{definition}
Let $F$ be a facet a pure $d$-dimensional simplicial complex $\Delta$. Let $v$ be a vertex of $\Delta$. A \emph{shortest path between $F$ and $v$} is  a path in the dual graph of $\Delta$ of minimal length from $F$ to some facet containing $v$. The \emph{distance} between $F$ and $v$ is the length of a shortest path, if any exists, or $+ \infty$, otherwise.
\end{definition}

\begin{definition} Let $\Delta$ be a pure $d$-dimensional simplicial complex, with vertices labeled from $1$ to $n$. 
A path $ F_0, \; F_1,  \: \ldots, \; F_\ell$ in the dual graph of $\Delta$ is called  \emph{ascending}, if each $F_i$ is obtained from $F_{i-1}$ by replacing the smallest vertex of   $F_{i-1}$, with a vertex greater than all remaining vertices of  $F_{i-1}$. A path is called \emph{descending},  if the reverse path is ascending.
\end{definition}

For example, suppose that a $2$-complex $\Delta$ contains the facets $124, 245, 456$, and $356$. The dual path they form is not ascending  -- or better, it is ascending, except for the last step. Such dual path demonstrates that the vertex $v=3$ is at distance $\le 3$ from $124$.  Now suppose that we know in advance that $\Delta$ is closed: Then from $356, 456 \in \Delta$, we immediately derive that $\Delta$ must contain the whole $2$-skeleton of the simplex $3456$. Note that the same conclusion could be reached also if we knew in advance that $\Delta$ is \emph{unit-interval}, rather than closed.  Either way: $\Delta$ contains the facet $G=345$ which contains $3$ and is adjacent to $245$. So  $124, 245, 345$ yields a ``shortcut'' to the original path, thereby proving that $v=3$ is actually at distance $\le 2$ from $124$. And it gets even better: Since $245$ and $345$ are in $\Delta$, by the closed assumption (or the unit-interval assumption) on $\Delta$, we may conclude that  $\Delta$ contains the whole $2$-skeleton of the simplex $2345$. So also $234$ is in $\Delta$, which means that $v=3$ is at distance $1$ from $124$. 

This example generalizes as follows, in what can be viewed as a higher-dimensional version of Cox--Erskine's narrowness property \cite{CoxErskine}:

\begin{lemma} \label{lem:closed}
Let $\Delta$ be a pure $d$-dimensional simplicial complex, with a labeling that makes it either closed or unit-interval. Let $F=a_0a_1\cdots a_d$ be a facet of $\Delta$. Let $v$ be  a vertex. If the distance between $F$ and $v$ is a finite number $\ell \ge 2$, then 
\begin{compactitem}
\item either there is a shortest path from $F$ to $v$ that is ascending (and thus $v > a_d$),
\item or there is a shortest path from $F$ to $v$ that is descending (and thus $v < a_0$).
\end{compactitem}
If instead $a_0 < v < a_d$, and some facet containing $v$ is in the same strongly-connected component of $F$, then the distance between $F$ and $v$ is at most one, and $\Delta$ contains the whole $d$-skeleton of the simplex on the vertex set $F \cup \{ v \}$.
\end{lemma}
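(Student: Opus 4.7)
The plan is to prove both statements together by strong induction on the dual-graph distance $\ell = \mathrm{dist}(F, v)$. The base case $\ell = 0$ is immediate; for $\ell = 1$, pick an adjacent facet $G = (F \setminus \{a_j\}) \cup \{v\}$ and observe that if $v > a_d$ and $j = 0$ (or the symmetric descending configuration) the single step is already ascending (resp.\ descending). In every other sub-case, $F$ and $G$ share either their minimum or their maximum in the same sorted position, so Remark~\ref{rem:almost} forces the full $d$-skeleton of $F \cup G = F \cup \{v\}$ into $\Delta$. From this skeleton I extract the ascending replacement $\{a_1, \ldots, a_d, v\}$ (if $v > a_d$), the descending replacement $\{v, a_0, \ldots, a_{d-1}\}$ (if $v < a_0$), or directly the third claim (if $a_0 < v < a_d$).

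For the inductive step $\ell \geq 2$, take any shortest path $F_0 = F, F_1, \ldots, F_\ell$ with $v \in F_\ell$ and apply the inductive hypothesis to $(F_1, v)$ at distance $\ell - 1$. The key observation is: if $\min F_1 < v < \max F_1$, the hypothesis puts the $d$-skeleton of $F_1 \cup \{v\}$ into $\Delta$; since $F_1$ shares $d$ vertices with $F_0$, that skeleton contains a facet adjacent to $F_0$ and containing $v$, forcing $\mathrm{dist}(F_0, v) \leq 1$ and contradicting $\ell \geq 2$. This single argument disposes of the ``$a_0 < v < a_d$'' clause of the lemma, as a short check shows that in this setting $v$ is always forced into $(\min F_1, \max F_1)$ or into $F_1$ itself (because $F_1$ shares $d$ vertices with $F_0$). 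Hence for the main statement we may assume $v > a_d$ (by symmetry) and $v > \max F_1$, and the inductive hypothesis produces an ascending shortest path $F_1 = H_0, H_1, \ldots, H_{\ell - 1}$ from $F_1$ to a facet containing $v$.

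If the first step $F_0 \to F_1$ is itself ascending, I conclude by prepending. Otherwise the step is non-monotone, so $F_0$ and $F_1$ share their minimum or their maximum, and Remark~\ref{rem:almost} yields the $d$-skeleton of $F_0 \cup F_1$ inside $\Delta$. I then produce an ascending replacement $F_1^\uparrow$ of $F_1$: when the new vertex $w$ of the first step already exceeds $a_d$, take $F_1^\uparrow = \{a_1, \ldots, a_d, w\}$, which lies in the skeleton just produced; when $w \leq a_d$, take instead $F_1^\uparrow = \{a_1, \ldots, a_d, x_1\}$ where $x_1 > a_d$ is the vertex added in the first ascending step $F_1 \to H_1$, and verify via a second application of Remark~\ref{rem:almost} to the pair $(F_0, H_1)$ that $F_1^\uparrow \in \Delta$. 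In either case $F_0 \to F_1^\uparrow$ is an ascending step and $F_1^\uparrow$ is adjacent to $H_1$ (they share $d$ common vertices), which gives $\mathrm{dist}(F_1^\uparrow, v) = \ell - 1$; the inductive hypothesis applied to $F_1^\uparrow$ (for which $v > \max F_1^\uparrow$) then produces an ascending continuation, completing the desired ascending shortest path.

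The main obstacle I expect is the chain of adjacencies in the ``$w \leq a_d$'' sub-case, where one must check in a short case analysis (indexed by the position $j$ of the removed vertex and the location of $w$ relative to the $a_i$'s) that $F_0$ and $H_1$ share enough structure in the same sorted position for the second application of Remark~\ref{rem:almost} to produce $F_1^\uparrow \in \Delta$ and adjacent to $H_1$. The edge case $\ell = 2$ in this sub-case collapses cleanly: it would force $v$ to land inside $F_1^\uparrow$ itself, contradicting $\ell = 2$, and thus ensures the delicate recursive step is only invoked for $\ell \geq 3$, where the induction genuinely has room to work.
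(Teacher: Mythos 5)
Your overall strategy (induction on $\ell$, repairing the path at its \emph{first} step) diverges from the paper's proof, which instead performs surgery at the \emph{end} of a shortest path: there, if the path stops being monotone at the step into the last facet $F_{i+1}\ni v$, Remark~\ref{rem:almost} puts the $d$-skeleton of $F_i\cup F_{i+1}$ into $\Delta$, and the face $(F_{i-1}\cap F_i)\cup\{v\}$ is then a facet through $v$ adjacent to $F_{i-1}$, shortening the path --- a contradiction that never requires reconnecting a repaired facet to the remainder of the path. Your version runs into exactly the difficulty this placement avoids, and the spot you flag as ``the main obstacle'' is a genuine obstruction, not a routine case check. In the sub-case $w\le a_d$ you propose to obtain $F_1^{\uparrow}=\{a_1,\ldots,a_d,x_1\}\in\Delta$ by applying Remark~\ref{rem:almost} to the pair $(F_0,H_1)$. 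But $H_1=(F_1\setminus\{\min F_1\})\cup\{x_1\}$ generically shares only $d-1$ vertices with $F_0$ and has $\min H_1>\min F_0$ and $\max H_1=x_1>a_d=\max F_0$, so it shares neither extreme with $F_0$ and the remark simply does not apply to that pair. (Moreover, when the vertex dropped in the first step is $a_d$, one only knows $x_1>\max F_1$ with $\max F_1<a_d$, so $x_1>a_d$ can fail and $F_1^{\uparrow}$ need not even be an ascending replacement.) I do not see how to complete this branch along your lines without importing the paper's end-of-path shortcut.

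Two further concrete problems. First, the ``short check'' you invoke to dispose of the clause $a_0<v<a_d$ is false: take $d=2$, $F_0=\{1,5,9\}$, $v=3$ and $F_1=\{5,9,20\}$; then $F_1$ shares $d$ vertices with $F_0$, yet $v<\min F_1$ and $v\notin F_1$. The failure occurs precisely when $v$ lies in the first (or last) gap of $F_0$ and the corresponding extreme vertex of $F_0$ is the one dropped in the first step, and in that situation your Case~A contradiction is unavailable; the hypothesis that $\Delta$ is closed or unit-interval must be used there, and you have not used it. Second, ``the step is non-monotone, so $F_0$ and $F_1$ share their minimum or their maximum'' silently excludes the possibility that the first step is \emph{descending} while $v>a_d$; that case does not share an extreme, so Remark~\ref{rem:almost} is unavailable for $(F_0,F_1)$ and needs a separate (short, but missing) argument. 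Taken together, these gaps mean the proposal does not yet constitute a proof.
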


\begin{proof} Let  \[F= F_0, \ldots, F_{i-1}, \; F_i, \; F_{i+1}\] be a shortest path from $F$ to a vertex $v \in F_{i+1}$. Suppose the path is ascending until $F_i$, but it stops being ascending when passing from $F_i$ to $F_{i+1}$. This means that $\max F_i = \max F_{i+1}$. By Remark~\ref{rem:almost}, $\Delta$ contains the whole $d$-skeleton of the simplex with vertex set $F_i \cup F_{i+1}$. In particular, if we set $\gamma \eqdef F_{i-1} \cap F_i$, the complex $\Delta$ contains  $G \eqdef \gamma \cup v$. But since $G$ is a $d$-face that contains $v$ and is already adjacent to $F_{i-1}$, \[F= F_0, \ldots, \; F_{i-1}, \; G\]
is a shorter path from $F$ to $v$ than the one we started with, a contradiction. 
The same argument applies to descending paths. If instead $a_0 < v < a_d$, clearly there cannot be any ascending or descending path from $F$ to $v$. So either $v \in F$, in which case the distance from $F$ to $v$ is $0$ and there is nothing to prove, or $v \notin F$, in which case the distance is $1$. In the latter case, $F$ and the adjacent face $G$ containing $v$ have same maximum, so again by Remark \ref{rem:almost} the complex $\Delta$ contains the $d$-skeleton of the simplex on $F \cup G = F \cup \{v \}$. 
\end{proof}

\begin{proposition} \label{prop:closed2} All strongly-connected closed simplicial complexes are unit-interval. 
\end{proposition}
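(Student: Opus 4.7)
The approach is to show that the same labeling that witnesses $\Delta$ as closed also witnesses it as unit-interval. Fix such a labeling. For any facet $F = a_0 a_1 \cdots a_d$ of $\Delta$, write $W := \{a_0, a_0+1, \ldots, a_d\}$; it suffices to prove that every $(d+1)$-subset of $W$ is a face of $\Delta$. I would proceed by strong induction on $m := |W| = a_d - a_0 + 1 = d + 1 + \operatorname{gap}(F)$. The base case $m = d + 1$ is trivial, since then $W = F \in \Delta$.

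For the inductive step ($m \ge d+2$), fix an arbitrary $T = \{t_0 < t_1 < \cdots < t_d\} \subseteq W$ and split into three cases. \emph{Case A} ($a_d \notin T$): using strong-connectedness, invoke Lemma~\ref{lem:closed} on $F$ with an interior vertex $v \in (a_0, a_d) \setminus F$ chosen so that $\max(a_{d-1}, v) \ge t_d$ (if $a_{d-1} \ge t_d$ any interior $v$ works; otherwise $t_d \in (a_{d-1}, a_d - 1]$ lies outside $F$, so take $v := t_d$). The lemma deposits the entire $d$-skeleton of $F \cup \{v\}$ into $\Delta$; in particular $F_d := (F \cup \{v\}) \setminus \{a_d\} \in \Delta$, with $T \subseteq I(F_d) \subseteq [a_0, a_d - 1]$, so $|I(F_d)| \le m - 1$ and the inductive hypothesis gives $T \in \Delta$. \emph{Case B} ($a_0 \notin T$) is symmetric, using $F_0 := (F \cup \{v\}) \setminus \{a_0\}$. \emph{Case C} ($a_0, a_d \in T$): since $|W \setminus T| = m - (d+1) \ge 1$ and $a_0, a_d \in T$, pick some $u \in W \setminus T$, which is automatically strictly between $a_0$ and $a_d$. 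Set $T' := (T \setminus \{a_d\}) \cup \{u\}$; this $(d+1)$-subset avoids $a_d$, so Case A yields $T' \in \Delta$. Now $F$ and $T'$ are both facets of $\Delta$ sharing the same $0$-th coordinate $a_0$, so closedness forces $\Delta$ to contain the $d$-skeleton of the simplex on $F \cup T'$. Since $T \subseteq F \cup T'$ and $|T| = d+1$, we conclude $T \in \Delta$.

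The main expected obstacle is Case C. A target $T$ containing both $a_0$ and $a_d$ has $I(T) = W$ and cannot be pushed into any strictly smaller interval hull for a direct inductive shrinking. The resolution is a two-step maneuver: first deploy Case A (where Lemma~\ref{lem:closed} and strong-connectedness carry all the weight) to manufacture an auxiliary facet $T' \in \Delta$ that matches $F$ at position~$0$, then exploit the full strength of \emph{closedness} --- not just unit-interval --- to promote $T' \in \Delta$ to $T \in \Delta$ via position-matching. This is the one spot where the closed hypothesis strictly exceeds the unit-interval conclusion we are proving, which clarifies why strong-connectedness paired with closedness is the correct set of assumptions.
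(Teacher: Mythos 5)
Your proof is correct. It runs on the same engine as the paper's --- the second part of Lemma~\ref{lem:closed}, which combines strong-connectedness with the closed condition to deposit the whole $d$-skeleton of $F \cup \{v\}$ into $\Delta$ for any $v$ strictly between $\min F$ and $\max F$ --- but the induction is organized differently. The paper keeps the facet $F$ fixed and inducts on the number $m$ of gap vertices adjoined, handling an arbitrary $(d+1)$-subset $H$ of $F \cup \{g_1,\ldots,g_m\}$ uniformly by a single-vertex swap (chosen so that the swapped-out vertex $w$ lies strictly inside the hull of the swapped-in set $H'$) followed by one more application of Lemma~\ref{lem:closed} to $H'$ and $w$; the closed hypothesis enters only through that lemma and Remark~\ref{rem:almost}. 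You instead induct on the hull length $a_d - a_0 + 1$ ranging over all facets, descend to genuinely new facets $F_d$, $F_0$ with strictly smaller hulls, and isolate the subsets containing both endpoints as a separate case, resolved by an explicit position-$0$ match between $F$ and the auxiliary facet $T'$. Both decompositions work; yours makes it slightly more visible where the full closed condition (rather than just its consequence Lemma~\ref{lem:closed}) is being spent, at the cost of having to track that the intermediate faces are facets with controlled hulls, which the paper sidesteps by never leaving the fixed ambient set $F \cup \{g_1,\ldots,g_m\}$.
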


\begin{proof}  Let $\Delta$ be a strongly-connected $d$-dimensional simplicial complex that is closed with respect to some-labeling. Let $F=a_0 a_1 \cdots a_d \in \Delta$. We claim the following:
\begin{compactitem}[ (*) ]
\item If there exist $m  \in \{1, \ldots, d\}$  and  $g_1, \ldots, g_m$ not in $F$, with $a_0 < g_1 < g_2 < \ldots < g_m < a_d$, then $\Delta$ contains the $d$-skeleton of the simplex with vertex set  $\{a_0, \ldots, a_d, g_1, \ldots, g_m\}$.
\end{compactitem}
If $\operatorname{gap}(F) = 0$, then the implication is trivially true, because the antecedent is never verified. So suppose $\operatorname{gap}(F) > 0$, and let us proceed by induction on $m$. 

For $m=1$: Pick a vertex $g$ of $\Delta$ not in $F$, with $a_0 < g < a_d$.  Since $\Delta$ is strongly connected, by the second part of Lemma~\ref{lem:closed} the complex $\Delta$ has a facet $G$ that contains $g$ and is adjacent to $F$.  Had $G$ neither same minimum nor same maximum of $F$, then either $G=a_1a_2 \cdots a_d g$ or $G=ga_0 a_1 \cdots a_{d-1}$. But both cases contradict the assumption $a_0 < g < a_d$. Hence, $F$ and $G$ have either same minimum or same maximum (or both), so they share at least one vertex in the same position. Since $\Delta$ is closed, $\Delta$ contains the $d$-skeleton of the simplex on $F \cup G = F \cup \{g \}$. 

For $m>1$: let $H$ be a subset of $\{a_0, \ldots, a_d, g_1, \ldots, g_m\}$ of cardinality $d+1$. If $H$ contains at most $m-1$ elements of $\{g_1 \ldots, g_m\}$, then we know that $H \in \Delta$ by the inductive assumption. If $g_1, \ldots, g_m$ are all vertices of $H$, let us consider a new face $H'$ with exactly the same vertices of $H$, except for one replacement, to be decided as follows: 
\begin{compactitem}
\item If $\min H = a_0$ and $\max H = a_d$, we shall replace $g_1$ with any vertex $v$ of $F$ that is not in~$H$. This way, since $a_0 \le v \le a_d$, we have that as real intervals \[(\min H , \max H)  = (a_0, a_d) = (\min H' , \max H').\] 
\item If $\min H = g_1$, or if $\min H = a_i$ for some $i >0$, we shall replace $g_1$ with $a_0$. This way \[(\min H , \max H) \subsetneq (a_0, \max H) = (\min H', \max H').\]
\item If $\max H = g_m$, or $\max H = a_i$ for some $i<d$, we shall replace $g_m$ with $a_d$. This way \[(\min H , \max H) \subsetneq (\min H, a_d) = (\min H', \max H').\]
\end{compactitem}
In all three cases, if $w$ is the only element that belongs to $H$ but not to $H'$, then $w$ is either $g_1$ or $g_m$, and we have  \[\min H' < w < \max H'.\] Moreover, $H'$ contains at most $m-1$ elements of $\{g_1 \ldots, g_m\}$, 
so by the inductive assumption  $H'$ is in $\Delta$. But since $\min H' < w < \max H'$, by the second part of Lemma \ref{lem:closed} we conclude that also $H$ is in $\Delta$. By the genericity of $H$, this proves Claim (*). From the Claim the conclusion follows immediately, by choosing $m$ maximal.
\end{proof}

\begin{remark}
The converse is false: The complex with $k$ disjoint $d$-simplices is obviously not strongly-connected, yet it is unit-interval with the natural labeling below:
\[\Delta = H_1, \: H_{d+2}, \: H_{2d+3},  \ldots, H_{(k-1)d+k}.\] 
For connected graphs, it is obvious that ``closed'' and ``unit-interval'' are the same: This is noticed also in Matsuda~\cite[Proposition 1.3]{Matsuda} and Crupi--Rinaldo \cite{Rinaldo}. However, as we saw in Lemma \ref{lem:Bd}, higher-dimensional complexes that are both strongly-connected and unit-interval might not be closed. 
\end{remark}

We have arrived to the main result of this section, the generalization of Bertossi's theorem:

\begin{theorem}[Higher-dimensional Bertossi] \label{thm:CTSC}
Let $\Delta$ be a pure $d$-dimensional simplicial complex that is either closed or unit-interval. Then 
\[ \Delta \textrm{ is strongly-connected }\:  \ \Longleftrightarrow \ \: \Delta \textrm{ is traceable.}  \] 
\end{theorem}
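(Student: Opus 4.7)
The plan is to prove both implications with respect to the unit-interval labeling $\lambda$, reducing the ``closed'' hypothesis to the ``unit-interval'' one via Proposition~\ref{prop:closed2} whenever strong connectedness is already in hand. The central reformulation is that $\Delta$ is traceable with labeling $\lambda$ precisely when every gap-zero facet $H_j^\lambda$ lies in $\Delta$, so both directions come down to producing or exploiting these $H_j^\lambda$'s.

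For the direction SC $\Rightarrow$ traceable, I aim to show $H_j^\lambda\in\Delta$ for every $j\in\{1,\ldots,n-d\}$. The boundary cases are immediate: the vertex $\lambda$-labeled~$1$ lies in a facet whose $\lambda$-minimum equals~$1$, so unit-interval applied to this facet places the entire $d$-skeleton of its $\lambda$-extent (in particular $H_1^\lambda$) inside $\Delta$; an analogous argument yields $H_{n-d}^\lambda\in\Delta$. For an interior index, I argue by contradiction: let $j$ be the smallest index with $H_j^\lambda\notin\Delta$, so that $H_{j-1}^\lambda\in\Delta$. By strong-connectedness, there exists a dual path from $H_{j-1}^\lambda$ to some facet containing the vertex $\lambda$-labeled~$n$; since $n>\max_\lambda H_{j-1}^\lambda=j-1+d$, Lemma~\ref{lem:closed} forces a shortest such path to be ascending. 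Its first step replaces the label $j-1$ by some $v_1>j-1+d$, yielding a facet $F_1$ with $\lambda$-labels $\{j,j+1,\ldots,j-1+d,v_1\}$ and $v_1\ge j+d$. Applying unit-interval to $F_1$ forces the $d$-skeleton of $[j,v_1]_\lambda$ into $\Delta$; this skeleton contains $H_j^\lambda$, contradicting our choice.

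For the direction traceable $\Rightarrow$ SC, let $\mu$ witness traceability, so $H_1^\mu,\ldots,H_{n-d}^\mu$ form a strongly-connected subcomplex covering all $n$ vertices. My plan is first to show that one can take $\lambda=\mu$ (or its reverse): the $\lambda$-extents $E_i=[\min_\lambda H_i^\mu,\max_\lambda H_i^\mu]$ are intervals of length at least $d+1$ that pairwise overlap (because consecutive $H_i^\mu, H_{i+1}^\mu$ share $d$ vertices, hence $d$ common $\lambda$-labels) and jointly cover $[1,n]$; applying unit-interval to each $H_i^\mu$ populates $\Delta$ with enough of the $d$-skeleta of the $E_i$'s to recover $H_j^\lambda\in\Delta$ for every $j$. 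Once every gap-zero facet in the $\lambda$-labeling is present, any facet $F\in\Delta$ with $\lambda$-labels $a_0\cdots a_d$ is connected by the $d$-skeleton of $[a_0,a_d]_\lambda$ (which lies in $\Delta$ by unit-interval, and is itself strongly connected) to the gap-zero facet $H_{a_0}^\lambda$, which belongs to the main dual path. Hence $\Delta$ is strongly connected.

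The main obstacle is the reconciliation step in the second direction: a priori the labelings $\lambda$ and $\mu$ need not coincide, yet the argument needs $H_j^\lambda\in\Delta$ to pass from the unit-interval $d$-skeleton at~$F$ to the path of $H_i^\mu$'s. Overcoming this hinges on the rigidity of unit-interval labelings, exploited through the overlap structure of the $E_i$'s sketched above; this is where I expect most of the technical work to concentrate.
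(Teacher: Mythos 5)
Your forward direction (strongly connected $\Rightarrow$ traceable) is essentially the paper's own argument: fix the unit-interval labeling, obtain $H_1$ and $H_{n-d}$ from purity, and propagate from $H_{j-1}$ to $H_j$ by taking an ascending shortest dual path and applying the unit-interval condition to its first facet. One loose end: Lemma~\ref{lem:closed} only guarantees an ascending shortest path when the distance is at least $2$, so you must separately handle the case where a facet adjacent to $H_{j-1}$ already contains your target vertex; that facet then has minimum $j-1$ or $j$ and maximum $n$, so the unit-interval condition applied to it still produces $H_j$. This is a one-line fix.

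The backward direction is where you genuinely depart from the paper, and largely for the better. The paper's proof of ``traceable $\Rightarrow$ strongly connected'' tacitly assumes that the labeling witnessing traceability and the labeling witnessing closedness/unit-intervality coincide (it needs $H_{\min F}$ to be a face of $\Delta$, which only traceability in that \emph{same} labeling provides). You correctly isolate this reconciliation as the crux, and your extent argument does close it in the unit-interval case: the extents $E_i$ are intervals of length at least $d+1$, consecutive ones overlap in at least $d$ points, and their union is $[1,n]$; from these three facts every window $[j,j+d]_\lambda$ must lie in some $E_i$ (otherwise each $E_i$ is contained in $[1,j+d-1]$ or in $[j+1,n]$, consecutive extents of different types would overlap in at most $d-1$ points, so all extents are of one type, and then the union misses $n$ or misses $1$). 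So the combinatorial lemma you defer is true and short, and the rest of your argument (every facet reaches $H^\lambda_{a_0}$ through the strongly connected skeleton of its extent) is sound.

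The genuine gap is the ``closed'' half of the hypothesis in the backward direction. You propose to reduce closed to unit-interval via Proposition~\ref{prop:closed2}, but that proposition requires strong connectedness, which in this direction is the conclusion rather than the hypothesis; and your extent argument really uses the full unit-interval condition (a closed complex such as $U^2_3=124,345,467$ does not contain the $d$-skeleta of the extents of its facets). For a closed, traceable complex not yet known to be strongly connected you need a separate argument. The paper's route is Remark~\ref{rem:almost}: $F$ and $H_{\min F}$ share their minimum, hence the closed condition already yields the $d$-skeleton on their union; but this again presupposes that the closed labeling itself is traceable, so some analogue of your labeling-reconciliation step, adapted to the weaker closed condition, is still required to finish this case.
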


\begin{proof}\hspace*{\fill}
\begin{compactdesc}
\item{$\Leftarrow$:}  Let $F$ be a $d$-face of $\Delta$. We want to find a walk from $\Delta$ to $H_1$ in the dual graph. If $\operatorname{gap} F= 0$, then $F=H_j$ for some $j$, and $H_1, H_2, \ldots, H_j$ is the desired path. If $\operatorname{gap} F> 0$, let $i \eqdef \min F$. Since $F$ and $H_i$ have same minimum, by Remark  \ref{rem:almost} $\Delta$ contains the whole $d$-skeleton of the simplex on $F \cup H_i$. But the $d$-skeleton of a higher-dimensional simplex is strongly-connected, which means that in the dual graph of $\Delta$ we can walk from $F$ to $H_i$.  And since $H_i$ has gap $0$, we can walk from it to $H_1$.
\item{$\Rightarrow$:}   Fix a labeling for which $\Delta$ is (almost-)closed. We are going to show by induction on $j$ that with the \emph{same} labeling, every $H_j$ is in $\Delta$. For $j=1$, since $\Delta$ is pure, it contains a face $F=a_0 a_1 \cdots a_d$ with $a_0=1$, and then it is easy to derive (either directly, or using that the labeling satisfies the under-closed condition by Theorem \ref{thm:Hierarchy}) that $H_1$ is in $\Delta$.
Now suppose that $\Delta$ contains $H_j$ and let us show that $\Delta$ contains  $H_{j+1}$. By Lemma \ref{lem:closed}, $\Delta$ has a $d$-face $H'$ that contains $d+j+1$ and is adjacent to $H_j$. Such $H'$ has the same vertices of $H_j$, with the exception of a single vertex $i$ that was replaced by $d+j+1$. Now either $i=j$, in which case $H' = H_{j+1}$ and we are done; or $i > j$. If  $i> j$, then $j$ was not replaced, so it is still present in $H'$. Hence $H'$ and $H_j$ are adjacent faces with the same minimum, namely, $j$. By Remark \ref{rem:almost}, this implies that $H_{j+1}$ is in $\Delta$.  \qedhere
\end{compactdesc}
\end{proof}

\begin{remark} \label{rem:WCareSC} If the ``unit-interval'' assumption is weakened to ``under-closed'', then the direction ``$\Rightarrow$'' of Theorem \ref{thm:CTSC} no longer holds, with $K_{1,3}$ playing the usual role of the counterexample. The direction ``$\Leftarrow$'' instead is still valid. We claim in fact that \textit{all weakly-closed traceable complexes are strongly-connected}. To see this, it suffices to show that from any $d$-face $F$ of positive gap we can walk in the dual graph to some gap-$0$ face. But the weakly-closed definition tells us how to move in the dual graph from $F$ to a face $F'$ of smaller gap than $F$. So if we iterate this, eventually we get from $F$ to a gap-0 face. (The same type of argument is carried out in details in the proof of Theorem \ref{thm:Hi}, item (5),  below.) That said, the ``weakly-closed'' assumption is needed for ``$\Leftarrow$''. In fact, for any $d \ge 2$, if $G_d \eqdef \{1, d+2, 2d+3, \ldots, (k-1)d+k, kd+(k+1), \ldots, d^2+d+1\}$, then the traceable $d$-complex with $d^2+d+1$ vertices  $\Delta = H_1, H_2, \ldots, H_{d^2 }\,, H_{d^2+1 }\,, G_d\; $ is not strongly-connected. Its dual graph is a path of length $d^2+1$ plus an isolated vertex.
\end{remark}

Generalizing a result by Chen, Chang, and Chang \cite[Theorem 2]{ChChCh}, we can push Theorem \ref{thm:CTSC} a bit further. If $D$ is a simplicial complex obtained from $\Delta$ by deleting some vertices $v_1, \ldots, v_k$, then any labeling of $\Delta$ naturally induces a \emph{compressed labeling} for $D$, just by ordering the vertices of $D$ in the same way as they are ordered inside $\Delta$.  For example, if  $\Delta=123, 134, 345$, the compressed labeling for $D=\operatorname{del}(2, \Delta)$ is $123, 234$. A priori, this $D$ need not be pure. 

\begin{lemma} \label{lem:compressed}
Let $\Delta'$ be a $d$-dimensional simplicial complex obtained by deleting some vertices from a $d$-dimensional simplicial complex $\Delta$. 
If $\Delta$ is unit-interval (resp. under-closed, resp. semi-closed), then so is $\Delta'$. 
\end{lemma}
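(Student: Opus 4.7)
The proof is essentially a functoriality argument. Write $\Delta'$ as the deletion of some vertex set from $\Delta$, and let $V = \{v_1 < v_2 < \cdots < v_m\} \subseteq \{1,\ldots,n\}$ be the vertex set of $\Delta'$ in the original labeling. Let $\phi : \{1, \ldots, m\} \to \{1, \ldots, n\}$ be the order-preserving injection $\phi(i) = v_i$; this is exactly the bijection between the compressed labeling of $\Delta'$ and the restriction of the original labeling of $\Delta$ to $V$. The single fact we will use over and over is that $\phi$ is strictly increasing, so for any indices $i, j \in \{1,\ldots,m\}$ one has $i \le j$ if and only if $\phi(i) \le \phi(j)$. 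In particular, given a face $F = a_0 a_1 \cdots a_d$ of $\Delta'$ (in compressed labels), the set $\phi(F) = \phi(a_0) \phi(a_1) \cdots \phi(a_d)$ is a face of $\Delta$ (in the original labels) with $\min \phi(F) = \phi(a_0)$ and $\max \phi(F) = \phi(a_d)$.

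For the \emph{unit-interval} case, I would take any $(d+1)$-subset $W = \{w_0 < w_1 < \cdots < w_d\}$ of $\{a_0, a_0+1, \ldots, a_d\}$, which (by the unit-interval condition on $\Delta'$ that we want to prove) should be a face of $\Delta'$. Its image $\phi(W)$ is a $(d+1)$-subset of integers between $\phi(a_0)$ and $\phi(a_d)$, hence contained in the integer interval $\{\phi(a_0), \phi(a_0)+1, \ldots, \phi(a_d)\}$. Since $\phi(F)$ is a face of $\Delta$ and $\Delta$ is unit-interval, $\phi(W)$ is a face of $\Delta$. All its vertices lie in $V$ by construction, so $W$ is a face of $\Delta'$.

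For the \emph{under-closed} case, I would take a candidate face $a_0 i_1 \cdots i_d$ with $i_k \le a_k$. Applying $\phi$ and using that it is order-preserving, the image $\phi(a_0) \phi(i_1) \cdots \phi(i_d)$ satisfies $\phi(i_k) \le \phi(a_k)$, so by the under-closed property of $\Delta$ applied to the face $\phi(F)$, it is a face of $\Delta$; hence $a_0 i_1 \cdots i_d$ is a face of $\Delta'$. The \emph{semi-closed} case is completely analogous: the defining condition for $\phi(F)$ in $\Delta$ falls into case (i) or (ii); in either case, the same condition transfers to $F$ in $\Delta'$ via the order-preserving $\phi$, because an inequality $i_k \le a_k$ (resp.\ $i_k \ge a_k$) in compressed labels is equivalent to $\phi(i_k) \le \phi(a_k)$ (resp.\ $\phi(i_k) \ge \phi(a_k)$).

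\textbf{Main obstacle.} There is no real obstacle here; the content is entirely in choosing the right ambient object, namely the compressed labeling, so that the three defining conditions become literally ``componentwise'' conditions that are preserved by the order-preserving inclusion $\phi$. The one thing worth double-checking is that the semi-closed case does not require us to pick the \emph{same} alternative (i) or (ii) for all faces simultaneously: since the semi-closed definition allows the choice of alternative to depend on the face $F$, we are free to inherit from $\Delta$ whichever of (i), (ii) happens to hold for $\phi(F)$. Note that the weakly-closed property is (correctly) not claimed here: its definition involves a witness face $G$ with certain inequalities among maxima/minima, and a deletion can remove those witnesses without providing replacements, so the argument above does not apply to that case.
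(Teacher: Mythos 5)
Your proposal is correct and is exactly the paper's argument: the paper's own proof is the one-line observation that ``if the original labeling satisfied the unit-interval (resp.\ under-closed, resp.\ semi-closed) condition, so does the compressed labeling,'' and your write-up simply makes explicit the order-preserving injection $\phi$ that justifies this. The remarks about the per-face choice of alternative in the semi-closed case and about why weakly-closed is excluded are both accurate.
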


\begin{proof} 
If the original labeling satisfied the unit-interval (resp. under-closed, resp. semi-closed) condition, so does the compressed labeling.
\end{proof}

\begin{lemma} \label{lem:ChChCh} Let $\Delta$ be a $d$-dimensional strongly-connected simplicial complex, with a labeling that makes it unit-interval. The following are equivalent:
\begin{compactenum}[ \rm (a)]
\item The deletion of $d$ or less vertices, however chosen, yields a $d$-complex that is strongly connected. 
\item The deletion of $d$ or less vertices, however chosen, yields a pure $d$-complex  that  with the compressed labeling is traceable.  
\item $\Delta$ contains all faces of gap $\le d$.
\end{compactenum}
\end{lemma}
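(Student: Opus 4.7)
The three conditions are tied together by two earlier tools: Lemma \ref{lem:compressed}, which shows that the unit-interval property descends to any deletion when equipped with its compressed labeling, and Theorem \ref{thm:CTSC}, whose proof demonstrates that on a pure unit-interval complex the very labeling witnessing the unit-interval property makes the complex traceable if and only if the complex is strongly connected. My plan is the cycle $(c)\Rightarrow(a)\Rightarrow(b)\Rightarrow(c)$, with each implication turning out to be a short application of these two tools.

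For $(c)\Rightarrow(a)$, let $W$ be any set of at most $d$ vertices and $\Delta'$ the induced subcomplex on $V(\Delta)\setminus W$ equipped with its compressed labeling. Denote by $H'_j$ the $d$-face consisting of the $d+1$ consecutive surviving vertices starting at compressed label $j$. In the original labeling these $d+1$ vertices lie in an interval of length at most $d+|W|\le 2d$, so $H'_j$, viewed inside $\Sigma^d_n$, has gap at most $|W|\le d$, and by (c) it lies in $\Delta$, hence in $\Delta'$. Thus $\Delta'$ contains all of $H'_1,\ldots,H'_{n-|W|-d}$, which makes it traceable in the compressed labeling. Together with purity of $\Delta'$ (discussed below) and Lemma \ref{lem:compressed}, this allows the ``traceable $\Rightarrow$ strongly-connected'' direction of Theorem \ref{thm:CTSC} to conclude.

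For $(a)\Rightarrow(b)$, our convention makes strong connectivity imply purity, and Lemma \ref{lem:compressed} gives the unit-interval property in the compressed labeling; the ``$\Rightarrow$'' direction of Theorem \ref{thm:CTSC}, whose proof produces traceability using the very labeling that witnesses unit-interval, does the rest. For $(b)\Rightarrow(c)$, given any $d$-face $F=a_0a_1\cdots a_d$ of $\Sigma^d_n$ with $\operatorname{gap}(F)\le d$, choose $W=\{a_0+1,\ldots,a_d-1\}\setminus F$, a set of size $\operatorname{gap}(F)\le d$. After compression the vertices $a_0,a_1,\ldots,a_d$ acquire consecutive labels $a_0,a_0+1,\ldots,a_0+d$, so $F$ is exactly the face $H'_{a_0}$ of $\Delta'$; by (b) this face belongs to $\Delta'$, whence $F\in\Delta$.

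The only nontrivial technical point is verifying purity of $\Delta'$ in the step $(c)\Rightarrow(a)$. I plan to argue by a counting estimate: for a face $\sigma\in\Delta'$ with $\dim\sigma<d$, set $L=\max\sigma-\min\sigma$ (in the original labeling). If $L\le 2d$, then the interval $[\max\sigma-2d,\min\sigma+2d]$ has more than $|\sigma|+|W|$ elements, so it contains some $v\notin\sigma\cup W$; for such $v$ the face $\sigma\cup\{v\}$ has gap at most $d$ and so lies in $\Delta$ by (c), and avoiding $W$ places it in $\Delta'$. If instead $L>2d$, pick any $d$-face $F$ of $\Delta$ containing $\sigma$ (which exists by purity of $\Delta$); the interval $[\min F,\max F]$ has at least $L+1>2d+1$ elements, so after removing $|\sigma|\le d$ vertices of $\sigma$ and at most $|W|\le d$ deleted ones, at least one candidate $v$ remains, and the unit-interval property applied to $F$ guarantees $\sigma\cup\{v\}\in\Delta$, hence $\sigma\cup\{v\}\in\Delta'$. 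Either way $\sigma$ extends to a $d$-face of $\Delta'$, so $\Delta'$ is pure.
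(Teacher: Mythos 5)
Your proposal is correct and follows essentially the same route as the paper: (a) and (b) are linked through Lemma \ref{lem:compressed} and Theorem \ref{thm:CTSC}, (b)$\Rightarrow$(c) is obtained by deleting exactly the gap vertices of $F$, and purity of the deleted complex under (c) rests on the same two-case count (faces of small span handled via (c), faces of large span via the unit-interval property) that underlies the paper's pigeonhole claim that every $(d-1)$-face lies in at least $d+1$ facets. The only cosmetic differences are the orientation of the cycle of implications and that you extend faces of every dimension below $d$ one vertex at a time, where the paper treats $(d-1)$-faces explicitly and leaves lower dimensions to the reader.
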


\enlargethispage{4mm}

\begin{proof} 
\begin{compactdesc}
\item{(a) $\Leftrightarrow$ (b):} By Lemma \ref{lem:compressed} the compressed labeling satisfies the unit-interval condition. Via Theorem \ref{thm:CTSC}, we conclude.
\item{(b) $\Rightarrow$ (c):} By deleting zero vertices we notice that $\Delta$ is itself traceable. Let $F=a_0 \cdots a_d $ be any $d$-face of $\Sigma^d_n$ that has gap $\le d$. If $\operatorname{gap}(F)=0$, then $F$ is one of $H_1, \ldots, H_{n-d}$, so  $F$ is in $\Delta$ by definition of traceable. Otherwise,  set $S_F \eqdef \{ j \notin F \textrm{ such that } a_0 < j < a_d\}$. Let $\Delta'$ be the complex obtained from $\Delta$ by deleting the vertices in $S_F$, which are at most $d$. By assumption, $\Delta'$ is traceable with the ``compressed labeling''. So $\Delta'$ contains a gap-0 face of minimum $a_0$. But by how the compressed labeling is defined, this face has exactly the vertices that  in the original labeling for $\Delta$ were called $a_0, a_1, \ldots, a_d$. So $F$ is in $\Delta$. 
\item{(c) $\Rightarrow$ (b):} Let $\Delta'$ be the $d$-complex resulting from the deletion. 
With the compressed labeling, $\Delta'$ is traceable, because any gap-$0$ $d$-face of $\Delta'$ with the compressed labeling, is a $d$-face of $\Delta$ that had gap $\le d$ in the original labeling. It remains to see that $\Delta'$ is pure. We prove that $\Delta'$ has no facets of dimension $d-1$, leaving the case of facets of even lower dimensions to the reader. We claim that every $(d-1)$-face $\sigma$ of $\Delta$ lies in at least $d+1$ distinct $d$-faces of $\Delta$. From the claim the conclusion follows via the pigeonhole principle: If we delete $d$ vertices, however chosen, then at least one of the $d$-faces containing $\sigma$ will survive the deletion, which implies that $\sigma$ is not a facet in $\Delta'$. \\
So let us prove the claim. Let $\sigma= b_0 \cdots b_{d-1}$. If $b_{d-1} - b_0 -d +1 \eqdef \operatorname{gap} (\sigma) \le  d$, then  $b_{d-1} + 1 \le b_0 + 2d$. So for each $i$ in the $(d+1)$-element set 
\[ T_{\sigma} \eqdef \{ b_0, b_0 + 1, \ldots, b_{d-1}, b_{d-1} + 1, \ldots, b_0 + 2d \} \:  \setminus \: \{ b_0, b_1, \ldots, b_{d-1}\}\]  
the $d$-face $\sigma \cup \{i\}$  has gap $\le d$, and thus is in $\Delta$ by assumption. If instead  $\operatorname{gap} (\sigma) \ge  d+1$, we use the unit-interval assumption: 
for every $i$ in $S_{\sigma} \eqdef \{ i \notin \sigma \textrm{ such that } b_0 < i < b_{d-1}\}$, the $d$-face $\sigma \cup \{i\}$ is in $\Delta$. So either way the claim is proven. \qedhere
\end{compactdesc}
\end{proof}

\begin{theorem}[Higher-dimensional Chen--Chang--Chang] \label{thm:Hi0}
Let $\Delta$ be a pure $d$-dimensional simplicial complex.
\begin{compactitem}
\item If $\Delta$ is unit-interval and the deletion of $\le d$ vertices, however chosen, yields a strongly-connected $d$-complex, then $\Delta$ is Hamiltonian.
\item If $\Delta$ is weakly-closed and Hamiltonian, the deletion of $\le 1$ vertices, however chosen, yields a strongly-connected $d$-complex.
\end{compactitem}
\end{theorem}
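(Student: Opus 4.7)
For the first direction, my plan is to combine Lemma \ref{lem:ChChCh} with an explicit cyclic-permutation construction. The lemma tells us that, in the unit-interval labeling, the hypothesis forces $\Delta$ to contain every $d$-face of gap at most $d$; equivalently, $\Delta$ contains the full $d$-skeleton of each window of $2d+1$ consecutive labels. It then suffices to exhibit a cyclic permutation $v_1, \ldots, v_n$ of $\{1, \ldots, n\}$ such that each cyclic window $\{v_i, v_{i+1}, \ldots, v_{i+d}\}$ has label-spread at most $2d$: every such window is then automatically a face of $\Delta$, and the corresponding relabeling witnesses Hamiltonicity. I would construct such a permutation by a \emph{middle-out zigzag}: place the median label $\lceil n/2 \rceil$ at the first position, then alternately insert the next smaller and the next larger unplaced labels, so that every window of $d+1$ consecutive positions spans at most $2d$ in label-value. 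The cyclic wrap-around then closes naturally near the middle of the label range, where the two arms of the zigzag meet.

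For the second direction, I would fix a labeling $\ell_W$ making $\Delta$ weakly-closed and a (possibly different) labeling $\ell_H$ witnessing Hamiltonicity. The preparatory observation is that weakly-closedness descends to $\Delta - v$ under the compressed $\ell_W$: given any $F \in \Delta - v$ and any integer $g'$ strictly between $\min F'$ and $\max F'$ in the compressed labeling, the corresponding $g$ in $\ell_W$ satisfies $g \ne v$, and the face $G$ supplied by $\Delta$'s weakly-closed condition is obtained by replacing either $\min F$ or $\max F$ with $g$, so $v \notin G$, i.e.\ $G \in \Delta - v$. (The excluded choice $g = v$ would compress $F$ to a gap-zero face, for which no witness is needed.) Meanwhile, the Hamiltonian labeling $\ell_H$ furnishes in $\Delta - v$ a spanning dual path $\mathcal{P}$ made of those $H_i$'s (in $\ell_H$) that avoid $v$, covering every remaining vertex. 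I would then walk down every facet of $\Delta - v$, via the preserved weakly-closed condition, to a low-gap face in $\ell_W$, and then a short additional argument—walking down also from the facets of $\mathcal{P}$ and exploiting that $\mathcal{P}$ covers every remaining vertex—would show that every facet is dual-connected to $\mathcal{P}$ inside $\Delta - v$. Hence $\Delta - v$ is strongly-connected, and the case of deleting zero vertices is the same argument without the deletion step.

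The main obstacle in the first direction is verifying that the middle-out zigzag construction meets the label-spread constraint uniformly at every cyclic window—especially near the start of the zigzag and across the wrap-around—for all parameter ranges $(n,d)$: small adjustments to the step pattern may be required depending on the parities of $n$ and $d$, and these must be handled case-by-case. In the second direction, the main subtlety is guaranteeing that the walk-down destinations of distinct facets all land in the same strongly-connected component as the Hamiltonian spanning path $\mathcal{P}$—a delicate point because the two witnessing labelings $\ell_W$ and $\ell_H$ need not agree, so the gap-zero faces reached in $\ell_W$ need not be the facets of $\mathcal{P}$ itself and must be separately connected to it.
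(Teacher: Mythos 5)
Your reduction of the first bullet is the paper's: invoke Lemma \ref{lem:ChChCh} to get all faces of gap $\le d$, then exhibit a cyclic ordering of the labels in which every window of $d+1$ consecutive entries has spread $\le 2d$. But the middle-out zigzag does not have this property, and the failure is not at the start or the wrap-around but everywhere past the first few positions: if the sequence is $m,\,m-1,\,m+1,\,m-2,\,m+2,\dots$, then position $2k$ carries label $m-k$ and position $2k+1$ carries label $m+k$, so the window starting at position $2k$ already contains both $m-k$ and $m+k$ and has spread at least $2k$, which exceeds $2d$ as soon as $k>d$. No reordering of the insertion pattern fixes this, because any window containing one "low arm" entry and one "high arm" entry deep into the sequence is doomed. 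The permutation that works is a \emph{there-and-back sweep}, not a middle-out one: list the odd labels increasingly and then the even labels decreasingly. Every window then lies on the ascending run, the descending run, the top turn, or the bottom wrap, and in each case its spread is at most $2d$; this is exactly the sequence $O_i, G, E_j, F$ used in the paper's proof, where one also checks that the few windows straddling the two turns still have gap $\le d$ in the original labeling.

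For the second bullet, your observation that the weakly-closed condition descends to $\Delta - v$ under the compressed labeling is correct (and is not covered by Lemma \ref{lem:compressed}, which omits the weakly-closed case), but the argument does not close. Walking each facet of $\Delta - v$ down to a gap-zero face of the compressed $\ell_W$-labeling only partitions the facets among possibly several gap-zero faces, and nothing you have said connects those destinations to each other or to the spanning path $\mathcal{P}$ coming from $\ell_H$; you have flagged this yourself, and the "short additional argument" is precisely the missing content. The paper sidesteps the two-labeling problem entirely by working in a single labeling: cyclically reshuffle so that the deleted vertex is $n$, and note that the walk-down of Remark \ref{rem:WCareSC} applied to a facet $F$ avoiding $n$ never introduces $n$ (each step replaces $\min F$ or $\max F$ by a vertex strictly inside the interval of $F$, so the maximum never increases past $n-1$), hence terminates at one of $H_1,\dots,H_{n-d-1}$; these are all present by Hamiltonicity and form a connected subpath of the dual graph avoiding $n$. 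If you insist on allowing $\ell_W \ne \ell_H$, you need a genuinely new idea to merge the components, and I do not see one in the proposal.
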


\begin{proof} For the second claim: Up to a cyclic reshuffling, the vertex we wish to delete is $n$. The argument of Remark \ref{rem:WCareSC} yields a dual path in $\Delta$ from each $d$-face $F$ to $H_1$. If $F$ does not contain $n$, none of the $d$-faces in such dual path does, so the path belongs to the dual graph of the deletion of $n$ from $\Delta$.

Now we prove the first claim. By Lemma \ref{lem:ChChCh}, $\Delta$ contains all $d$-faces of gap $\le d$. In particular: 
\begin{compactitem}
\item for any odd $i$ such that $1 \le i \le n-2d$, $\Delta$ contains the gap-$d$ face $O_i$ formed by $i$ and by the first $d$ consecutive odd integers after~$i$; 
\item for any even $j$ such that $2 \le j \le n-2d$, $\Delta$ contains the gap-$d$ face $E_j$ formed by $j$ and by the first $d$ consecutive even integers after $j$; 
\item $\Delta$ contains  the gap-$(d-1)$ face $F=1,2,4,  \ldots, 2d$
formed by $1$ and by the $d$ smallest even natural numbers;
\item $\Delta$ contains  the gap-$(d-1)$ face $G$ formed by the largest even integer $\le n$ and by the $d$ largest odd integers $\le n$. 
\end{compactitem}
Now consider the following sequence $\mathfrak{C}$ of $d$-faces in $\Delta$: First all $O_i$'s in increasing order, then $G$, then all $E_j$'s in decreasing order, then $F$. Note that any two $O_i$'s are adjacent, and  the last of them is adjacent to $G$; symmetrically, any two $E_j$'s are adjacent, and $F$ is adjacent to $E_2$. We claim that this sequence would form a weakly-Hamiltonian cycle if we relabeled the vertices of $\Delta$ first by listing the odd ones increasingly, and then the even ones decreasingly. 

Formally, if $n$ is odd, we introduce the new labeling
\[ \ell_1 \eqdef 1, \, \ell_2 \eqdef 3, \,  \ell_3  \eqdef  5, \ldots, \, \ell_{\frac{n+1}{2}} \eqdef n, \,   \ell_{\frac{n+1}{2} + 1} \eqdef  n-1,  \,  \ell_{\frac{n+1}{2} + 2}\eqdef n-3, \ldots,  \,   \ell_{n-1} \eqdef 4,  \,  \ell_{n} \eqdef 2.\]
And if instead $n$ is even,  we introduce the new labeling
 \[ \ell_1 \eqdef  1, \,  \ell_2 \eqdef 3, \,  \ell_3  \eqdef  5, \ldots, \, \ell_{\frac{n}{2}} \eqdef n-1,  \, \,  \ell_{\frac{n}{2} + 1} \eqdef n,  \,\,  \ell_{\frac{n+1}{2} + 2} \eqdef  n-2, \ldots,  \,   \ell_{n-1} \eqdef 4,  \,  \ell_{n} \eqdef 2.\]
Let us set $L_1 \eqdef \ell_1 \ell_2 \cdots \ell_{d+1}$,  $L_2 \eqdef \ell_2 \ell_3 \cdots \ell_{d+2}$, and so on.
Then the sequence $\mathfrak{C}$ described above is equal (whether $n$ is even or odd) to
 \[ L_1, \, L_2, \, \ldots, \, L_{\lfloor \frac{n+1}{2} \rfloor- (d-1)}, \, L_{\lfloor \frac{n+1}{2} \rfloor + 1}, \, L_{\lfloor \frac{n+1}{2}\rfloor + 2}, \ldots, L_{n - d}, L_{n - (d-1)}.\]

This shows that with the new labeling $\Delta$ is \emph{weakly-}Hamiltonian. It remains to show for $d \ge 2$ that our weakly-Hamiltonian cycle can indeed be `completed' to a Hamiltonian cycle, in the sense that the $L_i$'s that were not mentioned in $\mathfrak{C}$ are anyway contained in $\Delta$. First of all, note that $\Delta$ with the original labeling contained all the  $d$-faces of gap $\le d$, so  in particular it contained all $d$-faces containing $1$ and with vertex set contained in $F \cup O_1$. This shows that with the new labeling, $L_{n-(d-2)}$, $\ldots$, $L_{n}$ are all in $\Delta$. So it remains to consider the missing $L_i$'s from the `center' of the sequence  $\mathfrak{C}$. For the ``$n$ odd'' case (the  case for $n$ even is analogous), we have to see whether $\Delta$ contains also the $d-1$ facets
\[  L_{\frac{n+1}{2} - d + 2},  \: L_{\frac{n+1}{2} - d+ 3}, \:  \ldots, \: L_{\frac{n+1}{2}}.\]
When we translate these $d$-faces back into the \emph{old} labeling, it is easy to see that the face with the largest gap is the last one, which has gap $d-1$. So all these faces are in $\Delta$ by assumption. 
\end{proof}

\begin{example} 
Let $\Delta$ be an unit-interval $3$-complex on $n=9$ vertices that contains all tetrahedra with gap $\le 3$. With the notation of Theorem \ref{thm:Hi0} the complex  $\Delta$ contains the sequence $\mathfrak{C}$ below:
\[O_1=1357, \: O_2=3579, \:G = 5789, \:E_2=2468,  \:F=1246.\] If we relabel the vertices as in the proof of Theorem \ref{thm:Hi0}, the list above becomes
\[L_1, \:L_2, \:L_3, \:L_6, \:L_7.\] 
Thus $\Delta$ is weakly-Hamiltonian. To prove that it is Hamiltonian, we need to check that $L_4, L_5$ and $L_8, L_9$ are in $\Delta$. Translated into the original labeling, this means checking that $6789, 4689$ and $1234, 1235$ are in $\Delta$, which is clearly the case because they all have gap $\le 2$. 
\end{example}

\begin{remark}
For $d=1$, Theorem~\ref{thm:Hi0} boils down to Chen--Chang--Chang's result that ``unit interval graphs are Hamiltonian if and only if they are $2$-connected'' \cite[Theorem 2]{ChChCh}. The $G_5$ of Remark \ref{rem:G5} is $2$-connected and not Hamiltonian; hence the ``unit-interval'' assumption in the first claim of Theorem~\ref{thm:Hi0} is necessary. As for the second claim, the ``weakly-closed'' assumption is necessary for $d>1$, because we saw in Remark  \ref{rem:HnotSC} that some Hamiltonian $d$-complexes are not strongly-connected. 
\end{remark}

We may condense most of the results of this chapter in the following summary:

\begin{theorem} \label{thm:Hi}
Let $\Delta$ be a $d$-dimensional simplicial complex. 
\begin{compactenum}[\rm (1) ]
\item  If $\Delta$ is closed (or unit-interval) and strongly connected, then $\Delta$ is traceable. 
\item If $\Delta$ is closed (or unit-interval), and the deletion of $d$ or less vertices, however chosen, yields a strongly connected complex, then  $\Delta$ is Hamiltonian.
\item If $\Delta$ is under-closed, it contains $H_1$. If in addition $\Delta$ has a face of minimum $i$ for each $i\in \{2, \ldots, n-d\}$, then $\Delta$ is traceable. 
\item If $\Delta$ is semi-closed, then for every face $F=a_0 \cdots a_d$ of $\Delta$  either $H_{a_0}$ or $H_{a_d-d}$ is in $\Delta$. 
\item If $\Delta$ is weakly-closed, then $\Delta$ contains at least one of the  $H_i$'s. \\If in addition $\Delta$ contains $H_1$, plus a face with minimum $i$ and of  gap smaller than $d$  for each $i$ in  $\{2, \ldots, n-d\}$, then $\Delta$ is weakly-traceable. 
\end{compactenum}
\end{theorem}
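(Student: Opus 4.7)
We handle the five parts in turn. Part (1) is exactly Theorem \ref{thm:CTSC}, and Part (2) is Theorem \ref{thm:Hi0} (the ``closed'' case reduces to the unit-interval one via Proposition \ref{prop:closed2}, and Lemma \ref{lem:compressed} ensures the unit-interval property is preserved under deletion of vertices).

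For (3), pick any facet $F = a_0 a_1 \cdots a_d$ of $\Delta$ that contains the vertex $1$; such a facet exists by purity, and forces $a_0 = 1$. Since $a_k \ge a_0 + k$ for every $k$, the under-closed condition applied to $F$ with $i_k := 1 + k$ yields $H_1 \in \Delta$. The same argument applied to any facet whose minimum equals $i$ produces $H_i \in \Delta$, so the stated existence hypothesis immediately yields traceability. For (4), the inequalities $a_0 + k \le a_k$ and $a_k \le a_d - (d-k)$, both immediate from $a_0 < a_1 < \cdots < a_d$, show that condition (i) of semi-closed forces $H_{a_0} \in \Delta$, while condition (ii) forces $H_{a_d - d} \in \Delta$.

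The first assertion of (5) will follow from the following \emph{gap-reduction lemma}: if $F = a_0 \cdots a_d \in \Delta$ has $\operatorname{gap}(F) > 0$, then $\Delta$ contains a face $G$ adjacent to $F$ with $\operatorname{gap}(G) < \operatorname{gap}(F)$. To prove it, pick any integer $g \notin F$ with $a_0 < g < a_d$ (which exists because the gap is positive) and invoke the weakly-closed condition to get $G \ni g$, adjacent to $F$, with $\min G \ne a_0$ or $\max G \ne a_d$. Since $|F \cap G| = d$ and $g \in G \setminus F$, we have $G = (F \setminus \{v\}) \cup \{g\}$ for some $v \in F$. If $v$ were an interior vertex of $F$, both $a_0$ and $a_d$ would survive in $G$, contradicting the weakly-closed condition; so $v \in \{a_0, a_d\}$. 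In either case the interval $[\min G, \max G]$ is strictly contained in $[a_0, a_d]$, which gives the strict gap drop. Iterating from any facet of $\Delta$ terminates at some $H_j \in \Delta$.

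For the traceability statement of (5), iteratively apply the lemma starting from each given face $F_i$ of minimum $i$ and gap $g_i < d$. At every step only one endpoint of the interval $[\min, \max]$ changes (either $\min$ increases or $\max$ decreases), so the terminal gap-$0$ face has the form $H_{j_i}$ with $j_i \in [i,\, i + g_i] \subseteq [i,\, i+d-1]$; for $i = n-d$ the bound $\max F_{n-d} \le n$ forces $g_{n-d} = 0$ and hence $H_{n-d} \in \Delta$. Set $S := \{1\} \cup \{j_i : 2 \le i \le n-d\}$. For each $m \in \{1, \ldots, n-d\}$, $S \cap [m,\, m+d-1] \ne \emptyset$ (use $1$ if $m=1$, and $j_m$ otherwise), so consecutive elements of sorted $S$ differ by at most $d$. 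Listing the corresponding $H_j$'s in increasing order of $j$ then gives a sequence of pairwise-incident gap-$0$ faces in $\Delta$ that starts at $H_1$, ends at $H_{n-d}$, and covers every vertex from $1$ to $n$: a weakly-Hamiltonian path. The most delicate step throughout is the case analysis inside the gap-reduction lemma.
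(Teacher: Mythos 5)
Your proof is correct and follows essentially the same route as the paper: parts (1)--(2) cite the same prior results, (3)--(4) are the same direct verifications from the definitions, and (5) rests on the same gap-reduction iteration (the observation that the weakly-closed condition forces the replaced vertex to be an endpoint, so exactly one of $\min$/$\max$ moves inward). The only difference is cosmetic bookkeeping at the end of (5), where you collect all terminal indices $j_i$ and sort them instead of chaining them greedily as the paper does; both yield the same weakly-Hamiltonian path.
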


\begin{proof} 
\begin{compactenum}[\rm (1) ]
\item This is given by Proposition \ref{prop:closed2} and Theorem \ref{thm:CTSC} above. 
\item This is given by Proposition \ref{prop:closed2} and Theorem \ref{thm:Hi0} above. 
\item By definition of under-closed, if $\Delta$ has a face of minimum $i$, then $\Delta$ contains $H_i$. The fact that $\Delta$ has  a face of minimum $1$ follows from the assumption that $\Delta$ is pure. 
\item This is straightforward from the definition of semi-closed. 
\item  Let $F=a_0 a_1 \cdots a_d$ be any facet of $\Delta$ with $\operatorname{gap}(F)>0$. Let  $g \notin F$ such that $a_0 < g < a_d$. By definition of ``weakly-closed'', some face $G=b_0 b_1 \cdots b_d$ of $\Delta$ contains $g$, is adjacent to $F$, and has either $b_0 \ne a_0$ or $b_d \ne a_d$. Thus $\operatorname{gap} G < \operatorname{gap} F$. Iterating the process, eventually we find in $\Delta$ a gap-$0$ face, which has to be one of  
\[H_{a_0},  \; H_{a_0+1}, \ldots, H_{a_d - d}. \] 
As for the second claim: By assumption, $\Delta$ contains $H_1$. Also, $\Delta$ contains $H_{n-d}$, because no other face has minimum $n-d$. Now let  $H'=2a_1\cdots a_d$ be a face of $\Delta$ with minimum $2$ and gap $\le d-1$. By the argument above, we know that $\Delta$ must contain at least one of 
\[H_2, H_3, \ldots, H_{a_d-d}.\]
Let us call this face $H_{i_2}$.  By how $H'$ was chosen, 
\[ 2 \le i_2 \le a_d - d = \operatorname{gap}(H')+2 \le d+1.\]
But since $H_1$ contains all vertices from $1$ to $d+1$, in particular it contains $i_2$. So $H_{i_2}$ is incident with $H_1$. Now let $H''=a_0a_1\cdots a_d$ be a face of $\Delta$ with gap smaller than $d$, and minimum $a_0=i_2+1$. Repeating the argument above, $\Delta$ contains one of 
\[H_{i_2+1}, H_{i_2+2}, \ldots, H_{a_d-d}.\]
Call this facet $H_{i_3}$; as above, it must intersect $H_{i_2}$. And so on. Eventually, we obtain a list $H_1 = H_{i_1}, H_{i_2} \ldots, H_{i_{k-1}}, H_{i_k} = H_{n-d}$ of facets of $\Delta$ that makes it weakly-traceable.
 \qedhere
\end{compactenum}
\end{proof}

\begin{remark}
In the previous theorem, a relabeling was necessary only to prove item (2). For  all other items, the \emph{original} labeling was already suitable for the desired conclusion. So for item (1) we proved a slightly stronger statement: ``If $\Delta$ is strongly-connected, then any labeling that makes $\Delta$ unit-interval automatically makes $\Delta$ traceable''. Same for items (3), (4), (5).
\end{remark}

\newpage

\section{Algebraic motivation}
In this section, we review Ene et al's definition of determinantal facet ideals \cite{EneEtAl}. We find out a large class of them that are radical. In fact, we prove the following:
\begin{compactitem}
\item If a simplicial complex is semi-closed, then its determinantal facet ideal has a square-free Gr\"obner degeneration (and in particular is radical), and the quotient by such ideal in positive characteristic is $F$-pure (Theorem \ref{t:s-c-f}).
\item If the simplicial complex is unit-interval, then the natural generators of its determinantal facet ideal form a Gr\"obner basis with respect to a diagonal term order  (Theorem \ref{t:a-c-gb}). Moreover, the converse is true if with respect to the same labeling, the simplicial complex is traceable (Theorem \ref{t:a-c-gb1}). 
\end{compactitem}

\subsection{A foreword on $F$-pure rings, $F$-split rings, and Knutson ideals}

Let $p$ be a prime number. Let $R$ be a ring of characteristic $p$. Recall that the \emph{Frobenius map} is the  ring homomorphism from $R$ to itself that maps an element $r \in R$ to $r^p$.
We denote by $F_*R$ the $R$-module defined as follows:  $F_*R \eqdef R$ as additive group, and
$r\cdot x \eqdef r^px$ for all $r\in R$ and $x\in F_*R$.  This allows us to view the Frobenius map as a map of $R$-modules,
\begin{eqnarray*}
F: & R\longrightarrow F_*R \\
& r\mapsto r^p.
\end{eqnarray*}

The ring $R$ is reduced if and only if $F$ is injective. So the following definitions are natural:

\begin{definition}
$R$ is \emph{$F$-pure} if $F\otimes 1_M:M\rightarrow F_*R\otimes_R M$ is injective for any $R$-module $M$. 
\end{definition}

\begin{definition}
$R$ is \emph{$F$-split} if there exists a homomorphism $\theta:F_*R\rightarrow R$ of $R$-modules such that $\theta\circ F=1_R$. Such a $\theta$ is called an $F$-splitting of $R$. 
\end{definition}

If a ring is $F$-split, it is clearly $F$-pure. The converse does not hold in general. However, the two concepts are equivalent in a number of cases, for example:

\enlargethispage{4mm}
\begin{lemma}\label{l:split=pure}
Let $R=\bigoplus_{i\in\ZZ}R_i$ be a Noetherian graded ring of characteristic $p$ having a unique homogeneous ideal $\mm$ that is maximal with respect to inclusion. Furthermore, assume that the Noetherian local ring $R_0$ is complete. Then the following are equivalent:
\begin{compactenum}[ \rm (a)]
\item $R$ is $F$-split.
\item $R$ is $F$-pure.
\item $F\otimes 1_E:  \: E \, \longrightarrow \, F_*R\otimes_RE$ is injective, where $E$ is the injective hull of $R/\mm$.
\end{compactenum}
\end{lemma}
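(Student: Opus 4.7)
The plan is to prove the equivalences cyclically, (a) $\Rightarrow$ (b) $\Rightarrow$ (c) $\Rightarrow$ (a), where the first two implications are formal and the last is the substantive one, driven by Matlis duality together with the completeness assumption on $R_0$.

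First, (a) $\Rightarrow$ (b): if $\theta\colon F_*R\to R$ splits $F$, i.e.\ $\theta\circ F=1_R$, then for any $R$-module $M$ one has $(\theta\otimes 1_M)\circ(F\otimes 1_M)=1_M$, so $F\otimes 1_M$ is split injective and in particular injective. Next, (b) $\Rightarrow$ (c) is immediate, by specializing $M=E$.

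For the nontrivial implication (c) $\Rightarrow$ (a), I would apply the Matlis-dual functor $(-)^\vee\eqdef\Hom_R(-,E)$ to the injection $F\otimes 1_E\colon E\hookrightarrow F_*R\otimes_R E$. Since $E$ is injective, $(-)^\vee$ turns this injection into a surjection
\[
\Hom_R(F_*R\otimes_R E,\,E)\;\twoheadrightarrow\;\Hom_R(E,E).
\]
Hom–tensor adjunction rewrites the left-hand side as $\Hom_R\bigl(F_*R,\Hom_R(E,E)\bigr)$. The graded Noetherian hypotheses together with completeness of $R_0$ are precisely what is needed to make (graded) Matlis duality work: $E$ is the graded injective hull of $R/\mm$, and the canonical homothety map $R\to\Hom_R(E,E)$, $r\mapsto(e\mapsto re)$, is an isomorphism. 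Substituting, the displayed surjection becomes
\[
\Hom_R(F_*R,\,R)\;\twoheadrightarrow\;R.
\]
Pick any $\theta\in\Hom_R(F_*R,R)$ that maps to $1\in R$; unwinding the adjunction and the identification $\Hom_R(E,E)=R$ shows that the image of $\theta$ is precisely the element of $R$ corresponding to $\theta\circ F\in\Hom_R(R,R)=R$. Hence $\theta\circ F=1_R$, and $\theta$ is an $F$-splitting.

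The main obstacle is verifying the Matlis duality isomorphism $R\cong\Hom_R(E,E)$ in this precise graded setting, because $R_0$ is complete but $R$ itself need not be $\mm$-adically complete; in the plain local case the identification is $\widehat R\cong\Hom_R(E,E)$, and one has to invoke the graded theory (or equivalently work with the $^*\Hom$ and the graded injective hull) to see that no further completion is needed. Once that is in place, the adjunction bookkeeping required to check that the chosen lift of $1_R$ is really an $F$-splitting, and not merely some weaker map, is routine.
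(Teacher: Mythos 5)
Your proposal is correct and follows essentially the same route as the paper's proof: the first two implications are dispatched formally, and (c) $\Rightarrow$ (a) is obtained by dualizing into $E$, applying Hom--tensor adjunction, and invoking graded Matlis duality (the identification $\Hom_R(E,E)\cong R$, which the paper justifies by citing Bruns--Herzog) to produce a $\theta\in\Hom_R(F_*R,R)$ lifting $1$, which is then checked to satisfy $\theta\circ F=1_R$ since $\alpha(\theta)=\theta(F(1))$. The point you flag as the main obstacle is exactly the one the paper handles by reference to the graded duality results in [BH93].
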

\begin{proof}
$(a)\implies (b)\implies (c)$ are obvious implications. To see $(c)\implies (a)$: the map 
\[F\otimes 1_E: \: E \,\longrightarrow \: F_*R\otimes_RE\] is injective if and only if the corresponding map 
\[\Hom_R(F_*R,\Hom_R(E,E))\cong \Hom_R(F_*R\otimes_RE,E)\, \longrightarrow \, \Hom_R(E,E)\]
is surjective. Hence, by \cite[Corollary 3.6.7, Proposition 3.6.16, Theorem 3.6.17]{BH93}, the corresponding map $\alpha:\Hom_R(F_*R,R)\rightarrow R$ is surjective. So there exists $\theta\in \Hom_R(F_*R,R)$ such that $\alpha(\theta)=1$. On the other hand, by construction $\alpha(\theta)=\theta(F(1))$, so $\theta \circ F=1_R$. 
\end{proof}

Since we want to study homogeneous quotients of a polynomial ring over a field, by Lemma~\ref{l:split=pure}  we may as well regard  the $F$-split notion and the $F$-pure notion as equivalent. 

\medskip
In the following the concept of {\it Knutson ideal} will be fundamental. The name arises from the work of Knutson \cite{Knutson}, later systematically investigated by the second author \cite{Se1}, who extended several properties from $\ZZ/p\ZZ$ to any field. The result from \cite{Se1} that we shall need is the following:

\begin{theorem}[{Seccia \cite{Se1}}]\label{t:lisa}
Let $K$ be a field. Let $g\in S=K[x_1,\ldots ,x_n]$ be a polynomial with $\init_<(g)$ square-free for some term order on $S$. Let $\C_g$ be the smallest set of ideals of $S$ containing $(g)$ and such that: 
\begin{compactenum}
\item $I\in \C_g \implies I:h\in \C_g$ whenever $h\in S$,
\item $I,J\in \C_g \implies I+J\in \C_g, \ I\cap J\in \C_g$.
\end{compactenum}
If $I\in \C_g$, then $\init_<(I)$, and therefore $I$, is radical. Furthermore, if $I,J\in \C_g$, then $\init_<(I+J)=\init_<(I)+\init_<(J)$ and $\init_<(I\cap J)=\init_<(I)\cap\init_<(J)$. Finally, if $K$ has positive characteristic, $S/I$ is $F$-pure whenever $I\in\C_g$.
\end{theorem}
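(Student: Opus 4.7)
The plan is to follow Knutson's Frobenius-splitting approach: first handle the theorem when $\mathrm{char}\,K=p>0$, then transfer the characteristic-free conclusions to arbitrary $K$ by reduction mod $p$.

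First, in positive characteristic, I would build an explicit Frobenius splitting compatible with $(g)$. By Fedder's criterion, $S/(g)$ is $F$-pure exactly when $g^{p-1}\notin\mathfrak{m}^{[p]}$, where $\mathfrak{m}=(x_1,\ldots,x_n)$. Since $\init_<(g)$ is square-free, $\init_<(g^{p-1})=\init_<(g)^{p-1}$ is a sum of monomials each of whose exponents is at most $p-1$, so $g^{p-1}\notin\mathfrak{m}^{[p]}$, and the corresponding Fedder element gives a splitting $\varphi\in\Hom_S(F_*S,S)$ with $\varphi(F_*(g))\subseteq(g)$. I would then verify that the class $\mathcal{D}$ of ideals $I$ with $\varphi(F_*I)\subseteq I$ is closed under $+$, $\cap$, and colons: closures under $+$ and $\cap$ are formal, and for colons one uses the projection formula $h\,\varphi(F_*x)=\varphi(F_*(h^p x))$ to reduce the condition $\varphi(F_*(I:h))\subseteq I:h$ to the definition of $I:h$. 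Since $(g)\in\mathcal{D}$, minimality forces $\mathcal{C}_g\subseteq\mathcal{D}$; consequently every $I\in\mathcal{C}_g$ is compatibly split, which gives simultaneously $F$-purity of $S/I$ and radicality of $I$.

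Next I would run a parallel construction starting from the square-free monomial polynomial $\init_<(g)$, obtaining a class $\mathcal{C}_g^{\init}$ of monomial ideals, all of them square-free (square-freeness and monomiality both survive $+$, $\cap$, and colons of monomial ideals). By simultaneous induction on the build-up of $\mathcal{C}_g$, I would establish the three statements $\init_<(I)\in\mathcal{C}_g^{\init}$, $\init_<(I+J)=\init_<(I)+\init_<(J)$, and $\init_<(I\cap J)=\init_<(I)\cap\init_<(J)$. The inclusions $\init_<(I+J)\supseteq\init_<(I)+\init_<(J)$ and $\init_<(I\cap J)\subseteq\init_<(I)\cap\init_<(J)$ are automatic; the reverse inclusions are extracted from Hilbert-function comparison across the short exact sequence $0\to I\cap J\to I\oplus J\to I+J\to 0$, which degenerates flatly to its initial counterpart. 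The resulting square-freeness of $\init_<(I)$ already gives radicality of $I$ in arbitrary characteristic, which will be needed for the characteristic-zero case.

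The main obstacle will be the colon step, for two reasons: verifying $\varphi$-compatibility with colons in the positive-characteristic argument, and propagating the inductive hypothesis $\init_<(I:h)\in\mathcal{C}_g^{\init}$ in the initial-ideal argument. The colon operation interacts badly both with arbitrary $R$-linear maps and with the taking of initial ideals, so the proof really exploits the fact that $\varphi$ arises from a square-free Fedder element (respectively, that $\init_<(g)$ is a square-free monomial polynomial). Finally, to deduce radicality and the initial-ideal identities over a field $K$ of characteristic zero, I would spread a finite generating set of $I$ and of $g$ over a finitely generated $\mathbb{Z}$-subalgebra $A\subseteq K$, reduce modulo a maximal ideal of $A$ of large residue characteristic, apply the positive-characteristic result to the fibre, and lift radicality and the initial-ideal equalities back to $K$ by faithfully flat descent; the $F$-purity conclusion, of course, remains intrinsic to positive characteristic.
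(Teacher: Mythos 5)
A preliminary remark: the paper does not prove Theorem \ref{t:lisa} at all — it is quoted from Seccia \cite{Se1} (building on Knutson \cite{Knutson}) and used as a black box — so there is no internal proof to compare against; I can only assess your argument on its own terms. Your positive-characteristic skeleton is the standard and correct one: $\init_<(g)^{p-1}$ is the leading monomial of $g^{p-1}$ and has all exponents $\le p-1$, so $g^{p-1}\notin\mm^{[p]}$ and the Fedder-type map $\varphi=\Phi(F_*(g^{p-1}\cdot -))$ yields a splitting compatible with $(g)$; the class of $\varphi$-compatible ideals is closed under sums, intersections, and colons (your computation $h\,\varphi(F_*x)=\varphi(F_*(h^px))$ is exactly the right one), and compatibly split ideals are radical with $F$-pure quotients. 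The spreading-out to characteristic zero at the end is also fine in outline, modulo the usual care that colons and membership in $\C_g$ are preserved under reduction modulo almost all primes.

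The genuine gap is in the initial-ideal statements. The Hilbert-function comparison across $0\to I\cap J\to I\oplus J\to I+J\to 0$ does \emph{not} yield the reverse inclusions: setting $\delta_1(e)=\dim_K(S/(\init_<(I)+\init_<(J)))_e-\dim_K(S/\init_<(I+J))_e\ge 0$ and $\delta_2(e)=\dim_K(S/\init_<(I\cap J))_e-\dim_K(S/(\init_<(I)\cap\init_<(J)))_e\ge 0$, the two additivity identities only give $\delta_1=\delta_2$, i.e.\ the two desired equalities are \emph{equivalent to each other}; neither is proved, and the claim that the sequence ``degenerates flatly to its initial counterpart'' is precisely the assertion at stake. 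Likewise, the parallel class $\C_g^{\init}$ cannot be built by induction, because the colon step does not commute with $\init_<$ — you correctly identify this as ``the main obstacle'' but offer no way around it, and in general $\init_<(I:h)\ne\init_<(I):\init_<(h)$. The missing idea, which is the heart of the Knutson--Seccia argument, is to degenerate the \emph{splitting} along with the ideals: over the Gr\"obner one-parameter family the splitting defined by $g^{p-1}$ specializes to the one defined by $\init_<(g)^{p-1}$, which is still a splitting exactly because $\init_<(g)$ is square-free; hence $\init_<(I)$ is compatibly split (so a square-free monomial ideal) for every $I\in\C_g$ without ever tracking the colon operation through the degeneration, and reducedness of the compatibly split total family over $\mathbb{A}^1$ is what forces $\delta_1=\delta_2=0$. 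Without this lemma your argument does not close.
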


\begin{example}
It can be shown that, if $g=x_1x_2\cdots x_n$,  then $\C_g$ is the set of squarefree monomial ideals.
\end{example}
 
\subsection{Determinantal facet ideals: basic properties}
Let $d,n$ be positive integers with $d+1\leq n$. Let $S \eqdef K[x_{ij}:i=1,\ldots ,n, j=0,\ldots ,d]$ be a polynomial ring in $(d+1)n$ variables over some field $K$. Set
\[X=\begin{bmatrix}
x_{01}&x_{02}&\ldots &x_{0n}\\
x_{11}&x_{12}&\ldots &x_{1n}\\
\vdots&\vdots&\ldots&\vdots\\
x_{d1}&x_{d2}&\ldots &x_{dn}
\end{bmatrix}.\]
Given $1\leq r\leq d$, and integers $0\leq a_0<a_1<\ldots <a_r\leq d$ and $1\leq b_0<\ldots <b_r\leq n$, an \emph{$(r+1)$-minor of $X$} is any element of the form \[ [a_0a_1\ldots a_r|b_0b_1\ldots b_r] \ \eqdef  \ \det \; 
\begin{bmatrix}
x_{a_0b_0}&x_{a_0b_1}&\ldots & x_{a_0b_r}\\
x_{a_1b_0}&x_{a_1b_1}&\ldots & x_{a_1b_r}\\
\vdots&\vdots&\ldots&\vdots\\
x_{a_rb_0}&x_{a_rb_1}&\ldots & x_{a_rb_r}\\
\end{bmatrix}.\]
If $r=d$, the row indices are forced to be $a_0=0,a_1=1,\ldots ,a_d=d$. For this reason we denote $[01\ldots d|b_0b_1\ldots b_d]$ simply by $[b_0b_1\ldots b_d]$. The ideal of $S$ generated by the $r+1$-minors of $X$ is denoted by $I_{r+1}(X)$. This ideal defines the variety of $(d+1)n$ matrices with entries in $K$ and with rank at most $r$. 
The set $\Pi$ of all the minors of $X$ can be partially ordered by the relation
\[[a_0a_1\ldots a_r|b_0b_1\ldots b_r]\leq [c_0c_1\ldots c_s|d_0d_1\ldots d_s] \iffdef r\geq s, \ \ a_i\leq c_i \mbox{ and } b_i\leq d_i \ \forall \ i=0,\ldots ,s.\]
In particular, for maximal minors the previous definition restricts to 
\[[a_0a_1\ldots a_d]\leq [b_0b_1\ldots b_d] \iff a_0\leq b_0, a_1\leq b_1,\ldots ,a_d\leq b_d.\]
It is not our intent to review the theory of {\it Algebras with Straightening Law} here, as the interested reader can learn it directly from the standard source \cite{BrunsVetter}. However, we wish to introduce a few concepts for the sake of clarity. The starting observation is that the polynomial ring $S$ is generated by $\Pi$ as a $K$-algebra. In fact, a basis of $S$ as $K$-vector space is given by
\[\{\pi_1\cdots \pi_m:m\in\NN, \ \pi_i\in\Pi, \ \pi_1\leq \pi_2\leq \ldots \leq \pi_m\}.\]
The elements of this $K$-basis are called {\it standard monomials}. It may happen that the product of two standard monomials is not a standard monomial. However, such product will be uniquely writable as $K$-linear combination of standard monomials, which is in some sense compatible with the poset structure on $\Pi$. This is what is known as `Straightening Law'; compare \cite[Theorem 4.11]{BrunsVetter}. What we wish to outline is that the ideals of $S$ generated by poset ideals of $\Pi$ (i.e. subsets $\Omega\subset \Pi$ such that for all $\omega\in \Omega$, $\pi\in\Pi$, $\pi\leq \omega \implies \pi\in\Omega$) are particularly nice. 

\begin{example}
For any $r\leq d$, the ideal $I_{r+1}(X)$ is generated by the poset ideal $\Omega_{\geq r+1}$ of all $t$-minors of $X$ with $t\geq r+1$. This $\Omega_{\geq r+1}$ has a unique maximal element, $[d-r\ldots d|n-r\ldots n]$.
\end{example}

Some new notation: if $1\leq i<j\leq n$, by $X_{[i,j]}$ we mean the matrix
\[X_{[i,j]}=\begin{bmatrix}
x_{0i}&x_{0,i+1}&\ldots &x_{0j}\\
x_{1i}&x_{1,i+1}&\ldots &x_{1j}\\
\vdots&\vdots&\ldots&\vdots\\
x_{di}&x_{d,i+1}&\ldots &x_{dj}
\end{bmatrix},\]
so $I_{r+1}(X_{[i,j]})$ is the ideal of $S$ generated by the $r+1$-minors of $X_{[i,j]}$, whenever $r\leq \min\{d,j-i\}$.

Eventually, we say that a term order $<$ on $S$ is a {\it diagonal term order} if, for all $1\leq r\leq d$ and integers $0\leq a_0<a_1<\ldots <a_r\leq d$ and $1\leq b_0<\ldots <b_r\leq n$, $\init_<([a_0a_1\ldots a_r|b_0b_1\ldots b_r])=x_{a_0b_0}x_{a_1b_1}\cdots x_{a_rb_r}$. For example, the lexicographic term order on $S$ extending the linear order of the variables given by $x_{ij}>x_{hk}$ if and only if $i<h$ or $i=h$ and $j<k$ is a diagonal term order. We will use the following result from \cite{St}:

\begin{theorem}[Sturmfels \cite{St}]\label{t:sturmfels}
If $<$ is a diagonal term order, $1\leq i<j\leq n$ and $r\leq\min\{d,j-i\}$, then $\{[a_0a_1\ldots a_r|b_0b_1\ldots b_r]: 0\leq a_0<a_1<\ldots <a_r\leq d \mbox{ and } i\leq b_0<\ldots <b_r\leq j\}$ is a Gr\"obner basis of the $I_{r+1}(X_{[i,j]})$.
\end{theorem}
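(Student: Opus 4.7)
The plan is to prove that the given set $G$ of $(r+1)$-minors of $X_{[i,j]}$ is not merely a generating set but an actual Gr\"obner basis, which amounts to showing
\[ \init_<\bigl(I_{r+1}(X_{[i,j]})\bigr) \; = \; \bigl( \, x_{a_0 b_0} x_{a_1 b_1} \cdots x_{a_r b_r} \; : \; 0 \leq a_0 < \cdots < a_r \leq d, \ i \leq b_0 < \cdots < b_r \leq j \, \bigr); \]
call this monomial ideal $J$. The inclusion $J \subseteq \init_<(I_{r+1}(X_{[i,j]}))$ is immediate because each generator of $J$ is the leading term of an element of $G$ under a diagonal term order, so the entire game is the reverse inclusion.

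First I would observe that passing to $\init_<$ preserves Hilbert functions, so it suffices to prove that $\dim_K(S/J)_t = \dim_K(S/I_{r+1}(X_{[i,j]}))_t$ for every degree $t$. I would compute the right-hand side via the theory of algebras with straightening laws, exactly as reviewed just before the theorem: a $K$-basis of $S/I_{r+1}(X_{[i,j]})$ is given by the standard monomials $\pi_1 \leq \pi_2 \leq \cdots \leq \pi_m$ whose factors $\pi_k$ are minors of size at most $r$ supported on the columns $i, \ldots, j$. This is the classical Hodge--De Concini--Procesi description of the quotient by a determinantal ideal (see \cite{BrunsVetter}).

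Next I would compute $\dim_K(S/J)_t$. The monomials outside $J$ are the monomials in the $x_{ab}$'s whose exponent matrix $M$, indexed by rows $\{0, \ldots, d\}$ and columns $\{i, \ldots, j\}$, contains no ``diagonal pattern'' of $r+1$ entries at positions $(a_0, b_0), \ldots, (a_r, b_r)$ with $a_0 < \cdots < a_r$, $b_0 < \cdots < b_r$, and all entries positive. I would invoke the Robinson--Schensted--Knuth (RSK) correspondence: nonnegative integer matrices are in bijection with pairs $(P,Q)$ of semistandard Young tableaux of common shape $\lambda$, where the total degree of the monomial equals the sum of all entries (equivalently $|\lambda|$). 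The absence of the forbidden pattern translates precisely into the condition that $\lambda$ has at most $r$ rows, which is exactly the shape indexing chains of minors of size $\leq r$. Summing over $t$ gives a term-by-term match of Hilbert series with the ASL side, completing the proof.

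The hard part will be calibrating the RSK bijection so that ``matrices avoiding an $(r+1)\times(r+1)$ diagonal nonzero pattern'' matches ``$\leq r$ rows'' and, simultaneously, ``pairs of tableaux of shape with $\leq r$ rows'' matches ``standard monomials in minors of size $\leq r$'' \emph{degree by degree}; this bookkeeping, although classical, must be carried out carefully to keep the column-range $[i,j]$ consistent on both sides. An alternative route that sidesteps RSK is to show directly that the Stanley--Reisner complex of $J$ is shellable (via an initial-segment-lex shelling on the generators) and to compare $f$-vectors with the Hilbert series of $S/I_{r+1}(X_{[i,j]})$ computed from Cohen--Macaulayness of generic determinantal rings; this is essentially Sturmfels' original approach in \cite{St}.
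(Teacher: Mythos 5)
The paper does not prove this statement at all: it is quoted verbatim as a known theorem of Sturmfels \cite{St}, so there is no internal proof to compare against. Your sketch is, in outline, a faithful reconstruction of Sturmfels' actual published argument (Macaulay's theorem on Hilbert functions, the straightening-law basis of the determinantal ring, and RSK to count the standard monomials of the candidate initial ideal), so the approach is sound. Two points deserve flagging. First, the ``calibration'' you defer is not mere bookkeeping in one respect: with the standard RSK conventions, the number of rows of the common shape $\lambda$ of $(P,Q)$ detects (via Greene's theorem) the longest \emph{strictly decreasing} subsequence of the biword, i.e.\ the longest \emph{antidiagonal} pattern $a_0<\cdots<a_r$, $b_0>\cdots>b_r$ in the support of the exponent matrix. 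For the diagonal term order you need the strictly increasing pattern in both coordinates, so you must first reverse the column order (or use the dual/Burge variant of RSK) before the sentence ``the absence of the forbidden pattern translates precisely into $\lambda$ having at most $r$ rows'' becomes true; as written it proves the theorem for the antidiagonal order. Second, the restriction to the column window $[i,j]$ is handled most cleanly not by dragging the window through the ASL and RSK arguments (note that for $i>1$ the set of minors generating $I_{r+1}(X_{[i,j]})$ is \emph{not} a poset ideal of $\Pi$, so the standard-monomial description of the quotient needs more care than your phrasing suggests), but by observing that $X_{[i,j]}$ is itself a generic matrix, proving the statement in the subring $K[X_{[i,j]}]$, and using that a Gr\"obner basis of an ideal remains a Gr\"obner basis after extending to a polynomial ring in additional variables. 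With those two repairs the argument goes through and is exactly the classical proof.
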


So far, by a ``simplicial complex on $n$ vertices'' we have always implicitly assumed that each vertex $i=1,\ldots ,n$ appears in the complex. From now on, we will drop this convention, i.e. henceforth a simplicial complex on a set $A$ is also a simplicial complex on any finite set $B\supset A$.

\begin{definition} Let $\Delta$ be a $d$-dimensional simplicial complex on $n$ vertices. Let $K$ be any field. Let $S = K[x_{ij}:i=1,\ldots ,n, j=0,\ldots ,d]$. The \emph{determinantal facet ideal} of $\Delta$ is the ideal
\[J_{\Delta}:=([a_0a_1\ldots a_d]:a_0a_1\ldots a_d \in \Delta)\subset S.\]
\end{definition}

When $d=1$, then $\Delta$ is a graph, and $J_{\Delta}$ is the {\it binomial edge ideal} of $\Delta$. Binomial edge ideals have been intensively studied in the recent literature: Among the many papers on this topic, see for example \cite{HerzogEtAl}, \cite{Ohtani0}, \cite{MaMu}, \cite{Matsuda}. Unlike binomial edge ideals, determinantal facet ideals are not always radical -- not even if the complex is weakly-closed:

\begin{example}\label{ex:notradical}
Consider the weakly-closed $2$-dimensional simplicial complex on five vertices
\[ \Delta = 124, 145, 234, 345.\]
Thus in the polynomial ring with $15$ variables $x_{i,j}$, for $i \in \{0,1,2\}$ and $j \in \{1, \ldots, 5\}$,  the ideal $J_{\Delta}$ is generated by the four degree-3 polynomials
\[  -x_{0,4}x_{1,2}x_{2,1} + x_{0,2}x_{1,4}x_{2,1}+x_{0,4}x_{1,1}x_{2,2}-x_{0,1}x_{1,4}x_{2,2}-x_{0,2}x_{1,1}x_{2,4}+x_{0,1}x_{1,2}x_{2,4} \:, \]
\[    -x_{0,5}x_{1,4}x_{2,1}+x_{0,4}x_{1,5}x_{2,1}+x_{0,5}x_{1,1}x_{2,4}-x_{0,1}x_{1,5}x_{2,4}-x_{0,4}x_{1,1}x_{2,5}+x_{0,1}x_{1,4}x_{2,5} \:, \]
\[    -x_{0,4}x_{1,3}x_{2,2}+x_{0,3}x_{1,4}x_{2,2}+x_{0,4}x_{1,2}x_{2,3}-x_{0,2}x_{1,4}x_{2,3}-x_{0,3}x_{1,2}x_{2,4}+x_{0,2}x_{1,3}x_{2,4} \: , \]
\[    -x_{0,5}x_{1,4}x_{2,3}+x_{0,4}x_{1,5}x_{2,3}+x_{0,5}x_{1,3}x_{2,4}-x_{0,3}x_{1,5}x_{2,4}-x_{0,4}x_{1,3}x_{2,5}+x_{0,3}x_{1,4}x_{2,5} \: . \]
It can be checked using the software Macaulay 2 \cite{m2} that $J_{\Delta}$ is not radical. 
\end{example}

Determinantal facet ideals are multi-graded. To see this, we endow $S$ with the multi-grading defined by $\deg(x_{ij})={\bf e_j}\in\NN^n$ for all $i=0,\ldots ,d, \ j=1,\ldots ,n$. Here ${\bf e_j}$ is the vector with a one in position $j$, and zeroes everywhere else.
With such grading $J_{\Delta}$ is homogeneous, and $S/J_{\Delta}$ admits a multi-graded minimal free resolution
\[0\rightarrow \bigoplus_{{\bf v}\in\NN^n}S(-{\bf v})^{\beta_{p,{\bf v}}}\rightarrow \ldots \rightarrow \bigoplus_{{\bf v}\in\NN}S(-{\bf v})^{\beta_{1,{\bf v}}}\rightarrow S\rightarrow S/J_{\Delta}\rightarrow 0,\]
where $p$ is the projective dimension of $S/J_{\Delta}$. We set $|{\bf v}|=v_1+\ldots +v_n$ for each ${\bf v}=(v_1,\ldots ,v_n)\in \NN^n$; this way the {\it graded Betti numbers} with respect to the standard grading are 
\[\beta_{i,j}=\sum_{\substack{{\bf v}\in\NN^n \\ |{\bf v}|=j}}\beta_{i,{\bf v}}.\]
In particular, $\mathrm{reg}(S/J_{\Delta})=\max\{|{\bf v}|-i:\beta_{i,{\bf v}}\neq 0\}$. In the next result, inspired by \cite[Lemma 2.1]{MaMu} $\supp({\bf v})=\{i:v_i\neq 0\}\subset [n]$ for each ${\bf v}=(v_1,\ldots ,v_n)\in \NN^n$. For each subset $W\subset 
\{1,\ldots ,n\}$, by $\Delta_W$ we denote the subcomplex of $\Delta$ induced on $W$.

\begin{proposition}\label{p:betti}
Let $\Delta$ be a $d$-dimensional simplicial complex on $n$ vertices and $W\subset [n]$. Whenever ${\bf v}\in\NN^n$ is such that $\supp({\bf v})\subset W$, 
\[\beta_{i,{\bf v}}(S/J_{\Delta})=\beta_{i,{\bf v}}(S/J_{\Delta_W}) \ \ \ \forall \ i\in\NN.\]
In particular, $\mathrm{reg}(S/J_{\Delta})\geq \mathrm{reg}(S/J_{\Delta_W})$.
\end{proposition}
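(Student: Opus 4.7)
The plan is to exploit the $\NN^n$-multigrading on $S$ in which $\deg(x_{ij})={\bf e_j}$, and to observe first that each generator $[F]=[a_0\cdots a_d]$ of $J_\Delta$ is multi-homogeneous of multidegree $\sum_{k=0}^{d}{\bf e_{a_k}}$; hence $J_\Delta$, and so $S/J_\Delta$, is $\NN^n$-graded.

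The first key step is a pointwise comparison of the multigraded pieces: whenever $\supp({\bf v})\subset W$, one has $(J_\Delta)_{\bf v}=(J_{\Delta_W})_{\bf v}$ as $K$-subspaces of $S_{\bf v}$. The inclusion $(J_{\Delta_W})_{\bf v}\subset (J_\Delta)_{\bf v}$ is immediate from $J_{\Delta_W}\subset J_\Delta$. For the reverse, $(J_\Delta)_{\bf v}$ is $K$-spanned by elements $m\cdot[F]$ where $m$ is a monomial of $S$ and $F$ is a facet of $\Delta$ with $\deg(m)+\deg([F])={\bf v}$. Since $\deg([F])=\sum_{b\in F}{\bf e_b}$ is supported on $F$, the hypothesis $\supp({\bf v})\subset W$ forces $F\subset W$ (so $F\in\Delta_W$) and forces $m$ to use only variables $x_{ij}$ with $j\in W$; therefore $m\cdot[F]\in J_{\Delta_W}$.

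Next, I would read off the multigraded Betti numbers from the Koszul complex $K_\bullet=K_\bullet(\{x_{ij}\};S)$ on all variables, letting the basis element $e_{ij}$ corresponding to $x_{ij}$ carry the same multidegree ${\bf e_j}$. Then $\beta_{i,{\bf v}}(S/J_\Delta)=\dim_K H_i(K_\bullet\otimes_S S/J_\Delta)_{\bf v}$. In multidegree ${\bf v}$ with $\supp({\bf v})\subset W$, the chain group $(K_i\otimes S/J_\Delta)_{\bf v}$ decomposes as the direct sum, over size-$i$ subsets $T$ of variables whose column indices all lie in $W$, of the pieces $(S/J_\Delta)_{{\bf v}-\deg(T)}$, each of which again has multidegree supported in $W$. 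By the key lemma these pieces coincide with $(S/J_{\Delta_W})_{{\bf v}-\deg(T)}$, and the Koszul differential (which only multiplies by variables $x_{ij}$ with $j\in W$ and stays within multidegrees supported in $W$) respects this identification. Hence the chain complexes $(K_\bullet\otimes S/J_\Delta)_{\bf v}$ and $(K_\bullet\otimes S/J_{\Delta_W})_{\bf v}$ are literally equal, and their homologies agree.

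The regularity bound is then immediate: since $J_{\Delta_W}$ only involves the variables $x_{ij}$ with $j\in W$, we have $S/J_{\Delta_W}\cong (S_W/J_{\Delta_W})\otimes_K K[x_{ij}:j\notin W]$, so any nonzero multigraded Betti number $\beta_{i,{\bf v}}(S/J_{\Delta_W})$ automatically satisfies $\supp({\bf v})\subset W$; by what we just proved, $\beta_{i,{\bf v}}(S/J_\Delta)\neq 0$ as well, giving $|{\bf v}|-i\leq\mathrm{reg}(S/J_\Delta)$, and taking the maximum yields $\mathrm{reg}(S/J_{\Delta_W})\leq\mathrm{reg}(S/J_\Delta)$. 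The only delicate point is the multigrading bookkeeping in the Koszul argument; once the key lemma is isolated, no appeal to \cite{MaMu} is needed.
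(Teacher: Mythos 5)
Your proposal is correct, but it computes the Betti numbers from the opposite side of $\mathrm{Tor}$ than the paper does. The paper takes the multigraded \emph{minimal free resolution} $\mathbb{F}$ of $S/J_{\Delta}$, discards all summands $S(-{\bf v})$ with $\supp({\bf v})\not\subset W$, and checks that the resulting subcomplex $\mathbb{F}'$ is still acyclic (by comparing $\mathbb{F}'_{\bf u}$ with $\mathbb{F}_{\bf u}$ in each multidegree ${\bf u}$ supported in $W$) and has cokernel $S/J_{\Delta_W}$; since $\mathbb{F}'$ inherits minimality, the Betti numbers of $S/J_{\Delta_W}$ are read off directly. You instead resolve the residue field by the Koszul complex and compare $(K_\bullet\otimes S/J_{\Delta})_{\bf v}$ with $(K_\bullet\otimes S/J_{\Delta_W})_{\bf v}$ termwise, after isolating the key identity $(J_{\Delta})_{\bf v}=(J_{\Delta_W})_{\bf v}$ for $\supp({\bf v})\subset W$ --- an identity the paper uses only implicitly, through the equality $\mathbb{F}'_{\bf u}=\mathbb{F}_{\bf u}$ and the identification of $\operatorname{coker}\phi$. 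Both arguments rest on the same support bookkeeping. Your route has the advantage of being self-contained and of not requiring one to verify that the truncated complex is a complex with the correct cokernel and that minimality is preserved; the paper's route has the advantage of exhibiting the minimal free resolution of $S/J_{\Delta_W}$ itself as a subcomplex of that of $S/J_{\Delta}$, which is slightly more information. Your handling of the regularity inequality (all Betti multidegrees of $S/J_{\Delta_W}$ are automatically supported in $W$, since $J_{\Delta_W}$ lives in the subring $K[x_{ij}: j\in W]$) is a point the paper leaves tacit, and it is good that you made it explicit.
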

\begin{proof}
Let $\mathbb{F}$ be the multi-graded minimal free resolution of $S/J_{\Delta}$:
\[\mathbb{F}: 0\rightarrow \bigoplus_{{\bf v}\in\NN^n}S(-{\bf v})^{\beta_{p,{\bf v}}}\rightarrow \ldots \rightarrow \bigoplus_{{\bf v}\in\NN^n}S(-{\bf v})^{\beta_{1,{\bf v}}}\rightarrow S\rightarrow 0.\]
Consider the complex of multi-graded $S$-modules
\[\mathbb{F}':0\rightarrow \bigoplus_{\substack{{\bf v}\in\NN^n \\ \supp({\bf v})\subset W}}S(-{\bf v})^{\beta_{p,{\bf v}}}\rightarrow \ldots \rightarrow \bigoplus_{\substack{{\bf v}\in\NN^n \\ \supp({\bf v})\subset W}}S(-{\bf v})^{\beta_{1,{\bf v}}}\xrightarrow{\phi} S\rightarrow 0.\]
The cokernel of $\phi$ is $S/J_{\Delta_W}$, hence all we need to show is that $\mathbb{F}'$ is acyclic. But since the minimal generators of the free $S$-modules in $\mathbb{F}'$ involve only the variables $x_{ij}$ with $j\in W$, to show that $\mathbb{F}'$ is acyclic is enough to show that $\mathbb{F}'_{{\bf u}}$ is acyclic for any ${\bf u}\in\NN^n$ with $\supp({\bf u})\subset W$.
On the other hand, for any ${\bf v}\in\NN^n$, $S(-{\bf v})_{{\bf u}}$ is nonzero if and only if ${\bf u}-{\bf v}\in\NN^n$: in particular $S(-{\bf v})_{{\bf u}}\neq 0$ implies $\supp({\bf v})\subset\supp({\bf u})\subset W$, hence $\mathbb{F}'_{{\bf u}}=\mathbb{F}_{{\bf u}}$ whenever $\supp({\bf u})\subset W$. We conclude since $\mathbb{F}_{{\bf u}}$ is acyclic for any ${\bf u}\in\NN^n$.
\end{proof}

\subsection{Many radical  and many F-pure determinantal facet ideals}

Let us warm up by studying the algebraic counterpart of the traceability of $\Delta$:

\begin{proposition}
Let $\Delta$ be a traceable $d$-dimensional simplicial complex on $n$ vertices. Then $\height(J_{\Delta})=n-d$. Furthermore, if $J_{\Delta}$ is radical and unmixed, then it admits a square-free initial ideal. If in addition $K$ has positive characteristic, then $S/J_{\Delta}$ is even $F$-pure.
\end{proposition}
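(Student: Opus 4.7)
The strategy is to combine Sturmfels' Gr\"obner basis theorem for determinantal ideals with the theory of Knutson ideals from Theorem~\ref{t:lisa}. After relabeling so that $\Delta$ contains $H_1,\ldots,H_{n-d}$, set $f_i := [i\, i+1 \cdots i+d]$ and $J_0 := (f_1,\ldots,f_{n-d}) \subseteq J_{\Delta}$, and fix a diagonal term order $<$. Since $\init_<(f_i) = x_{0,i}x_{1,i+1}\cdots x_{d,i+d}$ and these $n-d$ squarefree monomials involve pairwise disjoint sets of variables, Buchberger's criterion makes $\{f_1,\ldots,f_{n-d}\}$ a Gr\"obner basis of $J_0$. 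Hence $\init_<(J_0)$, and therefore $J_0$ itself, is a radical unmixed complete intersection of height $n-d$, giving $\height(J_{\Delta}) \geq n-d$; the obvious inclusion $J_{\Delta} \subseteq I_{d+1}(X)$ together with the classical height formula $\height(I_{d+1}(X)) = n-d$ settles the first claim.

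Next, set $f := f_1 f_2 \cdots f_{n-d}$. By disjointness of the $\init_<(f_i)$'s, the leading term $\init_<(f) = \prod_i \init_<(f_i)$ is squarefree, so $f$ is admissible for Theorem~\ref{t:lisa}. The $f_i$'s are distinct irreducibles in the UFD $S$, hence pairwise coprime; therefore $(f_i) = (f) : \prod_{j \ne i} f_j$ lies in $\C_f$ by colon-closure, and the sum-closure of $\C_f$ then gives $J_0 \in \C_f$.

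Now suppose $J_{\Delta}$ is radical and unmixed. Since $J_0 \subseteq J_{\Delta}$ and both ideals have height $n-d$, the minimal primes of $J_{\Delta}$ form a subset $\{P_i\}_{i \in T}$ of the minimal primes $P_1,\ldots,P_k$ of $J_0$ (which are unmixed of height $n-d$ as $J_0$ is a complete intersection). Radicality gives $J_{\Delta} = \bigcap_{i \in T} P_i$. Let $L := \bigcap_{j \notin T} P_j$; since the $P_i$'s are distinct primes of the same height (none contains another), $L \subseteq P_i$ iff $i \notin T$, whence
\[ J_0 : L \;=\; \bigcap_i (P_i : L) \;=\; \bigcap_{i \in T} P_i \;=\; J_{\Delta}. \]
Writing $L = (h_1,\ldots,h_s)$, we obtain $J_{\Delta} = \bigcap_{l}(J_0 : h_l) \in \C_f$ by the colon- and intersection-closure of $\C_f$. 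Theorem~\ref{t:lisa} then delivers both the squarefree initial ideal of $J_{\Delta}$ and, in positive characteristic, the $F$-purity of $S/J_{\Delta}$. The delicate point is precisely the identification $J_{\Delta} = J_0 : L$, which rests on the unmixedness of $J_{\Delta}$ and on the control of the minimal primes of $J_0$; everything else is either a Gr\"obner calculation or a direct invocation of the closure properties of the Knutson class.
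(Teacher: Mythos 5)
Your proof is correct and follows essentially the same route as the paper: exhibit the complete intersection $C=([1\ldots d+1],\ldots,[n-d\ldots n])$ via the pairwise coprime diagonal leading terms to get the height, then place $J_\Delta$ in the Knutson class $\C_g$ of $g=[1\ldots d+1]\cdots[n-d\ldots n]$ and invoke Theorem~\ref{t:lisa}. The only difference is that you spell out the colon-ideal identification $J_\Delta = C:L$ via the minimal primes of $C$, a step the paper compresses into the assertion that $J_\Delta$ ``must be of the form $C:h$''; your expanded justification is sound.
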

\begin{proof}
Let us fix a labeling for which $\Delta$ is traceable. Set  \[C \eqdef ([1\ldots d+1],[2\ldots d+2],\ldots ,[n-d\ldots n])\subset J_{\Delta}.\] Let us fix a diagonal term order $<$ on $S$. Note that 
\[ \init_<([i\ldots i+d])=x_{0i}x_{1(1+i)}\cdots x_{d(d+i)} \quad \textrm{ and } \quad \init_<([j\ldots j+d])=x_{0j}x_{1(1+j)}\cdots x_{d(d+j)}\] 
are coprime if $i\neq j$. So $\{[1\ldots d+1],[2\ldots d+2],\ldots ,[n-d\ldots n]\}$ is a Gr\"obner basis of $C$ and
\[\init_<(C)=(x_{01}x_{12}\cdots x_{d (d+1)}, \: x_{02}x_{13}\cdots x_{d (d+2)}, \; \ldots \; , \: x_{0(n-d)}x_{1(1+n-d)}\cdots x_{dn})\]
is a complete intersection of height $n-d$. Hence $C$ is a complete intersection of height $n-d$ inside $J_{\Delta}$, which implies $\height(J_{\Delta})\geq n-d$. On the other hand $\height(J_{\Delta})\leq n-d$ because $J_{\Delta}$ is contained in $I_{d+1}(X)$, which has height equal to $n-d$. As for the final claim, set $g=[1\ldots d+1]\cdots [n-d\ldots n]$. Notice that $\init_<(g)$ is square-free. Obviously, we also have $C\in \C_g$. But if $J_{\Delta}$ is radical and unmixed, since $\height(J_{\Delta})=\height(C)$ by the previous part, then $J_{\Delta}$ must be of the form $C:h$ for some $h\in S$. Thus $J_{\Delta}\in \C_g$ and we conclude via Theorem \ref{t:lisa}.
\end{proof}

The next lemma will help us identify a large class of complexes whose determinantal facet ideal is indeed radical.

\begin{lemma}\label{l:poset}
Let $1\leq a_0<a_1<\ldots <a_d\leq n$, and $\Gamma_{{\bf a}}$ the simplicial complex generated by the facets $a_0i_1\ldots i_d$ with $i_j\leq a_j$ for all $j=1,\ldots ,d$. Then
\[J_{\Gamma_{{\bf a}}}=I_{d+1}(X_{[a_0,a_d]})\cap I_{d}(X_{[a_0,a_{d-1}]}) \cap I_{d-1}(X_{[a_0,a_{d-2}]})\cap \ldots \cap I_1(X_{[a_0,a_0]}).\]
Analogously, if $\Gamma^{{\bf a}}$ is the simplicial complex generated by the facets $i_0i_1\ldots a_d$ with $i_j\geq a_j$ for all $j=0,\ldots ,d-1$, then
\[J_{\Gamma^{{\bf a}}}=I_{d+1}(X_{[a_0,a_d]})\cap I_{d}(X_{[a_1,a_{d}]}) \cap I_{d-1}(X_{[a_2,a_{d}]})\cap \ldots \cap I_1(X_{[a_{d-1},a_d]}).\]
\end{lemma}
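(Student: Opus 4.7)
The containment $J_{\Gamma_{\bf a}} \subseteq \mathrm{RHS}$ is immediate from iterated Laplace expansion. For each facet $a_0 i_1 \cdots i_d$ of $\Gamma_{\bf a}$ (so $i_l \leq a_l$) and each $k \in \{0, \ldots, d\}$, expanding the minor $[a_0 i_1 \cdots i_d]$ along its last $k$ columns $i_{d-k+1}, \ldots, i_d$ writes it as a linear combination of $(d-k+1)$-minors whose column indices lie in $\{a_0, i_1, \ldots, i_{d-k}\} \subseteq [a_0, a_{d-k}]$. Hence $[a_0 i_1 \cdots i_d] \in I_{d-k+1}(X_{[a_0, a_{d-k}]})$ for every $k$, giving the first containment.

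For the reverse inclusion I would argue geometrically. The $\mathrm{RHS}$ is radical as an intersection of the (classically prime) determinantal ideals $I_r(X_{[i,j]})$, so after establishing radicality of $J_{\Gamma_{\bf a}}$ independently, it will suffice to show $V(J_{\Gamma_{\bf a}}) = V(\mathrm{RHS})$. Here $V(\mathrm{RHS}) = \bigcup_{k=0}^{d} V(I_{d-k+1}(X_{[a_0, a_{d-k}]}))$, and the inclusion $V(\mathrm{RHS}) \subseteq V(J_{\Gamma_{\bf a}})$ is clean: if some column-submatrix $M[a_0, \ldots, a_{d-k}]$ has rank $\leq d-k$, then for any facet $a_0 i_1 \cdots i_d$ the columns $M_{a_0}, M_{i_1}, \ldots, M_{i_{d-k}}$ sit inside that submatrix (since $i_l \leq a_l \leq a_{d-k}$ for $l \leq d-k$) and are hence linearly dependent; extending with further columns preserves dependence, so the facet minor vanishes at $M$. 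Conversely, if $M \in V(J_{\Gamma_{\bf a}})$ but $\mathrm{rank}\, M[a_0, a_0+1, \ldots, a_l] > l$ for every $l \in \{0, \ldots, d\}$, I would inductively build nested sets $\{a_0\} = T_0 \subset T_1 \subset \cdots \subset T_d$ with $T_l \subseteq [a_0, a_l]$, $|T_l| = l+1$, and $(M_j)_{j \in T_l}$ linearly independent (at each step the rank hypothesis furnishes a column in $[a_0, a_l]$ outside the span of $T_{l-1}$). Sorting $T_d$ as $\{a_0 < c_1 < \cdots < c_d\}$, the inclusion $T_l \subseteq [a_0, a_l]$ with $|T_l| = l+1$ forces $c_l \leq a_l$, so $a_0 c_1 \cdots c_d$ is a facet of $\Gamma_{\bf a}$; but independence of $(M_j)_{j \in T_d}$ means the corresponding minor is nonzero at $M$, contradicting $M \in V(J_{\Gamma_{\bf a}})$.

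Combined with $J_{\Gamma_{\bf a}} \subseteq \mathrm{RHS}$, the varietal identity yields $\mathrm{rad}(J_{\Gamma_{\bf a}}) = \mathrm{RHS}$, so it only remains to prove $J_{\Gamma_{\bf a}}$ is itself radical; to avoid circularity with Theorem \ref{t:s-c-f} this has to be done directly. I would identify $J_{\Gamma_{\bf a}}$ as a ladder determinantal ideal in the sense of Conca (the facet set of $\Gamma_{\bf a}$ is a staircase shape in the $(d+1)$-tuples of column indices with first entry fixed to $a_0$) and invoke the classical fact that such ideals admit a squarefree Gr\"obner basis with respect to a diagonal term order, hence are radical; alternatively, one verifies that $J_{\Gamma_{\bf a}}$ sits inside a Knutson class $\mathcal{C}_g$ of Theorem \ref{t:lisa}, e.g.\ for $g = [a_0\,(a_0+1)\cdots(a_0+d)]$ (whose diagonal initial is already squarefree) together with a short sequence of colon, sum, and intersection operations. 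The main obstacle is carrying out this last identification cleanly — matching $J_{\Gamma_{\bf a}}$ to the precise standard ladder or Knutson setup. Finally, the companion statement for $\Gamma^{\bf a}$ follows from the one for $\Gamma_{\bf a}$ by applying the involution $j \mapsto n+1-j$ of column indices together with the row reversal $i \mapsto d-i$, which interchanges $\Gamma_{\bf a}$ with $\Gamma^{{\bf a}^*}$ for the reflected tuple ${\bf a}^* = (n+1-a_d, \ldots, n+1-a_0)$ and sends each $I_r(X_{[i,j]})$ to $I_r(X_{[n+1-j,\,n+1-i]})$.
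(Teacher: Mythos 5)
Your forward containment and your variety computation are both correct, and the symmetry reduction for $\Gamma^{\bf a}$ is fine. The proof stalls, as you yourself flag, at the radicality of $J_{\Gamma_{\bf a}}$, and neither of your proposed fixes works as stated. First, $J_{\Gamma_{\bf a}}$ is \emph{not} a ladder determinantal ideal: already for $d=1$, $a_0=1$, $a_1=3$ the ideal is $([12],[13])$, and any ladder (or submatrix) containing the supports of $[12]$ and $[13]$ would also contribute the minor $[23]$, which is not in the ideal. What you have is an ideal generated by a \emph{poset ideal} of the poset $\Pi$ of minors (the first column index is pinned at $a_0$, not ranging over a region), and that is a different, though classical, object. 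Second, the Knutson route is circular as proposed: the only known way to exhibit $J_{\Gamma_{\bf a}}$ as a member of $\C_\mmu$ via sums, colons and intersections is precisely the intersection formula of this lemma --- that is exactly how Theorem \ref{t:s-c-f} consumes it --- so you cannot invoke it here without a genuinely different construction. (A smaller issue: equality of varieties only gives equality of radicals over $\overline{K}$, so over a general field you would additionally need geometric reducedness, not just reducedness.)

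The paper's argument bypasses all of this by working inside the straightening-law structure on $\Pi$. After normalizing $a_0=1$, the generators of $J_{\Gamma_{\bf a}}$ are exactly the minors in the poset ideal $\Omega=\{\pi\in\Pi:\pi\le[a_0\ldots a_d]\}$ (the condition $b_0\le a_0=1$ pins the first column, and only maximal minors can lie below a maximal minor), while $I_{j+1}(X_{[1,a_j]})$ is generated by the poset ideal $\Omega_j=\{\pi\le[d-j\ldots d\,|\,a_j-j\ldots a_j]\}$. One checks combinatorially that $\Omega=\bigcap_{j=0}^d\Omega_j$, and \cite[Proposition (5.2)]{BrunsVetter} converts intersections of poset ideals directly into intersections of the ideals they generate. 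This yields the identity at the level of ideals, with no passage through varieties; radicality comes for free, since the quotient by an ideal generated by a poset ideal is again an ASL and hence reduced. If you want to salvage your geometric approach, the missing ingredient is exactly this standard-monomial fact; otherwise the ASL route is both shorter and complete.
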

\begin{proof}
Since the two identities are symmetric,  we will  only prove the first one. The containment `$\subseteq$' is obvious; so let us show `$\supseteq$'. 
To make the notation lighter, we make the harmless assumption that $a_0=1$. Note that $J_{\Gamma_{{\bf a}}}$ is generated by a poset ideal, namely by \[\Omega=\{\pi \in \Pi: \pi \leq [a_0\ldots a_d]\}.\] Similarly, for all $j=0,\ldots ,d$, the ideal $I_{j+1}(X_{[1,a_{j}]})$ is generated by the poset ideal
\[\Omega_j=\{\pi \in \Pi: \pi \leq [d-j\ldots d|a_j-j\ldots a_j]\}.\]
Since it is easy to check that $\Omega=\cap_{j=0}^d\Omega_j$, via \cite[Proposition (5.2)]{BrunsVetter} we obtain
\[J_{\Gamma_{{\bf a}}}=I_{d+1}(X_{[1,a_d]})\cap I_{d}(X_{[1,a_{d-1}]}) \cap I_{d-1}(X_{[1,a_{d-2}]})\cap \ldots \cap I_1(X_{[1,1]}). \qedhere\]
\end{proof}

Now, let $\mmu \in S$ be the product of the minors whose main diagonals are illustrated in the $7\times 13$ matrix below.

\centerline{\includegraphics[height=5.5cm]{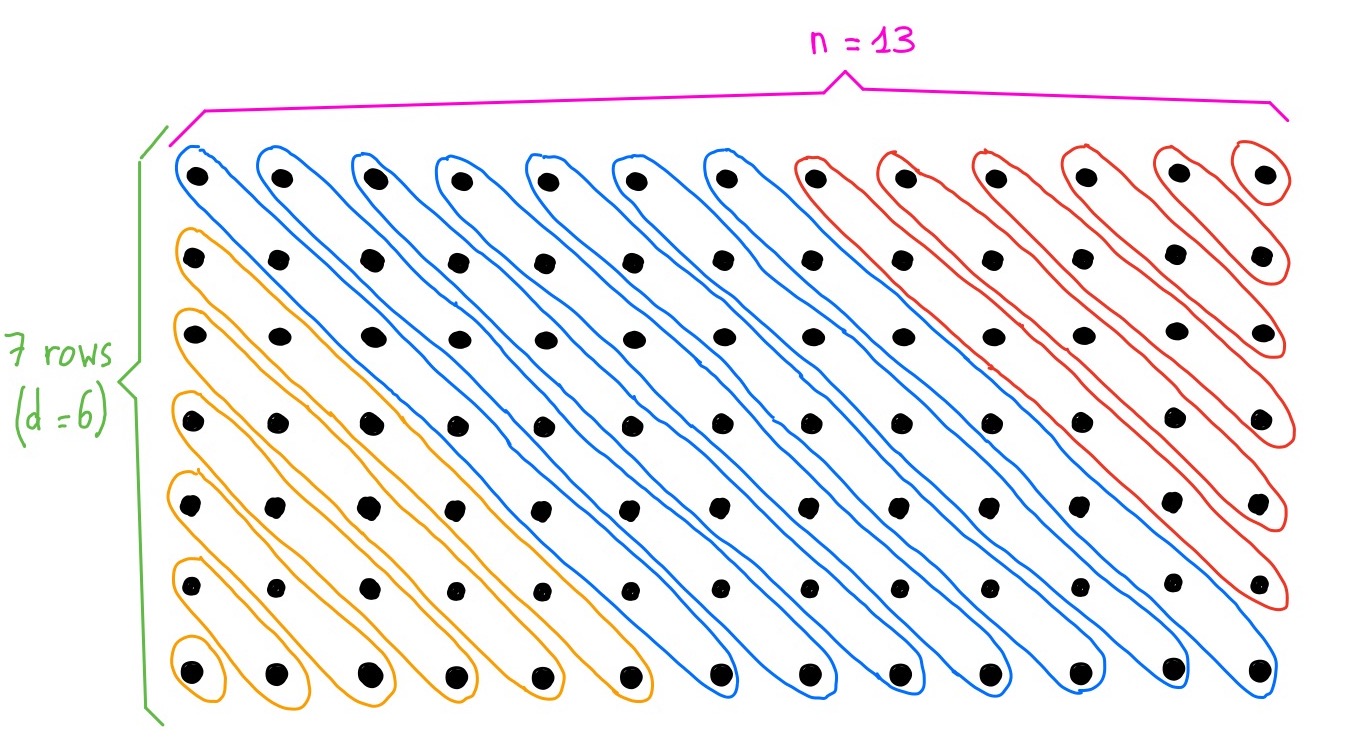}}

More precisely,
\begin{align*}
\mmu  \eqdef & [d|1][d-1,d|1,2]\cdots [1,2,\ldots ,d-1,d|1,2,\ldots ,d-1,d]\cdot \\
& [1,2,\ldots ,d,d+1]\cdots [n-d,n-d+1,\ldots ,n-1,n]\cdot \\
& [n-d+1,n-d+2,\ldots ,n-1,n|0,1,\ldots ,d-2,d-1]\cdots [n-1,n|0,1][n|0].
\end{align*}

The reason we defined $\mmu$ this way is that if $<$ is a diagonal term order, we have
\[\init_{<}(\mmu)=\prod_{i=0}^d\prod_{j=1}^nx_{ij}.\]
Using this $\mmu$, we are now ready to prove the first main result of this Chapter.

\begin{theorem}\label{t:s-c-f}
Let $\Delta$ be a $d$-dimensional semi-closed simplicial complex on $n$ vertices. Then $J_{\Delta}$ is a radical ideal. Moreover:
\begin{compactenum}[ \rm (1)]
\item For any diagonal term order (compatible with the labeling which makes $\Delta$ semi-closed), $\init(J_{\Delta})$ is a squarefree term ideal.
\item If the field $K$ has positive characteristic, $S/J_{\Delta}$ is $F$-pure.
\end{compactenum}
\end{theorem}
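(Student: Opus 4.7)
The plan is to apply Theorem \ref{t:lisa} with $g = \mmu$, the polynomial defined just before the theorem. Since $\init_<(\mmu) = \prod_{i,j} x_{ij}$ is already squarefree, once we can show $J_\Delta \in \mathcal{C}_\mmu$, the radicality of $\init_<(J_\Delta)$ (whence of $J_\Delta$ itself) and the $F$-purity of $S/J_\Delta$ in positive characteristic will both follow immediately. So the entire task reduces to showing $J_\Delta \in \mathcal{C}_\mmu$.

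The first (and main) step is to establish a lemma: for every $1 \le i < j \le n$ and every $k$ with $1 \le k \le \min\{d+1, j-i+1\}$, the ideal $I_k(X_{[i,j]})$ lies in $\mathcal{C}_\mmu$. The polynomial $\mmu$ was constructed precisely so that its factorization into minors -- the staircase of minors of sizes $1, 2, \ldots, d$ at the left boundary, the maximal minors of the consecutive windows along the diagonal, and the mirrored staircase at the right boundary -- gives access, via iterated colons, sums, and intersections starting from $(\mmu)$, to all the minor ideals $I_k(X_{[i,j]})$. I expect this to be the technical heart of the proof, and it is essentially the content that can be extracted from Seccia's recent work~\cite{Se2}; the main obstacle is checking that the staircase of sizes available in $\mmu$ is rich enough to reach \emph{every} submatrix and \emph{every} minor size by those three operations.

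With that lemma in hand, the rest of the argument runs cleanly on the semi-closed hypothesis. For each facet $F = a_0 a_1 \cdots a_d$ of $\Delta$, either condition (i) or condition (ii) of the semi-closed definition holds. In case (i), $\Gamma_{\bf a} \subseteq \Delta$ and Lemma~\ref{l:poset} gives
\[
J_{\Gamma_{\bf a}} \;=\; I_{d+1}(X_{[a_0,a_d]}) \cap I_d(X_{[a_0,a_{d-1}]}) \cap \cdots \cap I_1(X_{[a_0,a_0]}),
\]
which belongs to $\mathcal{C}_\mmu$ by the preliminary lemma together with closure under intersection. In case (ii), the symmetric identity of Lemma~\ref{l:poset} applied to $\Gamma^{\bf a}$ puts $J_{\Gamma^{\bf a}}$ in $\mathcal{C}_\mmu$ in the same way.

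Finally, every generator $[a_0\cdots a_d]$ of $J_\Delta$ is the ``top'' generator of one of these ideals $J_{\Gamma_{\bf a}}$ (case (i)) or $J_{\Gamma^{\bf a}}$ (case (ii)), each of which is itself contained in $J_\Delta$. Hence $J_\Delta$ equals the sum $\sum_F J_{\Gamma_F}$ indexed by the facets of $\Delta$, and closure of $\mathcal{C}_\mmu$ under sums yields $J_\Delta \in \mathcal{C}_\mmu$. Theorem~\ref{t:lisa} then delivers both conclusions (1) and (2). The delicate part is really the preliminary lemma; once it is in place, the semi-closed condition is tailor-made to cover every generator of $J_\Delta$ using one of the two directions provided by Lemma~\ref{l:poset}.
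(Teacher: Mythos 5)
Your proposal is correct and follows essentially the same route as the paper: reduce to showing $J_\Delta\in\C_\mmu$, invoke the fact from \cite{Se2} that every $I_{r+1}(X_{[i,j]})$ lies in $\C_\mmu$, use Lemma~\ref{l:poset} and closure under intersections to place each $J_{\Gamma_{\bf a}}$ or $J_{\Gamma^{\bf a}}$ in $\C_\mmu$, and conclude by closure under sums together with Theorem~\ref{t:lisa}. The only difference is presentational: what you isolate as a ``preliminary lemma'' to be proved is cited by the paper directly as a result of \cite{Se2}.
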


\begin{proof}
We will prove that if $\Delta$ is semi-closed with respect to the given labeling then $J(\Delta)\in \C_\mmu$, whence both claims follow by Theorem \ref{t:lisa}.
Let $1\leq a_0<a_1<\ldots <a_d\leq n$. Using the notation of Lemma \ref{l:poset}, since $\Delta$ is semi-closed, either $\Gamma_{{\bf a}}$ or $\Gamma^{{\bf a}}$ is contained in $\Delta$ whenever $a_0a_1\cdots a_d\in \Delta$. For any $a_0a_1\cdots a_d\in \Delta$, set
$\Delta_{{\bf a}}=\Gamma_{{\bf a}}$ if $\Gamma_{{\bf a}}\subset \Delta$, and $\Delta_{{\bf a}}=\Gamma^{{\bf a}}$ otherwise. Then
\[\Delta \ = \ \bigcup_{a_0a_1\cdots a_d\in \Delta}\Delta_{{\bf a}}.\]
In particular, 
\[ J(\Delta) \ = \ \displaystyle\sum_{a_0a_1\cdots a_d\in \Delta}J(\Delta_{{\bf a}}).\]  
Since $\C_\mmu$ is closed under sums, in order to show that $J(\Delta)\in \C_\mmu$ we only need to check that each $J(\Delta_{\bf a})\in \C_\mmu$. To verify this, we use a result in \cite{Se2}: The ideal $I_{r+1}(X_{[ij]})\in \C_\mmu$ whenever $1\leq i<j\leq n$ and $0\leq r\leq \min\{d,j-i\}$. Since $\C_\mmu$ is closed under intersections, Lemma \ref{l:poset} guarantees that $J(\Delta_{{\bf a}})\in \C_\mmu$, as desired.
\end{proof}

\begin{remark}
The assumption ``semi-closed'' is best possible: if we replace it with ``weakly-closed'', the theorem no longer holds, cf.~Example \ref{ex:notradical}. That said, the converse of Theorem  \ref{t:s-c-f} is false. To see this, consider the non-weakly-closed complex $U^2_3=124,345,467$ of Figure \ref{fig:ToyExamples1}. If $g=[124][345][467]$ then for a diagonal term order $\init(g)=x_{01}x_{12}x_{24}x_{03}x_{14}x_{25}x_{04}x_{16}x_{27}$, which is squarefree. Obviously $[124],[345],[467]\in \C_g$, hence $J_{\Delta}\in \C_g$. So $\init(J_{\Delta})$ is squarefree, and, in the positive characteristic case, $S/J_{\Delta}$ is $F$-pure by Theorem \ref{t:lisa}. On the other hand, when $d=1$ Theorem \ref{t:s-c-f} is true for all weakly closed graphs, via the main result of Matsuda \cite{Matsuda}. This shows that the techniques used in \cite{Matsuda} do not generalize to higher dimensions.
\end{remark}

\begin{remark}
Suppose that $K$ has positive characteristic. Theorem \ref{t:s-c-f} implies that, whenever $\Omega$ is a poset ideal of $\Pi$ consisting only of maximal minors, then the corresponding ASL is $F$-pure. On the other hand, some ASLs are not $F$-pure, as explained in \cite[Remark 5.2]{KoVa}. We do not know whether all the ASLs on a poset ideal of $\Pi$ are $F$-pure.
\end{remark}

\begin{remark}
In positive characteristic, having a square-free initial ideal or an $F$-pure quotient are unrelated properties.  Many ideals, like $I=(x^2+xy+y^2)\subset S \eqdef \mathbb{Z}/p\mathbb{Z}[x,y]$, for $p$ prime, have the property that $S/I$ is $F$-pure even if $\init(I)$ is not square-free for any term order. On the other hand, the binomial edge ideal of a 5-cycle is not $F$-pure in characteristic 2 \cite[Example 2.7]{Matsuda}, even if it admits a squarefree initial ideal.
See \cite{KoVa} for a discussion on the relationship between the two properties of being $F$-pure and having a squarefree initial ideal.
\end{remark}

Theorem \ref{t:s-c-f} allows us to characterize the determinantal facet ideals having a linear resolution: It turns out that there is only one. This extends to all dimensions the result for graphs by Saeedi-Madani and Kiani \cite{SMK}.

\begin{corollary} \label{cor:SMK}
Let $\Delta$ be a pure $d$-dimensional simplicial complex on $n$ vertices. 
\[ J_{\Delta} \textrm{ has a linear resolution } \ \Longleftrightarrow \ \Delta = \Sigma^d_n. \]
\end{corollary}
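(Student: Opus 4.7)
For the ``if'' direction, when $\Delta=\Sigma^d_n$ the ideal $J_\Delta$ equals $I_{d+1}(X)$, the ideal of maximal minors of the generic $(d+1)\times n$ matrix $X$, and admits a linear Eagon--Northcott resolution.

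For the ``only if'' direction, suppose $J_\Delta$ has a linear resolution, so $\operatorname{reg}(S/J_\Delta)=d$; by Proposition~\ref{p:betti} we also have $\operatorname{reg}(S/J_{\Delta_W})\le d$ for every induced subcomplex. I argue by contradiction: assume $\Delta\neq\Sigma^d_n$. Since $\Delta$ is pure $d$-dimensional using all $n$ vertices, necessarily $n\ge d+2$ and $\Delta$ has at least two facets. The plan is to exhibit a $W$ with $\operatorname{reg}(S/J_{\Delta_W})>d$. The central case is to locate two adjacent facets $F_1,F_2\in\Delta$ (with $|F_1\cap F_2|=d$, so that $W:=F_1\cup F_2$ has exactly $d+2$ vertices) such that $\Delta_W\subsetneq \Sigma^d_W$. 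Such a pair must exist under our assumptions: otherwise either $\Delta$ has no adjacent pairs at all, in which case its $k\ge 2$ facets are pairwise non-adjacent, the associated minors are distinct irreducibles of the domain $S$ forming a regular sequence, and the Koszul complex gives $\operatorname{reg}(J_\Delta)=kd+1\ge 2d+1$, already a contradiction; or else every adjacent pair $F_1,F_2\in\Delta$ satisfies $\Delta_{F_1\cup F_2}=\Sigma^d_{F_1\cup F_2}$, and iterating this ``adjacency closure'' along the (connected) single-swap graph on $(d+1)$-subsets of $[n]$ forces $\Delta=\Sigma^d_n$, contradicting the hypothesis.

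With such a $W$ fixed, $\Delta_W$ is a proper subcomplex of $\Sigma^d_{d+2}$ with $k$ facets, $2\le k\le d+1$. I would then show $\operatorname{reg}(J_{\Delta_W})>d+1$ by exploiting the Laplace--Pl\"ucker relations for the $(d+1)\times(d+2)$ submatrix $X_W$: for each row $i$, the identity $\sum_{j=1}^{d+2}(-1)^{j+1}x_{i,j}M_j=0$ involves all $d+2$ maximal minors $M_1,\dots,M_{d+2}$ of $X_W$ with nonzero linear coefficients, and these relations generate the linear syzygies of the full ideal $I_{d+1}(X_W)$. Since any proper subset of the $M_j$ misses at least one minor, none of the Pl\"ucker relations descends to a linear syzygy of $J_{\Delta_W}$; eliminating a missing minor $M_k$ between two Pl\"ucker relations from different rows produces a non-trivial first syzygy among the remaining minors with quadratic coefficients (built from $2\times 2$ minors of $X_W$), hence of total degree $d+3$. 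This forces $\beta_{1,d+3}(J_{\Delta_W})>0$ while $\beta_{1,d+2}(J_{\Delta_W})=0$, giving $\operatorname{reg}(J_{\Delta_W})\ge d+2$, the desired contradiction with Proposition~\ref{p:betti}.

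The main obstacle is to verify rigorously that $J_{\Delta_W}$ has \emph{no} linear syzygies in the missing-minor case. This should follow from the classical fact that the linear-syzygy space of $I_{d+1}(X_W)$ is spanned by the Pl\"ucker relations, each of which involves every maximal minor of $X_W$ with nonzero coefficient; hence restricting to a sub-ideal missing at least one generator collapses the entire linear strand of the first-syzygies module.
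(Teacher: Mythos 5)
Your overall skeleton (Eagon--Northcott for ``$\Leftarrow$''; restriction to a $(d+2)$-vertex induced subcomplex combined with Proposition~\ref{p:betti} for ``$\Rightarrow$'') matches the paper's, but your method for bounding $\mathrm{reg}(S/J_{\Delta_W})$ from below is genuinely different: the paper relabels $\Delta_W$ into a semi-closed complex, invokes Theorem~\ref{t:s-c-f} and Conca--Varbaro to pass to the initial ideal, computes that initial ideal via Lemma~\ref{l:poset} and Theorem~\ref{t:lisa}, and exhibits a minimal generator of degree $>d+1$; you instead work directly with the linear strand of the Eagon--Northcott resolution of $I_{d+1}(X_W)$. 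That algebraic core is sound and arguably more self-contained: the first syzygy module of the $d+2$ maximal minors of the generic $(d+1)\times(d+2)$ matrix $X_W$ is free on the $d+1$ row--Laplace relations in degree $d+2$, and no nonzero $K$-linear combination of them has vanishing coefficient on any single minor, so $\beta_{1,d+2}(J_{\Delta_W})=0$ whenever $\Delta_W$ omits a facet of $\Sigma^d_W$. You do not even need the explicit degree-$(d+3)$ syzygy (which, as you construct it, only cancels \emph{one} missing minor and hence is not a syzygy of $J_{\Delta_W}$ when several minors are absent): once $\beta_{1,d+2}=0$ and $J_{\Delta_W}$ has at least two minimal generators, the nonzero first syzygy module lives entirely in degrees $\ge d+3$, forcing $\mathrm{reg}(S/J_{\Delta_W})>d$.

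The genuine gap is in the combinatorial reduction to such a $W$. The dichotomy ``either no two facets are adjacent, or every adjacent pair spans a full skeleton and iterating forces $\Delta=\Sigma^d_n$'' is false: take $d=2$, $n=7$ and $\Delta=123,\,124,\,134,\,234,\,567$. Every adjacent pair of facets spans the full $2$-skeleton on $\{1,2,3,4\}$, yet $\Delta\ne\Sigma^2_7$; moreover \emph{no} $4$-element set $W$ induces between $2$ and $3$ facets (any $W$ containing two of the first four facets equals $\{1,2,3,4\}$ and induces all four, and any $W$ containing $567$ together with another facet has at least six vertices), so the input your syzygy argument requires does not exist. The same phenomenon occurs for $d=1$ with $\Delta=12,\,34$. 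These leftover complexes must be treated separately (e.g.\ by a Koszul computation on a larger $W$), and your handling of them is itself incomplete: that $k\ge 3$ pairwise non-adjacent maximal minors form a regular sequence is unsubstantiated (for two distinct irreducibles in a domain it is automatic, but not for three or more minors sharing columns). Until you correctly classify the complexes admitting no good $(d+2)$-set $W$ and supply a regularity bound for them, the ``only if'' direction is not closed.
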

\begin{proof}  ``$\Leftarrow$'': If $\Delta$ is the $d$-skeleton of the $(n-1)$-simplex, $J_{\Delta}$ is the ideal of maximal minors of the matrix $X$. This ideal is resolved by the Eagon-Northcott complex \cite{EagonNorthcott}, which is linear. \\
``$\Rightarrow$'': By contradiction, suppose there is a subset $W\subset [n]$ of cardinality $d+2$ such that $\Delta_W$ is not the $d$-skeleton of the $(d+1)$-simplex on $W$. We can re-label the vertices so that $W=\{1,2,\ldots ,d+2\}$ and 
\[\Delta_W=12\ldots (d+1), \ 12\ldots d(d+2), \ \ \ \ldots \ \ \ ,\ 1\ldots i(i+2)(i+3)\ldots (d+2)\]
where $2\leq i\leq d$. With respect to such a labeling $\Delta_W$ is semi-closed. So by Theorem \ref{t:s-c-f}, $\init(J_{\Delta_W})$ is a squarefree monomial ideal for any diagonal term order. Hence, by the work of Conca--Varbaro \cite{CV}, $\mathrm{reg}(S/J_{\Delta_W})=\mathrm{reg}(S/\init(J_{\Delta_W}))$.  But by Lemma \ref{l:poset}
\[J_{\Delta_W}=I_{i}(X_{[1,i]})\cap I_{d+1}(X_{[1,d+2]}),\]
so by Theorem \ref{t:lisa} $\init(J_{\Delta_W})=\init(I_{i}(X_{[1,i]}))\cap \init(I_{d+1}(X_{[1,d+2]}))$. Via Theorem \ref{t:sturmfels}, it is easy to check that the monomial $(x_{d-i+1,1}x_{d-i+2,2}\cdots x_{d,i})(x_{0,2}x_{1,3}\cdots x_{d,d+2})$ is a minimal generator of $\init(I_{i}(X_{[1,i]}))\cap \init(I_{d+1}(X))$. Hence $\init(J_{\Delta_W})$ has a minimal generator of degree $i+d+1$. In particular,
\[ \mathrm{reg}(S/J_{\Delta})\ge \mathrm{reg}(S/J_{\Delta_W})=\mathrm{reg}(\init(S/J_{\Delta_W}))\geq i+d>d.\]
So by Proposition \ref{p:betti}, $\mathrm{reg}(S/J_{\Delta})\geq \mathrm{reg}(S/J_{\Delta_W})>d$. So $J_{\Delta}$ cannot have a linear resolution.
\end{proof}

\subsection{Determinantal facet ideals defined by a Gr\"obner basis}
If $\Delta$ is a closed simplicial complex, it is easy to see that the minors generating $J_{\Delta}$ form a Gr\"obner basis with respect to a diagonal monomial order, corresponding to the labeling that makes $\Delta$ closed: See \cite{EneEtAl}. In \cite{EneEtAl} it has been incorrectly claimed that the converse of the above statement holds true. The following result, which is a consequence of \cite[Corollary 2.4]{Se2}, shows that there are  many other complexes $\Delta$ for which the minors generating $J_{\Delta}$ form a Gr\"obner basis:

\begin{theorem}\label{t:a-c-gb}
Let $\Delta$ be a $d$-dimen\-sional simplicial complex, with a labeling that makes it unit-interval. 
The set $\{[a_0\ldots a_d]:a_0\ldots a_d]\}$ is a Gr\"obner basis of $J_{\Delta}$ with respect to any diagonal term order.  If in addition the field $K$ has positive characteristic, then $S/J_{\Delta}$ is $F$-pure.
\end{theorem}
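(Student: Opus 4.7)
The plan is to deduce both claims from the Knutson-ideal machinery already set up in the proof of Theorem~\ref{t:s-c-f}. Because unit-interval implies semi-closed (Theorem~\ref{thm:Hierarchy}), the $F$-purity assertion is a direct consequence of Theorem~\ref{t:s-c-f}, so only the Gr\"obner basis statement requires fresh work. Since $\init_<([a_0\cdots a_d])=x_{0a_0}x_{1a_1}\cdots x_{da_d}$ for any diagonal term order, the content of the theorem reduces to proving the equality
\[
\init_<(J_{\Delta})\;=\;\bigl(x_{0a_0}x_{1a_1}\cdots x_{da_d}\ :\ a_0a_1\cdots a_d\in\Delta\bigr),
\]
where the inclusion ``$\supseteq$'' is automatic; all the content lies in the reverse inclusion.

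The first concrete step is to decompose $J_{\Delta}$ as a sum of maximal-minor ideals of contiguous blocks of $X$. Call an interval $[i,j]\subseteq\{1,\ldots,n\}$ \emph{covered by $\Delta$} when every $(d+1)$-element subset of $\{i,i+1,\ldots,j\}$ is the vertex set of a facet of $\Delta$; the unit-interval hypothesis means precisely that for every facet $F=a_0\cdots a_d\in\Delta$ the interval $[a_0,a_d]$ is covered. Since a subinterval of a covered interval is itself covered, the family of covered intervals has a finite collection $\mathcal{M}$ of inclusion-maximal elements, and a short double inclusion yields
\[
J_{\Delta}\;=\;\sum_{[i,j]\in\mathcal{M}}I_{d+1}\bigl(X_{[i,j]}\bigr).
\]
Here ``$\supseteq$'' holds because each maximal minor of $X_{[i,j]}$ with $[i,j]$ covered indexes a facet of $\Delta$, while ``$\subseteq$'' holds because every facet $a_0\cdots a_d$ lies in some maximal covered interval containing $[a_0,a_d]$.

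Next I would invoke the result from \cite{Se2} recalled in the proof of Theorem~\ref{t:s-c-f}, namely that $I_{d+1}(X_{[i,j]})\in\C_{\mmu}$ for every $[i,j]$. Since $\C_{\mmu}$ is closed under sums this gives $J_{\Delta}\in\C_{\mmu}$, and Theorem~\ref{t:lisa} upgrades the decomposition above to initial ideals:
\[
\init_<(J_{\Delta})\;=\;\sum_{[i,j]\in\mathcal{M}}\init_<\bigl(I_{d+1}(X_{[i,j]})\bigr).
\]
By Sturmfels's Theorem~\ref{t:sturmfels} each summand on the right is generated by the squarefree monomials $x_{0b_0}x_{1b_1}\cdots x_{db_d}$ with $i\le b_0<\cdots<b_d\le j$; and by covering, each such tuple $b_0\cdots b_d$ is indeed a facet of $\Delta$. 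This yields the desired equality and concludes the proof.

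The only real obstacle is pinning down the combinatorial decomposition $J_{\Delta}=\sum_{[i,j]\in\mathcal{M}}I_{d+1}(X_{[i,j]})$; everything that follows is a routine concatenation of Theorem~\ref{t:lisa}, Theorem~\ref{t:sturmfels}, and the membership result of \cite{Se2}. This decomposition works because unit-interval forces the existence of a covered interval around each facet; if one only assumed ``under-closed'', the corresponding sum on the right-hand side would fail to contain $J_{\Delta}$ and the argument would break already at the first step.
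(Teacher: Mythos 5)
Your proposal is correct and follows essentially the same route as the paper: the decomposition into maximal ``covered intervals'' is exactly the paper's decomposition of $\Delta$ into maximal $d$-skeleta $\Sigma^d_{[i_k,j_k]}$ of simplices on consecutive vertices, and the additivity of initial ideals is obtained from the same Knutson-ideal machinery of \cite{Se2} (the paper cites \cite[Corollary 2.4]{Se2} directly where you combine the membership $I_{d+1}(X_{[i,j]})\in\C_{\mmu}$ with Theorem \ref{t:lisa}). Your derivation of $F$-purity via unit-interval $\Rightarrow$ semi-closed and Theorem \ref{t:s-c-f} is a harmless variant resting on the same underlying fact.
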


\begin{proof}
By definition, $\Delta$ is the union of $d$-skeleta of simplices on consecutive vertices. We can choose these $d$-skeleta to be maximal with respect to inclusion. This yields a decomposition 
\[\Delta \ = \ \Sigma_{[i_1,j_1]}^{d} \: \cup \:  \Sigma_{[i_2,j_2]}^{d} \: \cup \:  \ldots \: \cup \: \Sigma_{[i_l,j_l]}^{d},\]
where $\Sigma_{[i_k,j_k]}^{d}$ denotes the $d$-skeleton of the simplex on vertices $i_k,i_k +1, i_k+2, \ldots, j_k$. Therefore 
\[J_{\Delta} \ = \ I_{d+1}(X_{[i_1,j_1]}) \: + \: I_{d+1}(X_{[i_2,j_2]}) \:  + \:  \ldots \:  + \:  I_{d+1}(X_{[i_l,j_l]}).\] 
So by \cite[Corollary 2.4]{Se2}
\[\init_<(J_{\Delta}) \ = \ \init_< (I_{d+1}(X_{[i_1,j_1]})) \: + \:  \init_< (I_{d+1}(X_{[i_2,j_2]})) \: +\:  \ldots \: +  \: \init_< (I_{d+1}(X_{[i_l,j_l]})).\]
By Theorem \ref{t:sturmfels}, $\{ [a_0, \ldots, a_d] \mid a_0\ldots a_d \in \Delta\}$ is a Gr\"obner basis for $J_\Delta$. Finally, the $F$-purity claim in the case of positive characteristic follows again from \cite[Corollary 2.4]{Se2}.
\end{proof}

\begin{remark}
That the set $\{[a_0\ldots a_d]:a_0\ldots a_d \in \Delta]\}$ is a Gr\"obner basis when $\Delta$ is unit-interval has been independently proved, using a completely different method, in Almousa--Vandebogert \cite[Theorem 2.16]{AV21}. They also obtained the analogous result for $r$-determinantal facet ideals (a more general concept than determinantal facet ideals) of unit-interval simplicial complexes. We were not aware of the paper \cite{AV21} of Almousa and Vandebogert before posting the first version of the present work on the arXiv. (We coordinated efforts to adopt the same name ``unit-interval complexes'' in the two papers.)
For the sake of completeness, we point out that \cite[Corollary 2.4]{Se2} implies that also $r$-determinantal facet ideals of unit-interval simplicial complexes define $F$-pure quotients in positive characteristic. We do not  know, however, whether the ($r$-)determinantal facet ideals of ``lcm-closed'' complexes, as defined in \cite{AV21}, or whether those of ``closed complexes'', as defined here, are all $F$-pure. 
\end{remark}

\begin{remark} \label{rem:closedNotAlmost}
The converse of Theorem \ref{t:a-c-gb} is false:  as explained above, any closed but not unit-interval complex is a counterexample. For a more interesting example, consider
\[ W = 123, \; 124,  \;  134,  \; 234,  \; 235,  \; 245,  \; 345,  \; 568,  \; 789,  \; 8 \, 10 \, 11\]
corresponding to a one-point union of the $B^2$ and the $U^2_3$ of Figure \ref{fig:ToyExamples1}. This complex $W$ is not unit-interval, not closed, and not even weakly-closed \cite{Pavelka}. 
However, one can verify with Macaulay2 \cite{m2} or via \cite[Theorem 2.15]{AV21} that $\{[a_0, a_1, a_2]: a_0\ldots a_d \in \Delta]\}$ form a Gr\"obner basis of $J_{W}$ for any diagonal term order. 
\end{remark}

\begin{remark}\label{rem:correction}
 Two of the results of \cite{EneEtAl} are incorrect because of the following counterexamples. As we already mentioned, the complex $B_d$
of Lemma \ref{lem:Bd} (cf.~Figure \ref{fig:ToyExamples1}) is not closed, but the set of all the minors $[abc]$, where $abc$ ranges over all facets of $B^d$,  is a Gr\"obner basis of $J_{B^d}$ for any diagonal term order by Theorem \ref{t:a-c-gb}. Thus one direction of \cite[Theorem 1.1]{EneEtAl}  is incorrect for all $d>1$. Moreover,  the graph $G_0 = 12, 13, 23, 24, 34$
is closed, but one can verify that $S/J_{G_0}$ is not Cohen-Macaulay. Thus \cite[Corollary 1.3]{EneEtAl} is incorrect already for $d=1$. 
\end{remark}

The final part of our work is dedicated to the delicate quest for some partial converse for Theorem \ref{t:a-c-gb}. 
To increase the chances of success, we restrict ourselves to \emph{traceable} complexes. The traceable assumption is rather natural in this case, 
as we have anyway seen in Theorem \ref{thm:CTSC} that all strongly-connected unit-interval complexes are traceable. 
We start off with a Lemma:

\begin{lemma}\label{l:gb-ac}Let $\Delta$ be a simplicial complex such that  $\mathcal{GB} \eqdef \left\lbrace [a_0,\ldots,a_d] \mid a_0\ldots a_d \in \Delta \right\rbrace$ is a Gr\"obner basis of $J_{\Delta}$ for some diagonal term order. Let $F= a_0\ldots a_d$ and $G=b_0 \ldots b_d$ be two facets of $\Delta$. If for some integer $l \in \{0,\ldots d-1\}$ 
\begin{compactitem}
\item[\rm (i)] $a_i=b_i$ for all $i \in \{0,\ldots, l\}$,
\item[\rm (ii)]$a_{l+1}> a_l +1$,
\item[\rm (iii)] $b_{l+k}= b_l +k$ for all $k\geq 1$,
\end{compactitem}
then the facet $a_0 \ldots a_{l-1}(a_l +1) \ a_{l+1} \ldots a_d$ is also in $\Delta$.
Symmetrically, if for some $l \in \{1,\ldots d\}$
\begin{compactitem}
\item[\rm (iv)] $a_i=b_i$ for all $i \in \{l,\ldots, d\}$,
\item[\rm (v)]$a_{l-1}< a_l -1$,
\item[\rm (vi)] $b_{l-k}= b_l -k$ for all $k \in \{1, \ldots, l\}$, 
\end{compactitem}
then the facet $a_0 \ldots a_{l-1} (a_l -1) \ a_{l+1} \ldots a_d$ is also in $\Delta$.
\end{lemma}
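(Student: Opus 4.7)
Since the second assertion follows from the first by applying the latter to $\Delta$ with the labels reversed ($v \mapsto n+1-v$), I focus on the first, setting $H \eqdef a_0\, a_1\cdots a_{l-1}\,(a_l+1)\, a_{l+1}\cdots a_d$. My plan is to produce a polynomial $p \in J_\Delta$ whose initial monomial, under the chosen diagonal term order, is divisible by $\init[K]$ for exactly one valid $(d+1)$-subset $K \subseteq \{1,\dots,n\}$, namely $K = H$; by the Gr\"obner basis hypothesis, this $K$ must lie in $\Delta$, yielding $H \in \Delta$. Explicitly, consider the two $(d-l) \times (d-l)$ minors of $X$
\[
A \eqdef [\,(l{+}1)(l{+}2)\cdots d \mid (a_l{+}1)(a_l{+}2)\cdots(a_l{+}d{-}l)\,], \quad
B \eqdef [\,(l{+}1)(l{+}2)\cdots d \mid a_{l+1}\, a_{l+2}\cdots a_d\,],
\]
and set $p \eqdef A \cdot [F] - B \cdot [G]$. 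Since $[F],[G] \in J_\Delta$, we have $p \in J_\Delta$; moreover, $\init(A)\cdot[F] - \init(B)\cdot[G]$ is precisely the S-polynomial $S([F],[G])$, so $p = S([F],[G]) + (A-\init A)\cdot[F] - (B-\init B)\cdot[G]$ is a canonical modification of the S-polynomial by ``lower-order'' multiples of $[F]$ and $[G]$ themselves.

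The central technical point is the formula
\[
\init(p) \;=\; \init[H] \;\cdot\; x_{l+1,\,a_l} \;\cdot\; \prod_{k=2}^{d-l} x_{l+k,\,a_l+k}.
\]
The leading monomials of $A\cdot[F]$ and $B\cdot[G]$ (both equal to $\operatorname{lcm}(\init[F],\init[G])$) cancel. The distinguished pair $(\sigma,\tau) = (\mathrm{id},(l,l{+}1))$ in the double permutation expansion of $B\cdot[G]$ contributes precisely the displayed monomial: here $G_\tau$ swaps $b_l=a_l$ with $b_{l+1}=a_l+1$, and $B_{\mathrm{id}}$ supplies the diagonal $x_{l+1,a_{l+1}}\cdots x_{d,a_d}$. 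No pair in $A\cdot[F]$ can produce this monomial, because the row-$l$ factor $x_{l,a_l+1}$ is not available there ($A$ has no row-$l$ entry and $a_l+1 \notin F$ since $a_{l+1}>a_l+1$). All ``larger-than-displayed'' contributions cancel between $A\cdot[F]$ and $B\cdot[G]$: for each $c \in F \cap G$ (in particular each $c=a_i$ with $i<l$), the $x_{0,c}$-parts of $A[F]$ and $B[G]$ match via the cofactor expansions along row $0$, since the relevant complementary minors of $F$ and $G$ pair up through the parallel roles of $A$ and $B$; analogous matchings dispose of the next transposition layers. This bookkeeping is routine but somewhat laborious, and can alternatively be derived via a Laplace-type identity for the $(d{+}1)\times (d{+}2)$ augmented matrix with column set $(a_0,\ldots,a_l,a_l+1,a_{l+1},\ldots,a_d)$, whose maximal minors include both $[F]$ and $[H]$.

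Granted the form of $\init(p)$, the conclusion follows from a row-by-row analysis of the possible divisors $\init[K] = x_{0,k_0}\cdots x_{d,k_d}$ (with $k_0<\cdots<k_d$) of $\init(p)$. In rows $i<l$ only $x_{i,a_i}$ is present, forcing $k_i=a_i$. In row $l$ only $x_{l,a_l+1}$ is present, forcing $k_l=a_l+1$. In row $l+1$ the options are $\{a_l,a_{l+1}\}$, and the inequality $k_{l+1}>a_l+1$ excludes $a_l$, giving $k_{l+1}=a_{l+1}$. For $k\ge 2$, the options in row $l+k$ are $\{a_l+k,\,a_{l+k}\}$, but the chain $a_{l+k-1}\ge a_l+k$ (iterating $a_{l+1}\ge a_l+2$) combined with $k_{l+k}>a_{l+k-1}$ excludes $a_l+k$ and yields $k_{l+k}=a_{l+k}$. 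Hence $K=(a_0,\ldots,a_{l-1},\,a_l+1,\,a_{l+1},\ldots,a_d)=H$, so $H\in\Delta$. The main obstacle is the explicit verification of $\init(p)$ in full generality; once this has been established, the row-by-row uniqueness argument is the clean combinatorial heart of the proof.
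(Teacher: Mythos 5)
Your proposal follows essentially the same route as the paper: your $p = A\cdot[F] - B\cdot[G]$ is, up to sign, exactly the polynomial $f$ used there, your claimed $\init(p)$ agrees with the paper's computed initial term, and your row-by-row identification of $[H]$ as the unique element of $\mathcal{GB}$ whose initial term divides $\init(p)$ is the paper's argument verbatim (the paper treats the second half with an explicit symmetric polynomial $f'$ rather than by reversing labels, an immaterial difference). The one step you leave sketched --- the verification of $\init(p)$ --- is organized in the paper by Laplace-expanding along the first $l+1$ rows to write $B\cdot[G]=h+g_1$ and $A\cdot[F]=h+g_2$ with $h=x_{0,b_0}\cdots x_{l,b_l}\,A\,B$, observing that every monomial of $g_1$ and $g_2$ is smaller than every monomial of $h$, and then checking $\init(g_2)<\init(g_1)$, so that $\init(p)=\init(g_1)$.
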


\begin{proof} 
It is harmless to assume that the term order $<$ is the lexicographic term order defined before, cf.~Theorem \ref{t:sturmfels}.
Let $F$ and $G$ be two facets of $\Delta$ satisfying (i), (ii) and (iii). Let us compute the initial term of the polynomial \[f\eqdef  [l+1 \ldots d \mid a_{l+1} \ldots a_d][b_0 \ldots b_d]- [l+1 \ldots d \mid b_{l+1} \ldots b_d][a_0 \ldots a_d].\] 

If we set 
\begin{equation*}
\begin{split}
p&\eqdef [a_0 \ldots a_d], \ \ p'\eqdef [l+1 \ldots d \mid a_{l+1} \ldots a_d]\\
q&\eqdef [b_0 \ldots b_d], \ \ \ q' \eqdef [l+1 \ldots d \mid b_{l+1} \ldots b_d]
\end{split}
\end{equation*}

then $f=p'q-pq'$, and by Laplace expansion we have
\begin{equation*}
\begin{split}
p'q&=\underbrace{(x_{0 b_0}\cdots x_{l-1 b_{l-1} } x_{l b_{l}} p'q')}_{h}+ g_1, \quad \mu_1 < \alpha \ \forall \ \mu_1 \in \supp (g_1), \  \forall \ \alpha \in \supp (h), \\
pq'&=\underbrace{(x_{0 a_0}\cdots x_{l-1 a_{l-1} } x_{l a_{l}} p'q')}_{h}+ g_2,\quad \mu_2 < \alpha \ \forall \ \mu_2 \in \supp (g_2), \  \forall \ \alpha \in \supp (h).\\
\end{split}
\end{equation*}
Furthermore
\begin{equation*}
\begin{split}
\init_< (g_1)=( x_{l+1 a_{l+1}} \cdots x_{d  a_d})(x_{0 b_0}\cdots x_{l-1 b_{l-1} } x_{l b_{l+1}} x_{l+1 b_l} x_{l+2 b_{l+2}} \ldots x_{d b_d}),\\
\init_<(g_2)=( x_{l+1 b_{l+1}} \cdots x_{d  b_d})(x_{0 a_0}\cdots x_{l-1 a_{l-1} } x_{l a_{l+1}} x_{l+1 a_l} x_{l+2 a_{l+2}} \ldots x_{d a_d}).
\end{split}
\end{equation*}
Since $\init_<(g_2)$ is smaller than $\init_<(g_1)$, we conclude that\[\init_< (f)=\init_< (g_1-g_2)=( x_{l+1 a_{l+1}} \cdots x_{d  a_d})(x_{0 b_0}\cdots x_{l-1 b_{l-1} } x_{l b_{l+1}} x_{l+1 b_l} x_{l+2 b_{l+2}} \ldots x_{d b_d}).\]
In addition $f \in J_\Delta$ because $F, G \in \Delta $. Thus, there must be a minor $g=[c_0 \ldots c_d]$ in $\mathcal{GB}$ such that $\init_<(g)$ divides $\init_<(f)$. Note that for $c_0, \ldots,c_l$ we only have one option, namely,
\begin{equation*}
\left\{
\begin{array}{ll}
c_0 &= b_0=a_0\\
& \ \vdots \\
 c_{l-1}&=b_{l-1}=a_{l-1}\\
c_l&=b_{l+1}=b_l+1=a_l+1.
\end{array} \right.
\end{equation*}
For $c_{l+1}$ we have a priori two possibilities: either $c_{l+1}= b_l$ or  $c_{l+1}=a_{l+1}$. But $b_l < b_{l+1}=c_l$, so it must be $c_{l+1}=a_{l+1}$. Similarly, for $c_{l+2}$ we have a priori  two options: Either $c_{l+2}= b_{l+2}$, or  $c_{l+2}=a_{l+2}$. But by the assumptions, we have that $b_{l+2}\leq a_{l+1}=c_{l+1}$, so since $c_{l+2}>c_{l+1}$ it must be $c_{l+2}=a_{l+2}$. 
In general, for any $k\geq 2$ we have $b_{l+k}\leq a_{l+k-1}=c_{l+k-1}$. Since $c_{i}>c_{i-1}$, arguing recursively we obtain that the only possible option is $c_{l+k}=a_{l+k}$ for all $k\geq 2$.
Hence we have proved that
\[g=[c_0,\ldots,c_d]=[a_0\ldots a_{l-1} (a_l+1) a_{l+1} \ldots a_d].\]
Since $g$ is an element of $\mathcal{GB}$, we conclude that  $a_0\ldots a_{l-1} (a_l+1) a_{l+1} \ldots a_d \in \Delta$.

The proof of the second part of the lemma is symmetric; namely, one considers the polynomial  
\[f' \eqdef  [0 \ldots l-1 \mid a_{0} \ldots a_{l-1}][b_0 \ldots b_d]- [0 \ldots l-1 \mid b_{0} \ldots b_{l-1}][a_0 \ldots a_d] \in J_{\Delta}\] whose leading term is 
\[\init_< (f')=( x_{0 a_{0}} \cdots x_{l-1  a_{l-1}})(x_{0 b_0}\cdots x_{l-2 b_{l-2} } x_{l-1 b_{l}} x_{l b_{l-1}} x_{l+1 b_{l+1}} \ldots x_{d b_d}),\]
and one proceeds analogously to the argument above.
\end{proof}

\begin{theorem}\label{t:a-c-gb1}
Let $\Delta$ be a $d$-dimensional simplicial complex. If with respect to the same labeling $\Delta$ is traceable and the set $\{[a_0\ldots a_d]:a_0\ldots a_d]\}$ is a Gr\"obner basis of $J_{\Delta}$ with respect to some diagonal term order, then such labeling makes $\Delta$ unit-interval.
\end{theorem}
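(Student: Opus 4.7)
The plan is to prove by strong induction on $r := a_d - a_0$ that for every facet $F = a_0 \cdots a_d$ of $\Delta$, the complex $\Delta$ contains the full $d$-skeleton on the vertex interval $[a_0, a_d]$. The base case $r = d$ is immediate, since then $F = H_{a_0}$ is the unique such face.

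For the inductive step with $r > d$, I would first produce two auxiliary facets of strictly smaller range. Let $l^* \in \{0, \ldots, d-1\}$ be the smallest index with $a_{l^* + 1} > a_{l^*} + 1$, which exists because $r > d$. Applying Lemma~\ref{l:gb-ac}(i) successively at $l = l^*, l^* - 1, \ldots, 0$, each time with witness $H_{a_0}$ (in $\Delta$ by traceability), yields after $l^* + 1$ steps a facet $F_A \in \Delta$ with $\min F_A = a_0 + 1$ and $\max F_A = a_d$: at each step, the gap hypothesis and the requirement that $H_{a_0}$ be the ``consecutive-tail'' witness are both satisfied by direct inspection of the running facet (the ``gap'' simply propagates one position to the left). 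A mirror argument with Lemma~\ref{l:gb-ac}(ii) and witness $H_{a_d - d}$ produces $F_B \in \Delta$ with $\min F_B = a_0$ and $\max F_B = a_d - 1$. Since both $F_A$ and $F_B$ have range $r - 1 < r$, the induction hypothesis applied to them yields that every $d$-face with vertex set contained in $[a_0 + 1, a_d]$ or in $[a_0, a_d - 1]$ belongs to $\Delta$.

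The main obstacle will be handling the remaining ``diagonal'' faces $G = a_0\, c_1 \cdots c_{d-1}\, a_d$ containing both endpoints $a_0$ and $a_d$. The plan is to travel from $F$ to $G$ by \emph{middle shifts} -- applications of Lemma~\ref{l:gb-ac}(i) or (ii) at positions $l \in \{1, \ldots, d-1\}$ -- because such shifts preserve both the first and the last vertex of a facet. The graph whose nodes are strictly increasing $(d-1)$-tuples in the open interval $(a_0, a_d)$ and whose edges are single-coordinate $\pm 1$ moves respecting strict monotonicity is connected (every tuple can be greedily reduced to the minimal one $(a_0 + 1, \ldots, a_0 + d - 1)$), so some sequence of such moves links $(a_1, \ldots, a_{d-1})$ to $(c_1, \ldots, c_{d-1})$. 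The crucial algebraic input is that when the gap hypothesis of Lemma~\ref{l:gb-ac}(i) (resp.~(ii)) holds at a middle position $l$, the inequality forces the witness's last vertex $b_l + d - l$ to be $\le a_d - 1$ (resp.~its first vertex $b_l - l$ to be $\ge a_0 + 1$), so the witness automatically lies in $\Delta$ by the content of the previous paragraph. Consequently each middle shift along the connecting path is executable inside $\Delta$, yielding a chain of facets of $\Delta$ ending at $G$. The delicate part of the write-up will be to rigorously verify this two-part statement: that the shift graph is connected through \emph{applicable} moves, and that every applicable middle shift has its witness guaranteed by the induction hypothesis.
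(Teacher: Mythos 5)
Your proposal is correct and follows essentially the same route as the paper: the same induction on the range/gap of $F$, the same repeated applications of Lemma~\ref{l:gb-ac} with witnesses $H_{a_0}$ and $H_{a_d-d}$ to obtain $\Sigma^{d}_{[a_0+1,a_d]}\subseteq\Delta$ and $\Sigma^{d}_{[a_0,a_d-1]}\subseteq\Delta$, and the same endpoint-preserving middle shifts (whose witnesses lie in those two sub-skeleta) to reach the faces $a_0 c_1 \cdots c_{d-1} a_d$. The only difference is organizational: where the paper first ``flattens to the right'' and then decreases the middle entries position by position, you package the same moves as a connectivity argument on the graph of middle tuples, which is a clean and equally valid way to finish.
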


\begin{proof} 
Let $F=a_0 \ldots a_d$ be a facet of $\Delta$ with $\operatorname{gap} (F)=k$. We proceed  by induction on $k$. For $k=0$ there is nothing to prove, so we assume $k>0$. 
Let $g_1, \ldots, g_k$ be the vertices not in $F$, and such that $a_0<g_1< \ldots <g_k<a_d$. We want to show that $\Delta$ contains the $d$-skeleton of $\{a_0, \ldots,a_d,g_1,\ldots,g_k\}$. The strategy is to first show that $\Sigma_{[a_0+1,a_d]}^{d}, \Sigma_{[a_0,a_d-1]}^{d} \subseteq \Delta$ by inductive assumption, and then to prove that $\Delta$ contains also the facets of the form $a_0 c_1 \ldots c_{d-1} a_d$. So let us proceed. Let $l$ be the greatest integer such that $a_l <g_1$, so that $g_1=a_l+1$. Consider the two facets $F$ and $H_{a_0}$ of $\Delta$.
They satisfy the assumptions of Lemma \ref{l:gb-ac}, so 
\[F'=a_0 \ldots a_{l-1} \ g_1 \ a_{l+1} \ldots a_d \in \Delta.\]
If $l=0$, then $\operatorname{gap}(F')=k-1$, so by the inductive assumption $\Sigma_{[g_1,a_d]}^{d}=\Sigma_{[a_0+1,a_d]}^{d}\subset \Delta$. Otherwise, since $\operatorname{gap} (F')=k$, we cannot apply the inductive assumption yet. However, we have \lq \lq shifted" the first gap to the left and now the first missing vertex is $a_l=a_{l-1}+1$. We can apply again Lemma \ref{l:gb-ac} to the facets $F'$ and $H_{a_0}$ and we get
\[F''=a_0 \ldots a_{l-2} \ a_l \ g_1 \ a_{l+1} \ldots a_d \in \Delta.\]
If $l=1$, then $\operatorname{gap}(F'')=k-1$, so by the inductive assumption $\Sigma_{[a_1,a_d]}^{d}=\Sigma_{[a_0+1,a_d]}^{d}\subset \Delta$. Otherwise, once again $\operatorname{gap} (F'')=k$ and the first missing vertex $a_{l-1}=a_{l-2}+1$ has been shifted by one to the left. 
Iterating this procedure, we eventually get that \[(a_{0}+1)\ldots a_{l}\  g_1 \ a_{l+1} \ldots a_d \in \Delta.\]
This face has gap equal to $k-1$ and we can finally apply induction: We get $\Sigma_{[a_0+1,a_d]}^{d} \subseteq \Delta$.\par 
To prove that $\Sigma_{[a_0,a_d-1]}^{d} \subseteq \Delta$ we  use a similar argument. Let $l$ be the smaller integer such that $g_k<a_l$, so that $g_k=a_l-1$, and consider the two facets of $\Delta$
\begin{equation*}
\begin{split}
F&=\ \ \ a_0 \ldots \ a_{l-1}\ a_l\ldots a_d\\
H^{a_d} \eqdef H_{a_d-d}&=(a_d-d)(a_d-d+1) \ldots g_k \ a_l\ldots \  a_d.
\end{split}
\end{equation*}
Iteratively applying the second part of Lemma \ref{l:gb-ac}, we can shift the last missing vertex to the right until we reach the facet
\[a_0 \ldots a_{l-1} g_k a_l \ldots a_d -1 \in \Delta,\]
which has gap $k-1$. So by induction $\Sigma_{[a_0,a_d-1]}^{d} \subseteq \Delta$.\par
It remains to prove that all the facets of the form $G=a_0 c_1 \ldots c_{d-1} a_d $ are in $\Delta$. To do so, we start from $F=a_0 a_1 \ldots a_d$ and we replace one by one each $a_i$ with the corresponding $c_i$. In detail: For $i=1$, we have three possibilities:
\begin{compactitem}
\item $c_1=a_1$, or
\item $a_0< c_1 < a_1$, or
\item $c_1 > a_1$.
\end{compactitem}
If $c_1=a_1$ there is nothing to do. If $a_0< c_1 < a_1$, consider the two facets 
\begin{equation*}
\begin{split}
F&= \ \ \  a_0 \ \  \ a_1 \ldots a_d\\
\widetilde{F}&=(a_1-1) \ a_1 \ldots  a_d.
\end{split}
\end{equation*}
Since $a_0< c_1 < a_1$, we have that  $a_1-1 >  a_0$. Hence $\widetilde{F}  \in \Sigma_{[a_0+1 ,a_d]}^{d} \subseteq \Delta$. So by Lemma \ref{l:gb-ac} $a_0  (a_1-1 ) a_2 \ldots a_d \in \Delta$. If $c_1=a_1-1$ we stop,  otherwise we repeat the same argument. At each iteration of Lemma \ref{l:gb-ac}, the second vertex in the facet decreases by one unit; eventually, we obtain that $a_0 c_1 a_2 \ldots a_d \in \Delta$.

As for the third possibility ($c_1 > a_1$), we claim that we can simply dismiss it without loss of generality. 
In fact, for every  $i\in \{1,\ldots,d\}$, we can always  ``flatten all the vertices after $a_{i-1}$ to the right'': that is, we can always replace  $F$ with another face in $\Delta$ of the form
\[F_i=a_0 a_1\ldots a_{i-1} (a_d-d-i) \ldots (a_d-1) a_d.\] 
To see it, let $0\leq l\leq d-1$ be the largest index for which $a_l+1<a_{l+1}$ (such an $l$ must exist because $\operatorname{gap} (F)>0$). Applying Lemma \ref{l:gb-ac} to the facets $F$ and $\widetilde{F}=a_0\ldots a_{l}(a_{l}+1)(a_l+2)\ldots a_l+(d-l)$ in  $\Sigma_{[a_0,a_d-1]}^{d}\subseteq \Delta$, we get that the facet $a_0 \ldots a_{l-1}(a_l+1)a_{l+1}\ldots a_d$ is in $\Delta$. Proceeding this way we end up with the face 
\[F_l=a_0 \ldots a_{l-1}(a_{l+1}-1)a_{l+1}\ldots a_d\in \Delta.\]
Replacing $F$ with $F_{l}$, and arguing the same way, we infer that $F_i\in\Delta$ for all $i=0,\ldots ,d-1$.
In particular, for $i=1$, we could replace $F$ with a face with same minimum and maximum \[ F_1=a_0 \  (a_d-d+1) \ (a_d-d+2)  \ldots \  a_d\in \Delta.\] Note that $c_1 \le a_d-d+1$. So our claim is proven: Up to replacing $F$ with $F_1$, we can assume that $c_1 \le a_1$.
 
So the case $i=1$ is settled. Consider now $i=2$.  If $c_2=a_2$, there is nothing to do. Otherwise, flattening the vertices after $c_1$ of $a_0c_1a_2\ldots a_d$ to the right, we may assume that $c_2 < a_2$. Consider the two facets
\begin{equation*}
\begin{split}
F &= \ \ \  a_0 \ \ \ \  \ c_1 \ \ \ \ a_2 \ldots a_d \in \Delta\\
\widetilde{F} &=(a_2-2) \ (a_2-1) \ a_2\ldots  a_d \in \Delta.
\end{split}
\end{equation*} 
Since $c_2 < a_2$, we have that $c_1 < a_2-1$, so applying Lemma \ref{l:gb-ac} we obtain that \[a_0 \, c_1 \, (a_2-1) \, a_3 \ldots a_d \in \Delta.\]
If $c_2=a_2-1$ we stop, otherwise we repeat the same argument. At every iteration of Lemma \ref{l:gb-ac}, the third vertex in the facet decreases by one unit; eventually, we obtain that $a_0 c_1 c_2 a_3 \ldots a_d \in \Delta.$

Iterating this procedure for all $i$'s, we conclude that 
\[G=a_0 c_1 c_2 \ldots c_{d-1} a_d \in \Delta. \qedhere\]
\end{proof}

\begin{remark} \label{rem:lcm} Very recently  Almousa and Vandebogert   \cite{AV21} introduced a technical property of simplicial complexes, called ``lcm-closed'', that simultaneously generalizes the two properties of being ``closed'' and being ``unit-interval''. They asked \cite[Question 2.19]{AV21} whether such property for simplicial complexes would characterize the fact that the minors of the determinantal facet ideal form a Gr\"obner basis with respect to any diagonal term order. With a little ingenuity, one can see that for traceable complexes, ``lcm-closed'' is simply equivalent to ``unit-interval''. Thus  Theorem \ref{t:a-c-gb1} answers Almousa--Vandebogert's question positively, for complexes that with respect to the same labeling are traceable.
\end{remark}


\end{document}